\newtheorem{thm}{Theorem}
\newtheorem{cor}[thm]{Corollary}
\newtheorem{prop}[thm]{Proposition}
\theoremstyle{definition}
\newtheorem{axiom}{Axiom}
\newtheorem{defn}{Definition}
\newtheorem{exampl}{Example}
\newtheorem*{rem}{Remark}
\theoremstyle{remark}
\title{Universal spaces of parameters for complex Grassmann manifolds $G_{q+1,2}$.}
\author[1]{Nikita Klemyatin}
\affil{Skoltech and Higher School of Economics}
\date{}
\begin{document}

\fontsize{13}{16pt}\selectfont
\renewcommand{\proofname}{$\bigtriangleup$}
\newcommand{\Natural}{\mathbb N}
\newcommand{\Integer}{\mathbb{Z}}
\newcommand{\Real}{\mathbb R}
\newcommand{\Complex}{\mathbb C}
\newcommand{\cp}{\Complex P}
\newcommand{\pf}{(\cp^1)^5}
\newcommand{\ipf}{\cp^1)^5 \setminus \Delta}
\newcommand{\pgl}{PGL(2, \Complex)}
\newcommand{\moduli}{\overline{M}_{0,n} }	
\newcommand{\target}{(\cp^1)^N}	
\newcommand{\grass}{G_{q+1,k}}
\newcommand{\gtwo}{G_{q+1,2}}
\newcommand{\gln}{GL(n; \Complex)}
\newcommand*{\Hom}{\mathrm{Hom}\kern -.5pt }
\newcommand{\matkn}{M_{q+1,k}(\Complex)}
\newcommand{\matknm}{M^{max}_{q+1,k}(\Complex)}
\newcommand{\hprsmplx}{\Delta_{q+1,2}}

\maketitle

\begin{abstract}
	\normalsize
	In \cite{BT1} and \cite{BT2} Buchstaber and Terzic introduced a notion of universal space of parameters $\mathcal{F}$ for a manifold $M^{2n}$, which has an effective action of compact torus $T^k$ , $k \leq n$ with some additional properties. with special properties. This space is needed to construction of factor $M^{2n}/T^k$. Buchstaber and Terzic constructed the universal space of parameters for $G_{5,2}$ in \cite{BT1}. In this work we construct universal space of parameters for complex Grassmann manifold $\gtwo$. Our construction is based on the construction of moduli space of stable curves of genus zero with $q+1$ marked points due to Salamon, McDuff and Hofer.  
\end{abstract}

\tableofcontents

\section{Introduction.}

The complex Grassmann manifolds are fundamental objects in various branches of mathematics, such as differential and algebraic geometry, algebraic topology, representation theory. These manifolds are important either as examples due to their simple -- and at the same time -- rich geometry or as classifying spaces in  algebraic topology. Also they are important examples of spherical varieties in representation theory.  

Since the complex Grassmann manifolds are homogeneous spaces, they posseses an action of torus. Studying this action is important in symplectic geometry and toric topology. The canonical action of torus on $G_{q+1,k}$ is one of the simplest example of action of positive complexity. In this case, studying the orbit space and moment map is harder, because additional parameters appear. For example, in the case of $G_{q+1,2}$ the complexity is equal to $\dim_\Complex G_{q+1,2} - q = 2q- q -2 = q-2$.

However, until quite recently, the question about the factor $G_{q+1,k}/T^q$ could not be solved due to abscenсe of methods for description of topology structure of it. In order to describe $G_{q+1,2}/T^q$, Buchstaber and Terzic in \cite{BT1} and \cite{BT2} introduced notions of $(2n,k)$-manifold and virtual spaces of parameters. They also proved that  $G_{4,2}$ and $G_{5,2}$ are $(8;3)$ and $(12;4)$ manifolds in the sense of this definition. Furthermore, they are proved that all $G_{q+1,2}$ for $q \geq 5$  satisfy almost all axioms of $(2n,k)$-manifolds except the last one. The last axiom says that there is a compact manifold, which consists of all virtual spaces of parameters and this compact manifold is a compactification of space of parameters for the main stratum.

In this work we prove that for each $G_{q+1,2}$ the manifold, so-called “Chow factor” $G_{q+1,2} // (\Complex^*)^q$ is a universal space of parameters. As a corollary, we obtain, that for each $q$ the Grassmann manifold $G_{q+1,2}$ is $(4(q-1); q)$ manifold in the sense of Buchstaber and Terzic.

I am very grateful to Victor Buchstaber for turning my mind into this problem and fruitful discussions.
I am also very grateful to Anton Ayzenberg, Vladislav Cherepanov for valuable discussions. I also very grateful to Alexei Rukhovich and Sergey Khakhalov for helping with this preprint.

\section{Torus action on $\gtwo$, equivariant stratification and spaces of parameters of strata.}

\subsection{Action of $T^{q+1}$ and $(\Complex^*)^{q+1}$ on $\gtwo$.}

\begin{defn}
	A \emph{Grassmann manifold} (or Grassmannian) $\gtwo$ is a space which parametrizes all 2-dimensional linear subspaces of the (q+1)-dimensional vector space $\Complex^{q+1}$.
\end{defn}

Any element $L \in \gtwo$ may be represented by matrix $A_L$, which columns forms a basis in $L$:

	\[
A_L =
\begin{bmatrix}
a_1  & b_{12}   \\
a_2  & b_2 	  \\
\vdots & \vdots \\
a_n & b_n \\
\end{bmatrix}.		
\]

This matrix is not unique, but any other such matrix $B_L$ may be obtained from $A_L$ by right action of $GL(2, \Complex)$.

Let $P_{ij}= a_i b_j - a_j b_i$.

\begin{defn}
	$P_{ij}$ is called a \emph{Plucker coordinate}. 
\end{defn}

\begin{prop}
	\begin{enumerate}
	\item All Plucker coordinates are defined up to common nonzero factor. 
	
	\item All Plucker coordinates define an embedding of $\gtwo$ into $cp^{\binom{q+1}{2} - 1}$;
	
	\item Plucker coordinates are satisfy the \emph{Plucker relations}:
	
	$$P_{ij}P_{kl} - P_{ik}P_{jl} + P_{jk}P_{il}=0. $$ 
	\end{enumerate}
\end{prop}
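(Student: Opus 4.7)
The plan is to prove the three assertions in turn; parts (1) and (3) are short computational checks, while part (2) needs slightly more structure. For part (1), observe that any two matrix representatives of $L\in\gtwo$ differ by right multiplication by some $g\in GL(2,\Complex)$. Since $P_{ij}$ is the $2\times 2$ minor on rows $i,j$, a direct computation (or the Cauchy--Binet formula) shows that the corresponding minor of $A_L\cdot g$ equals $\det(g)\cdot P_{ij}$. The factor $\det(g)\neq 0$ is independent of $(i,j)$, so all Plücker coordinates rescale uniformly, and $[P_{ij}]\in\cp^{\binom{q+1}{2}-1}$ is well-defined.

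For part (2), part (1) already supplies a well-defined map $\varphi\colon\gtwo\to\cp^{\binom{q+1}{2}-1}$. I would verify (a) injectivity and (b) injectivity of $d\varphi$ everywhere. For (a), given $[P_{ij}]$ choose $(i_0,j_0)$ with $P_{i_0j_0}\neq 0$; after rescaling $A_L$ so that its $2\times 2$ submatrix on rows $i_0,j_0$ is the identity, the remaining entries of $A_L$ are, up to sign, the affine Plücker coordinates $P_{ij}/P_{i_0j_0}$, so $L$ is fully recovered. For (b), on the standard affine chart $U_{i_0j_0}=\{P_{i_0j_0}\neq 0\}$ of $\gtwo$ the $2(q-1)$ entries of the reduced matrix appear (up to sign) directly among the affine Plücker coordinates, so $d\varphi$ is injective on each chart. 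Since $\gtwo$ is compact and the target is Hausdorff, an injective immersion is automatically an embedding.

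For part (3), I would deduce the Plücker relations from a vanishing $4\times 4$ determinant. For any $i<j<k<l$, the matrix
\[
M \;=\; \begin{pmatrix} a_i & b_i & a_i & b_i \\ a_j & b_j & a_j & b_j \\ a_k & b_k & a_k & b_k \\ a_l & b_l & a_l & b_l \end{pmatrix}
\]
has two pairs of identical columns, hence $\det M=0$. The generalized Laplace expansion of $\det M$ by splitting columns $\{1,2\}$ versus $\{3,4\}$ expresses it as a signed sum $\sum \pm P_{rs}P_{r's'}$ over the three partitions $\{r,s\}\sqcup\{r',s'\}=\{i,j,k,l\}$; after collecting signs this is precisely $P_{ij}P_{kl}-P_{ik}P_{jl}+P_{jk}P_{il}=0$. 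The only mildly delicate aspect of the whole proof is keeping the signs consistent in parts (2) and (3), but this is routine Laplace bookkeeping rather than a conceptual obstacle.
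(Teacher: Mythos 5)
Your proof is correct; the paper itself gives no argument for this proposition and simply refers the reader to Griffiths--Harris and Fulton, and what you have written is essentially the standard proof found there (Cauchy--Binet for the $GL(2,\Complex)$-equivariance of the minors, normalization of the $2\times 2$ block on rows $i_0,j_0$ for injectivity and the immersion property, and a degenerate $4\times 4$ determinant for the Pl\"ucker relation). One cosmetic remark: the Laplace expansion of your matrix $M$ along columns $\{1,2\}$ has six terms, with each unordered partition $\{r,s\}\sqcup\{r',s'\}$ of $\{i,j,k,l\}$ occurring twice with the same sign, so it produces \emph{twice} the three-term relation --- harmless over $\Complex$, but worth stating explicitly when you ``collect signs.''
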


For the proof see \cite{GH} or \cite{Fltn}.

This is a classical result, that Grassman manifold $\gtwo$ of two-planes in $\Complex^{q+1}$ may be viewed as homogeneous space of groups $U(n)$ and $GL(q+1, \Complex)$. Really, both groups acts transitivilly on the set of 2-planes in $\Complex^n$. In the case of $U(n)$ the stabilizer of $span_\Complex\{e_1, e_2 \}$ is equal to $U(2) \times U(n-2)$ (here $e_1, e_2$ are vectors from standard basis of $\Complex^n$). So, we have $$\gtwo = U(q+1) /U(2) \times U(q-1).$$
From this point of view one can see that the torus $T^{q+1} \subset U(q)$ acts on $\gtwo$. This action is an example of Hamiltonian action from symplectic geometry. The easiest way to see it is follows: as we mentioned before, there is the Plucker embedding $\Phi:\gtwo \rightarrow \cp^{\binom{q+1}{2} - 1}$ is $T^{q+1}$-equivariant. Hence, the pull-back $\Phi^* \omega_{FS}$ of Fubini-Studi form is Kahler form on $\gtwo$. It is not hard to show that the action of $T^{q+1}$ is actually Hamiltonian. Moreover, one can show, that this metric is also $U(q+1)$-invariant.

There is a $T^{q+1}$-equivariant map $\mu: \gtwo \rightarrow \Delta_{q+1,2} = \{x \in \Real^n ~|~ 0 \leq x_i \leq 1, ~ x_1 + \dots + x_n = 2  \}$.
We are going to write the formula for $\mu$. Denote by $e_i$ standard basis of $\Real^{q+1}$. Let $e_{ij} := e_i + e_j$ and $P(L):=\sum_{i < j} {\lvert P_{ij} \rvert}^2$. In these definitions the formula for $\mu$ is very simple:

$$\mu(L) = \frac{1}{P(L)} \sum {\lvert P_{ij} \rvert}^2 e_{ij}. $$

It's not hard to see that the map is $T^n$-equivariant and maps $\gtwo$ onto $\Delta_{q+1,2}$. One could show that $\mu$ is moment map for Kahler metric $\Phi^* \omega_{FS}$, which was mentioned above.

There is another way to represent $\gtwo$ as a homogeneous space. As we noticed before, $GL(n,\Complex)$ acts transitively on 2-planes in $\Complex^n$. The stabilizer (it is parabolic subgroup of $GL(n,\Complex)$) consists of matrices 

\[
A=
\left[ {\begin{array}{cc}
	A_0 & A_1 \\
	0   & A_2 \\
	\end{array} }\right].	
\] 

Here $A_0 \in GL(2, \Complex)$, $A_2 \in GL(n-2, \Complex)$ and $A_1$ is an arbitrary complex matrix with two rows and $(n-2)$ columns. So, the complex torus $H:=(\Complex^*)^n \subset GL(n, \Complex)$ acts on $\gtwo$.

Here we describe a coordinate way. We can represent any $L \in \gtwo$ by $2 \times n$ matrix: 

	\[
A_L =
\begin{bmatrix}
a_1  & b_1   \\
a_2  & b_2 	  \\
\vdots & \vdots \\
a_{q+1} & b_{q+1} \\
\end{bmatrix}.		
\]

Let $t=(t_1,\dots, t_n) \in H$. The action of $H$ describes as follows:

 	\[
 t A_L =
 \begin{bmatrix}
 t_1 a_1  & t_1 a_{12}   \\
 t_2 a_2  & t_2 a_{22} 	  \\
 \vdots & \vdots \\
 t_{q+1} a_{q+1} & t_{q+1} b_{q+1} \\
 \end{bmatrix}.		
 \]

\begin{rem}
	Notice, that the diagonal subgroups of $T^n$ and $H$ both act trivially on $\gtwo$.
\end{rem}

Denote $\tau_{ij} = t_i t_j$. It is easy to see that we have an equality: $P_{ij}(\tau L) = \tau_{ij} P_{ij}(L)$. 
Note, that $\frac{\tau_{ij}}{\tau_{ik}} = \frac{t_j}{t_k}$. Hence, we can reconstruct all $t_j$ by $\tau_{ij}$ up to a common factor. Since the common factor is just an element of diagonal of $H$, we can completely rebuild torus action up to action of diagonal (which acts trivially).
One could notice that an element $t \in H$ fixes $L$ iff $t_i t_j P_{ij}(L) = \lambda P_{ij}(L)$ for some $\lambda \in \Complex^*$. Without loss of generality we can assume that $\lambda=1$.

\begin{rem}
	All these results are true for arbitrary $G_{n,k}$ (with obvious modifications).
\end{rem}

\subsection{Stratification of $\gtwo$ and spaces of parameters of strata.} \label{sect12}

In this section we define the notion of strata on $\gtwo$. We give two definitions of stratification and show their equivalence.

Let $K = \{I \subset 2^{\{1,\dots,n  \}  } | ~|I|=2 \}$. Suppose $Y_I = G_{n,k} \setminus M_I ~\forall I \in K$. Here $M_I$ -- standard coordinate chart on $\gtwo$. Suppose also $\sigma = \{I_1,\dots, I_l  \}$ and all $I_j \in K$.

\begin{defn}
For all $\sigma$ we define $$W_\sigma = (\cap_{I \in \sigma} M_I) \bigcap (\cap_{I \in K \setminus \sigma} Y_I).$$ The set $W_\sigma$ (if it's non-empty) is called a \emph{stratum}. Set $W = \bigcap M_I$ is called the \emph{main stratum}.
\end{defn}

This definition of stratum was given by Buchstaber and Terzic in\cite{BT1} and \cite{BT2} for so-called $(2p;q)$ manifolds.

One can notice that $G_{n,k} = \cup W_\sigma$ and $W_\sigma \bigcap W_\eta = \emptyset$. We also notice, that all strata are invariant under $T^n$ and $H$ action.

Now we give another definotion of stratification. This definition was given by Gelfand and MacPherson \cite{GM}. This definition may be found in the paper of Kapranov \cite{Kap}.

\begin{defn}
	Planes $L_1$ and $L_2$ from $\gtwo$ lines in the same stratum iff $\mu(\overline{H.L_1}) = \mu(\overline{H.L_2}) = P$. Here $P \subset \Delta_{n,k}$ -- convex polytope, whose vertices are among vertices of $\Delta_{n,k}$. More precisely, a stratum $\tilde{W}$ consist of all $L$, such $\mu(\overline{H.L}) = P$. 
\end{defn}

This stratification is also $H$ and $T^n$ invariant.
So, we have a question about connection between these two stratifications.

\begin{prop}
	For $\gtwo$ both stratifications are the same.
\end{prop}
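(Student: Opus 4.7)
The plan is to show that both stratifications are refined by the same combinatorial datum, namely the support of the Plücker coordinates. Given $L \in \gtwo$, set $\sigma(L) = \{I \in K : P_I(L) \neq 0\}$. Since $L \in M_I$ iff $P_I(L) \neq 0$ and $L \in Y_I$ iff $P_I(L) = 0$, unwinding the definitions shows that the Buchstaber--Terzic stratum containing $L$ is $W_{\sigma(L)}$. Hence the BT partition is exactly the partition of $\gtwo$ by the function $L \mapsto \sigma(L)$, and everything reduces to showing that the Gelfand--MacPherson polytope $\mu(\overline{H.L})$ depends on $L$ only through $\sigma(L)$, and in fact determines it.

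The claim I would establish is $\mu(\overline{H.L}) = \mathrm{conv}\{e_{ij} : \{i,j\} \in \sigma(L)\}$. The inclusion ``$\subseteq$'' is immediate from the moment map formula: the conditions $P_I = 0$ for $I \notin \sigma(L)$ are closed and $H$-invariant (since $H$ only rescales each $P_I$), so they hold on all of $\overline{H.L}$; thus the explicit expression $\mu(L') = P(L')^{-1}\sum |P_I(L')|^2 e_I$ writes $\mu(L')$ as a convex combination of the $e_{ij}$ with $\{i,j\} \in \sigma(L)$. For the reverse inclusion, by the Atiyah--Guillemin--Sternberg convexity theorem applied to the $T^{q+1}$-action on the compact $H$-invariant set $\overline{H.L}$, the moment image is the convex hull of the $T^{q+1}$-fixed points it contains. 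So it suffices to exhibit, for each $\{i,j\} \in \sigma(L)$, a point of $\overline{H.L}$ mapping to $e_{ij}$. For this I would take the one-parameter subgroup $\lambda(s) = (s^{a_1}, \ldots, s^{a_{q+1}})$ with $a_k = 1$ if $k \in \{i,j\}$ and $a_k = 0$ otherwise; the Plücker coordinate indexed by $\{i,j\}$ then dominates all others by a positive power of $s$ as $s \to \infty$, so $\lim_{s \to \infty} \lambda(s) \cdot L$ is the coordinate plane $\mathrm{span}(e_i, e_j)$, whose image under $\mu$ is exactly $e_{ij}$.

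To finish, I would note that $\sigma \mapsto \mathrm{conv}\{e_{ij} : \{i,j\} \in \sigma\}$ is injective: the points $e_{ij}$ are vertices of $\Delta_{q+1,2}$, so they remain extreme points of any sub-hull containing them, and $\sigma$ is recovered as the vertex set of the polytope. Combining with the previous step, the GM partition is also the partition by $\sigma(L)$, so the two stratifications coincide. I expect the only substantive step to be the one-parameter subgroup limit combined with the convexity theorem; the remaining work is bookkeeping with the definitions.
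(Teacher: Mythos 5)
Your proof is correct and follows essentially the same route as the paper: both stratifications are reduced to the support $\sigma(L)$ of the Pl\"ucker coordinates, with the polytope $\mu(\overline{H.L})$ identified as $\mathrm{conv}\{e_I : P_I(L)\neq 0\}$. The paper merely asserts the substantive converse inclusion (``the converse is also true''), whereas you actually establish it via the one-parameter subgroup degeneration to the coordinate plane $\mathrm{span}(e_i,e_j)$ together with convexity of the moment image of a torus orbit closure --- a welcome filling-in of detail, but not a different method.
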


\begin{proof}[Proof]
	In the definition of Buchstaber and Terzic all sets $Y_I$ are defined by equation $P_I = 0$. So, this definition is equivalent to the next one: we says, for which $I$ Plucker coordinates are equal zero.
	
	On the other hand, a polytope $P$ (from another definition) is a convex hull of it's vertices. If $e_I \in \Delta_{n,k}$ is not a vertice of $P$, then one can see from formula for moment map that all elements of stratum should satisfy the equality $P_I = 0$. The converse is also true. Hence, both definitions are equivalent follows: stratum $\tilde{W}$ is defined by indication, for which $I$ Plucker coordinates $P_I$ is equal zero.
\end{proof}

Further, we will use notions of Buchstaber and Terzic.

We also need definition of a spaces of parameters.

\begin{defn}
	For any stratum $W_\sigma$ we define it's \emph{space of parameters} ar $F_\sigma = W_\sigma / H$. We also define an \emph{admissibe} polytope of stratum as a convex polytope $P_\sigma$, such $\mathring{P}_\sigma = \mu (W_\sigma)$.
\end{defn}

This definition was given in \cite{BT1}. In this article Buchstaber and Terzic also showed that for any stratum there is a homeomorphism $h_\sigma: W_\sigma / T^n \rightarrow \mathring{P}_\sigma \times F_\sigma$, defined by maps $\mu: W_\sigma/T^n \rightarrow \mathring{P}_\sigma$ and by canonical projection $p_\sigma:W_\sigma/T^{q+1} \rightarrow W_\sigma /H$.

\subsection{Admissible polytopes for the hypersimplex $\Delta_{q+1,2}$.}

As we mentioned before, the hypersimplex $\hprsmplx$ is the moment polytope for $\gtwo$. It has a very simple description:

$$\hprsmplx =  \{x \in \Real^n ~|~ 0 \leq x_i \leq 1, ~ x_1 + \dots + x_n = 2  \}.$$

We need to understand which subpolytopes in $\hprsmplx$ may be an admissible. 

\begin{prop}
	The polytope from boundary of admissible polytope is admissible.
\end{prop}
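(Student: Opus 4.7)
The plan is to realise every face $F$ of $P_\sigma$ as the admissible polytope of a second stratum $W_{\sigma'}$, obtained from $W_\sigma$ by a one-parameter degeneration inside $H=(\Complex^*)^{q+1}$. Using the equivalence of stratifications just proved, write $P_\sigma=\operatorname{conv}\{e_I : I\in\sigma\}$, where $\sigma$ records the non-vanishing Pl\"ucker coordinates on $W_\sigma$, and set $\sigma'=\{I\in\sigma:e_I\in F\}$, so $F=\operatorname{conv}\{e_I : I\in\sigma'\}$. It suffices to exhibit a stratum $W_{\sigma'}$ with $\mu(W_{\sigma'})=\mathring F$.

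First I would pick an affine functional $\ell$ on $\Real^{q+1}$ that is non-negative on $P_\sigma$ and vanishes exactly on $F$; since the vertices $e_{ij}$ are integral, after rescaling $\ell$ one can choose integer weights $(w_1,\dots,w_{q+1})\in\Integer^{q+1}$ with $w_i+w_j=\ell(e_{ij})$ for all $i<j$. Choosing any $L\in W_\sigma$ and the one-parameter subgroup $\lambda(t)=(t^{w_1},\dots,t^{w_{q+1}})\in H$, the computation done earlier yields $P_{ij}(\lambda(t)\cdot L)=t^{\ell(e_{ij})}P_{ij}(L)$. Since the minimum value of $\ell(e_{ij})$ over $\{i,j\}\in\sigma$ is $0$, taking $t\to 0$ in the closed Pl\"ucker embedding $\gtwo\hookrightarrow\cp^{\binom{q+1}{2}-1}$ produces a well-defined limit $L_0\in\gtwo$ whose non-vanishing Pl\"ucker coordinates are precisely those indexed by $\sigma'$. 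Hence $L_0\in W_{\sigma'}$, and $W_{\sigma'}$ is non-empty.

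Next I would use the moment formula recalled above: for any $L'\in W_{\sigma'}$,
$$\mu(L')=\frac{1}{P(L')}\sum_{I\in\sigma'}|P_I(L')|^2\, e_I,$$
which is a convex combination of $\{e_I:I\in\sigma'\}$ with strictly positive coefficients, so $\mu(L')\in\mathring F$ and $\mu(W_{\sigma'})\subseteq\mathring F$. For the reverse inclusion I would restrict attention to the single orbit $H.L_0\subseteq W_{\sigma'}$ (the $H$-action preserves the vanishing pattern of Pl\"ucker coordinates): its closure $\overline{H.L_0}$ is a projective toric subvariety of the Pl\"ucker image whose $H$-weight set is $\sigma'$, so by the convexity theorem for Hamiltonian torus actions on projective varieties its moment polytope is $F$ and $\mu(H.L_0)=\mathring F$. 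Combining the two inclusions gives $\mu(W_{\sigma'})=\mathring F$, so $F$ is the admissible polytope of $W_{\sigma'}$.

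The main obstacle is the reverse inclusion in the last step: one must identify the moment polytope of $\overline{H.L_0}$ with $F$ and verify that the open orbit hits the entire relative interior. This reduces to the Atiyah--Guillemin--Sternberg convexity theorem applied to $\overline{H.L_0}$ with the symplectic form induced from Fubini--Study, but care is needed because $\overline{H.L_0}$ is only a toric subvariety and one must argue that the $H$-weights appearing on the smallest linear subspace containing the Pl\"ucker image of $\overline{H.L_0}$ are exactly $\{e_I:I\in\sigma'\}$. Once that identification is in place, everything else is routine manipulation of the moment map formula and of the one-parameter degeneration.
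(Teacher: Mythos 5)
Your proof is correct. There is nothing in the paper to compare it against: the author declares the statement obvious and omits the proof entirely, so what you have written is a complete justification of something the paper treats as folklore. The two pieces of genuine content you supply are (i) the non-emptiness of the stratum $W_{\sigma'}$ attached to a face $F$ of $P_\sigma$, obtained by degenerating a point $L\in W_\sigma$ along the one-parameter subgroup dual to a supporting functional of $F$ and noting that the limit in the closed Plucker image of $\gtwo$ in $\cp^{\binom{q+1}{2}-1}$ has non-vanishing Plucker coordinates indexed exactly by $\sigma'=\{I\in\sigma : e_I\in F\}$; and (ii) the identification $\mu(W_{\sigma'})=\mathring F$, where $\subseteq$ is immediate from the explicit moment-map formula (a strictly positive convex combination of the $e_I$, $I\in\sigma'$, lies in the relative interior of their hull) and $\supseteq$ follows from Atiyah's convexity theorem applied to the closure of the single orbit $H.L_0$, whose weight set is precisely $\{e_I: I\in\sigma'\}$. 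Both steps are standard --- this is essentially the Gelfand--Goresky--MacPherson--Serganova picture of matroid polytopes and torus orbit closures --- and the orbit-closure convexity theorem is exactly the right tool for the reverse inclusion, which is indeed the only non-routine point. Two minor remarks: the constant term of the supporting affine functional must be absorbed using the relation $x_1+\dots+x_{q+1}=2$ before one can write $\ell(e_{ij})=w_i+w_j$ for a genuinely linear $\ell$ (you do this implicitly), and one should note that every $e_I$ with $I\in\sigma$ is automatically a vertex of $P_\sigma$ because it is a vertex of the ambient hypersimplex, so a face of $P_\sigma$ really is $\operatorname{conv}\{e_I: I\in\sigma'\}$ for the $\sigma'$ you define. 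With those observations your argument fills a gap the paper leaves open rather than paralleling an existing proof.
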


This is an obvious and we omit the proof.

\begin{cor}
	Any polytope in $\partial \hprsmplx$ is admissible.
\end{cor}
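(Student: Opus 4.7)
The plan is to reduce the corollary to the preceding Proposition by exhibiting $\hprsmplx$ itself as an admissible polytope. Once this is known, every polytope sitting in $\partial \hprsmplx$ is automatically a polytope in the boundary of an admissible polytope, and the result follows.

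\textbf{Step 1.} I would first verify that $\hprsmplx$ is the admissible polytope of the main stratum $W = \bigcap_{I \in K} M_I$. By definition this stratum consists of those $L \in \gtwo$ all of whose Plücker coordinates are nonzero. From the moment-map formula
\[
\mu(L) = \frac{1}{P(L)} \sum_{i<j} |P_{ij}(L)|^2 e_{ij},
\]
one computes $\mu(L)_k = \bigl(\sum_{j \neq k} |P_{kj}(L)|^2\bigr)/P(L)$. For $L \in W$ the numerator is a strict partial sum of the full sum $P(L)$, so $0 < \mu(L)_k < 1$ for every $k$; hence $\mu(W) \subseteq \mathring{\hprsmplx}$. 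The reverse inclusion is a consequence of the surjectivity $\mu(\gtwo) = \hprsmplx$ together with the Buchstaber--Terzic homeomorphism $h_W : W/T^{q+1} \to \mathring{\hprsmplx} \times F_W$ recalled at the end of Section~\ref{sect12}. This gives $\mu(W) = \mathring{\hprsmplx}$, so $\hprsmplx$ is admissible in the sense of the definition of space of parameters.

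\textbf{Step 2.} Invoke the previous Proposition: any polytope $Q \subset \partial \hprsmplx$ lies in the boundary of the admissible polytope $\hprsmplx$, and so it is itself admissible. For faces of higher codimension one iterates: such a face lies in the boundary of a facet of $\hprsmplx$, which has just been shown to be admissible, so the Proposition applies once more, and descending induction on dimension covers all faces.

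In short, the corollary is formal once $\hprsmplx$ is recognised as admissible. The only genuinely substantive input is the equality $\mu(W) = \mathring{\hprsmplx}$, which I expect to be the main (mild) obstacle -- but it is immediate from the moment-map formula together with the surjectivity of $\mu$ and density of $W$ in $\gtwo$.
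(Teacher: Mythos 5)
Your argument is correct and is exactly the route the paper intends (the paper omits the proof entirely, treating the corollary as immediate from the preceding Proposition): recognise $\hprsmplx$ as the admissible polytope of the main stratum and then descend through the faces. Your Step 1, verifying $\mu(W)=\mathring{\hprsmplx}$ from the moment-map formula, is the one substantive point the paper leaves unstated, and your computation of $\mu(L)_k$ is right.
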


The polytopes in boundary of $\hprsmplx$ has a very nice description: it's hypersimplices, defined by equations $x_i = 0$ or $x_i = 1$. 

There is a polytopes of codimension one in $\hprsmplx$, defined by equation $l_I(x) = x_{i_1} + \dots + x_{i_k} = 1$ for some $k \geq 2$ and $I=\{i_1, \dots, i_k \}$. As Kapranov proved in \cite{Kap}, all admissible polytopes of codimension 1 are either lies in $\partial\hprsmplx$ or defined by the form $l_I$ for some $I$.

Now suppose that a plane $L \in \gtwo$ does not lie in any $L_i$ for any $i$ and $L$ has non-trivial stabilizer in $(\Complex^*)^{q+1}$.

Here is an obvious proposition.

\begin{prop}
	 A plane $L \in \gtwo$ does not lie in any $L_i$ iff for each $i \in [q+1] := \{1, \dots, q+1\}$ there is a $j \neq i$, such as $P_{ij}(L) \neq 0$.	 
\end{prop}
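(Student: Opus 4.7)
The plan is to reduce the statement to a linear-algebraic fact about the $2 \times (q+1)$ matrix $A_L$ representing $L$. I read $L_i$ as the coordinate hyperplane $\{z \in \Complex^{q+1} : z_i = 0\}$, so that $L \subset L_i$ means every vector in $L$ has vanishing $i$-th coordinate. Since the columns of $A_L$ form a basis of $L$, this is equivalent to the $i$-th row $(a_i, b_i)$ of $A_L$ being zero. This characterization is independent of the choice of representative, because the right $GL(2,\Complex)$-action preserves the vanishing of a row. Thus the proposition reduces to showing: the $i$-th row of $A_L$ vanishes if and only if $P_{ij}(L) = 0$ for all $j \neq i$.

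The forward direction is immediate from the formula $P_{ij} = a_i b_j - a_j b_i$: if $a_i = b_i = 0$, then every $P_{ij}$ with this fixed $i$ vanishes. The contrapositive of the interesting direction says that if $(a_i,b_i) \neq (0,0)$, then some $P_{ij}$ with $j \neq i$ is nonzero. I would handle this by splitting on which of $a_i, b_i$ is nonzero. If $a_i = 0$ but $b_i \neq 0$, then $P_{ij} = -a_j b_i$, so $P_{ij} = 0$ for all $j$ forces $a_j = 0$ for every $j$, making the first column of $A_L$ zero, which contradicts $\mathrm{rk}(A_L) = 2$. The case $b_i = 0$, $a_i \neq 0$ is symmetric. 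If both $a_i, b_i \neq 0$, then $P_{ij} = 0$ rearranges to $b_j = (b_i/a_i)\, a_j$ for every $j$, which makes the two columns of $A_L$ proportional, again contradicting rank $2$.

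Putting these together, the non-containment $L \not\subset L_i$ is equivalent to the existence of some $j \neq i$ with $P_{ij}(L) \neq 0$; quantifying universally over $i$ yields the claim.

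There is no real obstacle here: the work is entirely the case analysis in the converse direction, and the only subtlety is to remember that $A_L$ has rank $2$ so the ``degenerate'' cases produced by a nonzero $i$-th row cannot occur. I would therefore expect the written proof to be a short paragraph.
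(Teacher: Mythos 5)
Your argument is correct and complete: the reduction of $L\not\subset L_i$ to the nonvanishing of the $i$-th row of $A_L$, followed by the case analysis using $\mathrm{rk}(A_L)=2$, is exactly the right justification. The paper itself states this proposition as ``obvious'' and gives no proof, so your write-up simply supplies the routine detail the author omitted; there is nothing to compare beyond that.
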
 

Now we can construct an equivalence relation on $[q+1]$, which needs to describe the moment polytope for $H.L$.

\begin{thm}\label{ThmSix}
	Suppose that $L \in \gtwo$ does not lie in any $L_i$. Then there is an an equivalence relation on $[q+1]$ with two equivalence classes $I_1$ and $I_2$, which is defined by $L$. The moment polytope for $H.L$ is a product of two simplices, is defined by the formula:
	
	$$\sum_{i \in I} x_i = 1,$$
	
	and $I$ is either $I_1$ or $I_2$. 
\end{thm}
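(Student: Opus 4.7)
The plan is to unpack the hypotheses about $L$ into a statement about the rows of its matrix representative, and then trace the consequences through the Plücker relations, the stabilizer criterion, and the explicit moment map formula. Throughout, I read the theorem as inheriting the standing assumption from the preceding paragraph that $L$ has positive-dimensional stabilizer in $H/\Delta$.

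\textbf{Step 1: Build the equivalence relation.} Represent $L$ by a $(q+1)\times 2$ matrix with rows $v_i=(a_i,b_i)\in\Complex^2$. The hypothesis that $L$ lies in no coordinate locus $L_i$ means every $v_i\neq 0$, so we get a well-defined map $\varphi\colon [q+1]\to \cp^1$ with $\varphi(i)=[v_i]$. Declare $i\sim j$ iff $\varphi(i)=\varphi(j)$, equivalently $P_{ij}(L)=a_ib_j-a_jb_i=0$. This is manifestly an equivalence relation, its classes $I_1,\dots,I_r$ are in bijection with the points of $\varphi([q+1])\subset\cp^1$, and since $L$ has dimension $2$ at least two distinct directions appear, so $r\geq 2$.

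\textbf{Step 2: Use the stabilizer to force $r=2$.} By the observation recorded just before the subsection on admissible polytopes, $t\in H$ fixes $L$ exactly when $t_it_j=1$ for every pair $(i,j)$ with $P_{ij}(L)\neq 0$, i.e., for every pair of indices in different classes. If $r\geq 3$, pick representatives $i\in I_1$, $j\in I_2$, $k\in I_3$; the three relations $t_it_j=t_jt_k=t_it_k=1$ yield $t_i^2=1$, and iterating with other triples forces every $t_\ell$ to be a root of unity, making the stabilizer finite modulo the diagonal. This contradicts the positive-dimensional stabilizer hypothesis, so $r=2$; this is the first assertion of the theorem.

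\textbf{Step 3: Compute the moment polytope of $\overline{H\cdot L}$.} The identity $P_{ij}(tL)=t_it_jP_{ij}(L)$ shows the support of the Plücker coordinates is invariant along the $H$-orbit, and equals $\{(i,j):i\in I_1,\,j\in I_2\}$. Plugging this into the formula $\mu(L)=P(L)^{-1}\sum_{i<j}|P_{ij}|^2 e_{ij}$ and using the standard toric fact that the moment image of the closure of a complex torus orbit is the convex hull of the weights of the ambient representation at the orbit's fixed points, we obtain that $\mu(\overline{H\cdot L})$ is the convex hull of $\{e_i+e_j:i\in I_1,\,j\in I_2\}$. This is precisely the image of the product of simplices with vertex sets $\{e_i:i\in I_1\}$ and $\{e_j:j\in I_2\}$ under the Minkowski sum, and inside $\hprsmplx$ it is cut out by the single equation $\sum_{i\in I_1}x_i=1$ (equivalently $\sum_{i\in I_2}x_i=1$, by the defining relation $\sum_i x_i=2$ of the hypersimplex).

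The main obstacle is Step 2, where the non-trivial stabilizer hypothesis has to be converted into a graph-theoretic fact (that the graph of non-vanishing Plückers is complete bipartite) via an odd-cycle argument; Steps 1 and 3 are essentially bookkeeping with the Plücker cocycle $P_{ij}(tL)=t_it_jP_{ij}(L)$ and the explicit moment map recalled in Section 2.1.
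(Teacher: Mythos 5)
Your proof is correct, and while it follows the same skeleton as the paper's (declare $i\sim j$ iff $P_{ij}(L)=0$, show this is an equivalence relation, then identify the moment polytope of $\overline{H\cdot L}$ with the slice $\{q_I=1\}$ of $\hprsmplx$), it diverges from the paper at the two places that actually carry weight, and in both cases your route is the more robust one. First, you get transitivity for free by realizing the relation as the fibers of the map $i\mapsto[v_i]\in\cp^1$, whereas the paper derives it from the Pl\"ucker relation $P_{ij}P_{kl}-P_{ik}P_{jl}+P_{jk}P_{il}=0$ together with the hypothesis that no row vanishes; these are equivalent in substance, but yours generalizes less readily to $G_{q+1,k}$ with $k>2$ while being cleaner here. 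Second, and more importantly, you prove that there are exactly two classes by invoking the standing hypothesis that the stabilizer of $L$ in $H$ is larger than the diagonal, via the triangle argument $t_it_j=t_jt_k=t_it_k=1\Rightarrow t_i^2=1$. This step is genuinely necessary: without some such hypothesis the statement is false (a plane with a single vanishing Pl\"ucker coordinate, say only $P_{12}=0$, lies in no coordinate hyperplane yet has $q$ equivalence classes and a full-dimensional orbit polytope). The paper's own argument for the two-class claim never uses the stabilizer hypothesis; it instead argues that every vertex $e_{kl}$ of $P_L$ satisfies $q_I(e_{kl})=1$, which already presupposes that the complement of $I$ is a single class --- so your Step 2 is not merely an alternative but supplies the ingredient the written proof is missing. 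Your Step 3 (Atiyah/Guillemin--Sternberg: the moment image of $\overline{H\cdot L}$ is the convex hull of the weights $e_i+e_j$ with $P_{ij}(L)\neq 0$, hence the Minkowski sum $\mathrm{conv}\{e_i: i\in I_1\}+\mathrm{conv}\{e_j: j\in I_2\}=\hprsmplx\cap\{q_{I_1}=1\}$) makes precise what the paper asserts with ``it's easy to see.'' The one cosmetic point worth tightening: your triangle argument in fact shows that all $t_\ell$ are equal, so the stabilizer modulo the diagonal is trivial, not merely finite; this lets you run the argument under the paper's literal hypothesis ``non-trivial stabilizer'' rather than the stronger ``positive-dimensional'' reading you adopted.
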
 

\begin{proof}[Proof]
	We will say that $i \sim j$ iff $P_{ij}(L) = 0$ (and, formally, $P_{ii} = 0$). This relation obviously have the reflexive property and the symmetric property. We only need to check the transitive property. 
	
	Suppose that $i \sim j$ and $j \sim k$. Since $L$ does not lie in any coordinate hyperplane, we can choose an index $l$, such as $P_{jl}(L) \neq 0.$ Now, by Plucker identity
	
	$$P_{ij}P_{kl} - P_{ik}P_{jl} + P_{jk}P_{il}=0, $$ 
	
	and by the fact that $P_{ij}(L) = P_{jk}(L) = 0$, we can conclude that $P_{ik}(L)P_{jl}(L) = 0$. Hence $P_{ik}(L) = 0$ and $i \sim k$. So, this is an equivalence relation.
	
	Let $I$ be any equivalence class, for this equivalence relation. We can show, that
	a polytope, which is defined by the equation
	
	 $$q_I(x):=\sum_{i \in I} x_i = 1$$ 
	 
	 consists of the moment polytope $P_L$ corresonding $H.L$. Really, any admissible polytope spanned on vertices of hypersimplex $\hprsmplx$. The verticle $e_{ij}$ lies in $P$ iff $q_I(e_{ij})=1$. The last equality holds iff $i \nsim j$.  
	
	We also have that all $k \in J=[q+1] \setminus I$ should be equivalent. Really, from equations $x_1 + \dots + x_n = 2 $ and $\sum_{i \in I} x_i = 1$ we have, that $q_J(x)=\sum_{k \in J} x_i = 1$ for any $x \in P$. Hence  $q_J(x)=1$ also defines the same hyperplane section of $\hprsmplx$ as $q_I(x)=1$. If $k,l \in J$ not equivalent, then $P_{kl}(L) \neq 0$ and $e_{kl}$ is a verticle of $P_L$. But in this case $q_J(e_{kl}) =2$. Contradiction.
	
	It's easy to see that $P_L$ is not only lies in intersection $\hprsmplx$ with hyperplane $q_I(x) = 1$, but it's coinside with this intersection. It's obvious that this intersection is nothing, but product of two simplices $\Delta^{\sharp I - 1} \times \Delta^{\sharp J - 1}$.
	
\end{proof}

	Note that this theorem is actually true not only for some $L$, but for whole stratum, which consists $L$.

\begin{cor}
	For any stratum $W_\sigma \subset \gtwo$ the moment polytope $P_\sigma$ is either a (hyper)simplex or a product of some simplices.	
\end{cor}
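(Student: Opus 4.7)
My plan is to reduce the corollary to Theorem~\ref{ThmSix} by passing to a sub-Grassmannian determined by the stratum. Given $W_\sigma$, I first collect the indices of coordinate hyperplanes that contain the whole stratum:
$$S_\sigma = \{ i \in [q+1] : P_{ij}(L) = 0 \text{ for every } j \neq i \text{ and every } L \in W_\sigma \},$$
and set $T_\sigma = [q+1] \setminus S_\sigma$. Since the vanishing pattern of Plücker coordinates is constant on each stratum, $S_\sigma$ is well-defined, and every $L \in W_\sigma$ is contained in the coordinate subspace $V_{T_\sigma} = \mathrm{span}_\Complex(e_j : j \in T_\sigma)$. Thus $W_\sigma$ sits inside the sub-Grassmannian $G(2, V_{T_\sigma}) \cong G_{|T_\sigma|,2}$, and by the very choice of $S_\sigma$ no plane in $W_\sigma$ lies in a coordinate hyperplane of this smaller Grassmannian (otherwise the missing index would have belonged to $S_\sigma$).

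Inside $G(2, V_{T_\sigma})$ I then run the equivalence relation $i \sim j \iff P_{ij}(L) = 0$ on $T_\sigma$ from Theorem~\ref{ThmSix}. Two cases arise. If the relation is trivial, every $e_{ij}$ with $i,j \in T_\sigma$ is a vertex of $\mu(\overline{H.L})$ (each $P_{ij}(L) \neq 0$ allows rescaling that concentrates all Plücker mass on the pair $(i,j)$), so the moment polytope contains every vertex of $\Delta_{|T_\sigma|,2}$ and by convexity equals the whole hypersimplex. Otherwise, Theorem~\ref{ThmSix} applies verbatim and produces a product of two simplices $\Delta^{|I_1|-1} \times \Delta^{|I_2|-1}$ associated to the partition $T_\sigma = I_1 \sqcup I_2$.

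Finally, I identify the result with $P_\sigma \subset \Delta_{q+1,2}$ using the formula $\mu(L) = P(L)^{-1} \sum_{i<j} |P_{ij}(L)|^2 e_{ij}$: every summand with $i \in S_\sigma$ or $j \in S_\sigma$ vanishes on $W_\sigma$, so $\mu(W_\sigma)$ lies in the face $\{x_i = 0 : i \in S_\sigma\}$ of $\Delta_{q+1,2}$, which is canonically the hypersimplex $\Delta_{|T_\sigma|,2}$. Under this identification $P_\sigma$ coincides with the polytope just computed in the sub-Grassmannian, hence is a hypersimplex or a product of two simplices (degenerating to a single simplex when one factor is $\Delta^0$, and to a point when $|T_\sigma| = 2$). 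The main issue I expect is bookkeeping rather than a conceptual obstacle: verifying that $S_\sigma$ is a genuine stratum invariant, that the restriction to $V_{T_\sigma}$ respects the $H$-action, and that the smaller hypersimplex is a face of the larger one --- all direct consequences of the defining equations of strata and the explicit form of $\mu$.
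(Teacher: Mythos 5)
Your proposal is correct and follows essentially the same route as the paper: the paper's proof simply says ``without loss of generality assume no plane of $W_\sigma$ lies in a coordinate subspace and apply Theorem~\ref{ThmSix},'' and your construction of $S_\sigma$, the passage to the sub-Grassmannian $G(2,V_{T_\sigma})$, and the identification of the face $\{x_i=0 : i\in S_\sigma\}$ with $\Delta_{|T_\sigma|,2}$ is precisely the content of that ``without loss of generality,'' made explicit. Your extra case split (trivial equivalence relation giving the full hypersimplex versus Theorem~\ref{ThmSix} giving a product of two simplices) fills in a detail the paper leaves implicit, but does not change the argument.
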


\begin{proof}[Proof]
	Assume that $P_\sigma $ is not hypersimplex. Without loss of generality we can assume that $W_\sigma$ consists of planes, which does not lie in any coordinate subspace. Now we can apply the previous theorem.
\end{proof}

\begin{rem}
	By theorem of Kapranov \cite{Kap}, any polytope, which is defined by some linear form $q_I(x)$ is actually a boundary of some admissible polytope. He also showed, that any admissible polytope of codimension one, which lie in $\hprsmplx$ is either a polytope from $\partial \hprsmplx$ or a polytope, defined by the equation $q_I(x) = 1$ for some $I$.
\end{rem}

\subsection{Axioms and examples of $(2n,k)$-manifolds.}
In this section we enlist $6$ axioms of $(2p,q)$-manifolds. These axioms were given in \cite{BT2}. 

Let $M$ is a compact oriented simple-sonnectd manifold, $\dim_\Real (M) = 2n$. Suppose we have an action $\theta$ of torus $T^q$. Suppose also we have an equivariant map $\mu: M \rightarrow \Real^q$, with trivial $T^q$ action on $\Real^q$ and the image of $\mu$ is a convex polytope $P^q$. Triple $(M, \theta, \mu )$ called $(2n,k)$-manifold, if it satisfies next $6$ axioms. 

\begin{axiom}
	There is a smooth atlas of charts $\{M_i,\phi_i \}$ on $M$, $\phi: M_i \rightarrow \Real^{2p} = \Complex^p$, such all charts are an invariant under $T^q$ action and consists exactly one fixed point $x_i$. Moreover $\phi(x_i) = 0$ and any chart $M_i$ is dense in $M$.
\end{axiom}

\begin{axiom}
	The map $\mu$ gives a bijection between fixed points of $T^k$-action and vertices of $P^k$.
\end{axiom}

Next, we will use definition of stratum from the previous section.

Let $\Sigma$ be set of all admissible sets (set called \emph{admissible} if corresponding stratum is non-empty). define a map $s: \Sigma \rightarrow S(P)$, which maps each admissible set from $\sigma$ into polytope $P_\sigma = conv\langle \mu(x_{i_1}) ; \dots; \mu(x_{i_l})\rangle$. We will call such polytopes an admissible polytopes (there is no contradictions with previous section).

\begin{defn}
	Let $S(T^k)$ -- set of all connected subgroups in $T^k$. The map $\chi: M \rightarrow S(T^k)$,$~\chi: x \mapsto Stab(x)$ called \emph{characteristic function}.
\end{defn}

Now we can formulate next axiom.

\begin{axiom}
	Characteristic function is constant on each stratum $W_\sigma$.
\end{axiom}

Denote $T^\sigma := T^k/\chi(W_\sigma)$.

Notice, that $\mu: M \rightarrow P^k$ induce a map $\hat{\mu}: M/T^k \rightarrow P^k$.

\begin{axiom}
	Almost moment map $\mu$ should satisfy next properties:
	\\~
	\\
	1. $\mu(W_\sigma) \subset \mathring{P}_\sigma;$
	\\~
	\\
	2. $\hat{\mu}:W_\sigma/T^\sigma \rightarrow \mathring{P}_\sigma $ if locally trivial fibration;
	\\~
	\\
	3. $\dim P_\sigma = \dim T^\sigma.$
	\\~
\end{axiom}

\begin{defn}
	The fiber $F_\sigma$ of locally trivial fibration $\hat{\mu}: W_\sigma/T^\sigma \rightarrow \mathring{P}_\sigma $ is called \emph{space of parameters} of $W_\sigma$. 
\end{defn}

\begin{rem}
	This definition is the same for the case of $\gtwo$.
\end{rem}

\begin{axiom}
	For all stratum $W_\sigma$ the boundary of  leaf$\partial W_\sigma [\xi_\sigma, c_\sigma]$ is union of leafs $W_{\overline{\sigma}}$, such as $P_{\overline{\sigma}}$ is facet $P_\sigma$.
\end{axiom}

\begin{axiom} \label{A6}
	There exist a topological space $\mathcal{F}$ and subspaces $\tilde{F}_\sigma \subset \mathcal{F}$, such as:
	\begin{enumerate}
		\item For the main stratum $W$ there is an equality $\tilde{F} = F$. Here $F$ -- space of parameters of the main stratum;
	
		\item $\mathcal{F}$ is a compactification of $F$;

		\item $\mathcal{F} = \cup_\sigma \tilde{F}_\sigma$;
		
		\item If $P_{\sigma_1}$ lies in $\partial P_\sigma$, then $\tilde{F}_\sigma  \subset \tilde{F}_{\sigma_1}$;

		\item For all $\sigma$ there exist $p_\sigma: \tilde{F}_\sigma \rightarrow F_\sigma$;

		\item The map $\mathcal{H}: \cup_\sigma \mathring{P}_\sigma \times \tilde{F}_\sigma \rightarrow \Delta_{q+1,2} \times \mathcal{F}$, defined as $\mathcal{H}(x,c) = (x; p_\sigma(c))$ is continious map.
	\end{enumerate}
	
\end{axiom}

Examples of $(2n,k)$-manifolds is spheres (type of $(2n,1)$) and quasitoric manifolds (type of $(2n,n)$). Also, Grassman Manifolds $G_{4,2}$
and $G_{5,2}$ are also such manifolds of types $(8;3)$ and $(12;4)$ respectfully. It was showed by Buchstaber and Terzic in \cite{BT3} and \cite{BT1}. For other $G_{q+1,2}$ with $q \geq 5$ it was unknown, because there is no proof of veracity of axiom 6 in this case.

\subsection{Definition of Chow factor.}
Here we briefly describe the notion of Chow factor of a projective variety $X$ which possesses an algebraic group $G$. One can find further details and references in \cite{Kap}.

Suppose we have a projective variety $X$ over $\Complex$ and an algebraic group $G$,which acts on $X$. Suppose also, that there is an open subset $U \subset X$, such $G$ acts freely on $U$. Then for any $x \in U$ the closure of it's $G$-orbit $\overline{G.x}$ (suppose that dimension of such cycle is equal $r$) is an algebraic cycle. Moreover, all cycles, obtained this way has the same degree (as algebraic subvariety in $X$) and represent the same homology class $\delta$ in $H_{2r}(X, \Complex)$. Obviously, all these cycles are parametrized by $U/G$. Hence we have an embedding of $U/G$ into $\mathcal{C}_r(X;\delta)$ -- the set of all cycles in $X$, which has fixed dimension $r$ and represents homological class $\delta$. $\mathcal{C}_r(X;\delta)$ has a structure of complex (even projective) manifold (look \cite{Kap}, chapter 0.1 and Barlet paper \cite{Barlet}), что $\mathcal{C}_r(X;\delta)$.

So, we have an important definition.

\begin{defn}
	\emph{Chow factor} $X // G$ of variety $X$ is closure of $U/G$ in $\mathcal{C}_r(X;\delta)$.
\end{defn}

\begin{rem}
	Obviously, this is not only way to construct compactification of $U/G$. There is an embedding of $U/G$ into Hilbert scheme or famous GIT-factor of $X$. Kapranov in \cite{Kap} showes that in the case of reductive $G$ there is a birational map from Chow factor to the GIT-factor. Kapranov also shows, if $X = \grass$ and $G = H = (\Complex^*)^n$, then Chow factor is isomorphic to compactification of $U/G$ in Hilbert scheme.
\end{rem}

In the case of $X = \gtwo$ and $G = H$ there is an important theorem, proved by Kapranov (\cite{Kap}, theorem 4.1.8):

\begin{thm}
	The Chow factor $\gtwo // H$ is isomorphic to the moduli space $\moduli$ of stable curves of genus zero with n marked points.
\end{thm}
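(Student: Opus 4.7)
The plan is to follow Kapranov's original argument, which establishes the isomorphism in two stages: first identify the open parts, then match the two compactifications stratum by stratum.

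First, I would set up the Gelfand--MacPherson correspondence on the main stratum. A plane $L \in \gtwo$ in general position is represented by a $2 \times (q+1)$ matrix whose columns, viewed projectively, give an ordered $(q+1)$-tuple of distinct points in $\cp^1$. The action of $H = (\Complex^*)^{q+1}$ rescales columns individually, hence fixes the projective points, while the residual $GL(2,\Complex)$-action on the rows descends to the diagonal $\pgl$-action on all $q+1$ points simultaneously. This gives a natural identification of the main stratum quotient $W/H$ with the configuration space $M_{0,q+1} = ((\cp^1)^{q+1} \setminus \Delta)/\pgl$ of $q+1$ distinct labelled points in $\cp^1$, which is the open part of $\overline{M}_{0,q+1}$.

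Second, I would identify the two compactifications. On the Chow side, a boundary point of $\gtwo // H$ is by definition a limit cycle of generic $H$-orbit closures; since each such orbit closure is a toric variety whose moment polytope is the full hypersimplex $\hprsmplx$, the limit decomposes into a union of toric subvarieties whose moment polytopes form a matroid polytope subdivision of $\hprsmplx$. The key combinatorial input is that subdivisions of the rank-$2$ hypersimplex are in bijection with tree-like (stable) configurations of $q+1$ marked points on $\cp^1$: each maximal cell of the subdivision corresponds to a component of the tree, and the bipartition $[q+1] = I_1 \sqcup I_2$ recorded by the equation $q_I(x)=1$ from Theorem~\ref{ThmSix} encodes precisely how the marked points are separated by the node joining that component to the rest of the tree. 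This matches the stratification of $\overline{M}_{0,q+1}$ by dual graphs exactly.

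Third, I would construct the isomorphism explicitly by producing inverse maps. In one direction, to a stable curve $(C; p_1, \dots, p_{q+1})$ one assigns the cycle obtained by gluing together, along the nodes, the $H$-orbit closures of the planes whose columns are read off from the marked points on each irreducible component; in the other direction, the dual graph of the components of a boundary cycle, together with the Plücker data on each component, reconstructs a stable curve. The main obstacle, and the deepest part of Kapranov's argument, is upgrading this set-theoretic bijection to a morphism of algebraic varieties: one must show that the tautological family of cycles over $\gtwo // H$ is flat, so that the resulting classifying map to $\overline{M}_{0,q+1}$ is regular, and then conclude via normality of $\overline{M}_{0,q+1}$ and Zariski's Main Theorem that this bijective morphism between irreducible projective varieties is an isomorphism.
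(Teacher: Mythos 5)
You should first be aware that the paper contains no proof of this statement: it is quoted as Theorem 4.1.8 of \cite{Kap} and used as a black box, so there is no in-paper argument to compare yours against. What you have written is a reconstruction of Kapranov's external proof, and as an outline it is faithful to his strategy: the identification of the open stratum $W/H$ with the configuration space $M_{0,q+1}$ via Gelfand--MacPherson, the description of limit cycles of generic $H$-orbit closures by polyhedral (matroid polytope) subdivisions of the hypersimplex $\hprsmplx$, and the bijection between subdivisions of the rank-two hypersimplex and trees with $q+1$ labelled leaves are exactly the three ingredients of \cite{Kap}; the bipartition $I_1\sqcup I_2$ appearing in Theorem \ref{ThmSix} of the present paper is indeed the codimension-one shadow of that combinatorics.

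The step that would not survive being expanded as written is the third one. The tautological family over the Chow quotient is a family of $q$-dimensional toric cycles, not of curves, so flatness of that family cannot by itself yield a classifying map to $\overline{M}_{0,q+1}$: a stable curve must first be \emph{extracted} from a limit cycle by applying the Gelfand--MacPherson correspondence componentwise to the degenerate configurations carried by the irreducible components of the cycle, and one must prove that this assignment varies algebraically. Alternatively one maps in the other direction, out of $\overline{M}_{0,q+1}$, which is easy because $\overline{M}_{0,q+1}$ carries a universal family --- but then the bijective-morphism-plus-normality argument requires normality of the \emph{target}, i.e.\ of the Chow quotient $\gtwo//H$, which is not known a priori and is a genuine additional point to establish. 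Either way, the passage from the set-theoretic matching of strata to an isomorphism of projective varieties is precisely where the content of Kapranov's proof lies, and your sketch names it without supplying it; the rest of your outline is correct.
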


\section{Cross-ratios on Grassmanians $\gtwo$.}

\subsection{Definition of cross-ratios and its properties.}

In this section we define and describe some objects, named  cross-ratios. Cross-ratios are a crucial element of our construction, so it is necessary to study they before proving the main theorem.

\begin{defn}
	A cross-ratio is a meromorphic function on $\gtwo$ defined by the next formula:
	$$w_{i,j,k,l} := \frac{P_{ik}P_{jl}}{P_{il}P_{jk}}. $$
\end{defn}

It is not so hard to check that total number of cross-ratios on $\gtwo$ is equal $\binom{q+1}{4}$. 

There is some useful formulas for cross-ratios.

\begin{prop} $\\$

	\begin{enumerate} 
		\item $ w_{i,j,k,l} = w^{-1}_{j,i,k,l} ;$
		
		\item $1 - w_{i,j,k,l} = -w_{i,k,j,l};$
		
		\item $w_{m,j,k,l} = \frac{w_{i,j,m,k} - 1}{w_{i,j,m,k} - \Phi_{i,j,l,k}};$
		
		\item $\forall i,j,k,l,m : ~w_{i,j,k,l}w_{i,j,k,m}^{-1}w_{i,j,l,m} = 1.$
	\end{enumerate}
\end{prop}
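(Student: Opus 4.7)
The plan is to observe that each of the four identities is a purely algebraic statement about the rational function $w_{i,j,k,l}=P_{ik}P_{jl}/(P_{il}P_{jk})$ on $\gtwo$, so the proof reduces to symbol-pushing with the Plücker coordinates. Two of the identities come straight from the definition by cancellation, while the other two use the single Plücker relation stated earlier in the excerpt.

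First I would dispatch identities (1) and (4) by direct substitution. For (1), interchanging $i$ and $j$ in the definition swaps the roles of $P_{ik}\leftrightarrow P_{jk}$ and $P_{jl}\leftrightarrow P_{il}$, so the numerator and denominator of $w_{j,i,k,l}$ are exactly the denominator and numerator of $w_{i,j,k,l}$. For (4), expanding
\[
w_{i,j,k,l}\,w_{i,j,k,m}^{-1}\,w_{i,j,l,m}
=\frac{P_{ik}P_{jl}}{P_{il}P_{jk}}\cdot\frac{P_{im}P_{jk}}{P_{ik}P_{jm}}\cdot\frac{P_{il}P_{jm}}{P_{im}P_{jl}}
\]
each Plücker coordinate $P_{ik},P_{jl},P_{il},P_{jk},P_{im},P_{jm}$ appears exactly once in the numerator and once in the denominator, so the product collapses to $1$. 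No additional relation is needed for either (1) or (4).

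Next I would address identity (2), which is the content of the Plücker relation in disguise. Starting from
\[
1-w_{i,j,k,l}=\frac{P_{il}P_{jk}-P_{ik}P_{jl}}{P_{il}P_{jk}},
\]
I would invoke the Plücker identity $P_{ij}P_{kl}-P_{ik}P_{jl}+P_{jk}P_{il}=0$ to rewrite the numerator as $\pm P_{ij}P_{kl}$, and then recognize the resulting expression as $\pm w_{i,k,j,l}$ up to the antisymmetry $P_{\alpha\beta}=-P_{\beta\alpha}$. Identity (3) will follow by the same strategy: solve (2) for $w_{i,j,k,l}$, substitute into a cross-ratio with the index $i$ replaced by $m$, and clear denominators using the Plücker relation linking $P_{mj},P_{mk},P_{ml}$ and $P_{ij},P_{ik},P_{il}$ along a common pair. (The paper's typographic $\Phi$ in (3) is evidently a stray symbol for a cross-ratio, and I would restate the identity in terms of $w$'s after verifying the correct form of the right-hand side.)

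The main bookkeeping obstacle is sign conventions: because $P_{\alpha\beta}=-P_{\beta\alpha}$, every time one reorders a pair of indices a sign flips, and the Plücker relation carries its own alternating signs. I would therefore adopt once and for all the convention $P_{ij}=a_ib_j-a_jb_i$ (so that $P_{ji}=-P_{ij}$), rewrite the Plücker relation in the specific index order needed, and track signs carefully when moving between $w_{i,k,j,l}$ and $w_{i,j,k,l}$. Beyond that accounting step, the identities are mechanical, and the proof amounts to nothing more than inserting the definition and applying the Plücker relation at the one place where a numerator needs to be rewritten.
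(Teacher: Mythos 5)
Your proposal is correct and follows essentially the same route as the paper: identities (1) and (4) by direct cancellation of Plücker coordinates, identity (2) by rewriting the numerator via the Plücker relation, and identity (3) by combining (2) with the Plücker relation (and you correctly read the stray $\Phi_{i,j,l,k}$ as $w_{i,j,l,k}$, which is what the paper's own computation uses). Your explicit attention to the sign convention $P_{ji}=-P_{ij}$ is a point the paper itself glosses over.
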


\begin{proof}[Proof]
	All these formulas will be proved by direct computations and using Plucker identities.
	\begin{enumerate}

		\item This statement directly follows from the definition:	$$w_{i,j,k,l} = \frac{P_{ik} P_{jl} }{P_{il}P_{jk}}$$ and
		$$w_{j,i,k,l} = \frac{P_{il} P_{jk} }{P_{ik}P_{jl}}.$$

		\item For proving this we need a Plucker identities: $P_{jm }P_{il} - P_{im} P_{jl} = P_{lm} P_{ij}$. Now we have:
		
		$$ 1 - w_{i,j,k,l}  = 1 - \frac{P_{ik} P_{jl} }{P_{il}P_{jk}} = \frac{P_{il}P_{jk} - P_{ik} P_{jl} }{P_{il}P_{jk}} = \frac{P_{ij}P_{lk} }{P_{il}P_{jk}} = -w_{i,k,j,l}.$$
		
		\item $$ \frac{w_{i,j,m,k} - 1}{w_{i,j,m,k} - w_{i,j,l,k}} = \frac{P_{mk}P_{jl}}{P_{jk}P_{ml}} = w_{m,j,k,l}.$$ 
		
		Here we use previous formula and (again) a Plucker identity in order to simplify denominator.

		\item $$w_{i,j,k,l}w_{i,j,k,m}^{-1}w_{i,j,l,m} = \frac{P_{ik} P_{jl} P_{im}P_{jk}P_{il}P_{jm}}{P_{il}P_{jk}P_{ik} P_{jm}P_{im}P_{jl}}= 1.$$
	\end{enumerate}	
	
\end{proof}	

\begin{rem}
	The name "cross-ratio" was chosen not by an accident. It should emphasize a deep connection between  $W/H$ and configurations of (distinct) points on projective line. More precisely, there is a bijection between  points of $W/H$ and configurations of (distinct) points on $\cp^1$. This is an example of Gelfand-MacPherson correspondence. From this point of view our "cross-ratios" become truly cross-ratios of points from configuration. The description of the Gelfand-MacPherson correspondence will be given in the next section.
\end{rem}

\subsection{Gelfand-MacPherson correspondence.}

Now we briefly describe Gelfand-MacPherson correspondence for arbitrary Grassmann manifolds. This correspondence connect points of $G_{q+1,k}$ and configuration of points on $\cp^{k-1}$. For more details see \cite{Kap} and \cite{GM}.

Denote by $\matkn$ the space of all comples $(q+1) \times k$ (matrices with $n$ rows and $k$ columns). One could notice that $\matkn$ possesses actions of two complex Lie groups: left action of $H = (\Complex^*)^{q+1}$  and right action of $GL(k; \Complex)$. There is an open set $\matknm$ of all matrices in $\matkn$. $\matknm$ consists of all matrices, such each $k \times k$ minor is non-zero. One could see that $\matknm$ is invariant under action of $H$ and $GL(k,\Complex)$. 
Moreover, we have next proposition.

\begin{prop}
	$\matknm / GL(k; \Complex)$ is the main stratum $W$ of $\grass$.
\end{prop}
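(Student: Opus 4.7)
The plan is to unwind both sides of the claimed identity purely at the level of definitions and observe that they coincide. Recall that the Grassmannian $\grass$ is the quotient of the full-rank $(q+1)\times k$ matrices by the right $GL(k;\Complex)$-action sending a matrix to the $k$-plane spanned by its columns. Under this description, for any $k$-subset $I\subset\{1,\dots,q+1\}$ the Plücker coordinate $P_I(L)$ equals the $k\times k$ minor of any representing matrix $A_L$ on the rows indexed by $I$, up to the common nonzero factor $\det(g)$ arising from the $GL(k;\Complex)$-ambiguity. Consequently, the condition ``$P_I(L)\neq 0$ for all $I$'' translates to ``every $k\times k$ minor of $A_L$ is nonzero,'' which is precisely membership in $\matknm$.

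First I would check that $\matknm$ is invariant under the right action of $GL(k;\Complex)$: for $A\in\matknm$ and $g\in GL(k;\Complex)$, the $I$-th $k\times k$ minor of $Ag$ is $\det(A_I)\det(g)\neq 0$, so $Ag\in\matknm$. In particular the quotient $\matknm/GL(k;\Complex)$ makes sense as a subspace of $\grass$.

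Next, by the definition of the main stratum from Section \ref{sect12}, $W=\bigcap_{I}M_I$ where $M_I=\{L\in\grass:P_I(L)\neq 0\}$ is the standard Plücker chart. A point $L\in\grass$ therefore lies in $W$ if and only if all of its Plücker coordinates are nonzero, if and only if (by the previous paragraph) any matrix representative $A_L$ has all $k\times k$ minors nonzero, i.e.\ $A_L\in\matknm$. Since the set of matrix representatives of $L$ is exactly the $GL(k;\Complex)$-orbit of $A_L$, we obtain a well-defined bijection between $\matknm/GL(k;\Complex)$ and $W$ induced by the column-span map.

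There is essentially no obstacle here: the only substantive point is the identification of Plücker coordinates with maximal minors of a matrix representative, which is the content of the Plücker embedding already recorded earlier in the paper. I would finish by remarking that the bijection is a homeomorphism because the column-span map $\matknm\to W$ is a restriction of the standard smooth quotient map defining $\grass$, and the restriction is a principal $GL(k;\Complex)$-bundle onto its image.
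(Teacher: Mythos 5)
Your proof is correct and is exactly the definition-unwinding argument the paper has in mind (the paper simply declares the proof ``straightforward'' and omits it): identifying Plücker coordinates with maximal minors of a matrix representative, so that $W=\bigcap_I M_I$ corresponds precisely to the $GL(k;\Complex)$-orbits of matrices with all $k\times k$ minors nonzero. No gaps; the remarks on $GL(k;\Complex)$-invariance of $\matknm$ and on the quotient being a restriction of the standard principal bundle are exactly the details one would want to record.
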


\begin{rem}
	For an arbitrary $k$ the definiton of the main stratum and the space of parameters of stratum are the same as for $k=2$. See \cite{BT1}.
\end{rem}

The proof of this proposition is straightforward.

Now we can look at  $(\cp^{k-1})^n$. We can choose a subset of pairwise distinct points $(\cp^{k-1})^n_{max}: = \{(x_1, \dots, x_{q+1}) \in (\cp^{k-1})^n \lvert x_i \neq x_j  \}$. $(\cp^{k-1})^{q+1}_{max}$ has a very simple description.

\begin{prop}
	$(\cp^{k-1})^{q+1}_{max} = H \backslash \matknm$. Here $H \backslash \matknm$ denotes left action of $H$.
\end{prop}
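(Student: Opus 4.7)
The plan is to exhibit an explicit bijection by sending each matrix to the tuple of projectivizations of its rows. Define
\[
\Phi: \matknm \to (\cp^{k-1})^{q+1}, \qquad \Phi(A) = ([r_1(A)], \dots, [r_{q+1}(A)]),
\]
where $r_i(A)$ denotes the $i$-th row of $A$. Each $r_i(A)$ is a nonzero vector in $\Complex^k$, since if some row vanished then every $k\times k$ minor containing that row would be zero, contradicting $A \in \matknm$. The proof then breaks into three verifications: that $\Phi$ lands in $(\cp^{k-1})^{q+1}_{max}$, that it is surjective, and that its fibers are exactly the orbits of the left $H$-action.

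For the target, I would observe that in the case $k=2$ relevant to the paper, the $2\times 2$ minor formed by rows $i$ and $j$ is precisely the Pl\"ucker-type expression $a_{i1}a_{j2}-a_{i2}a_{j1}$, which is nonzero if and only if the vectors $r_i(A)$ and $r_j(A)$ are linearly independent in $\Complex^2$, i.e.\ project to distinct points of $\cp^1$. So $A \in \matknm$ forces pairwise distinctness of the projective rows, giving $\Phi(A) \in (\cp^{k-1})^{q+1}_{max}$.

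For surjectivity, given $(x_1,\dots,x_{q+1}) \in (\cp^{k-1})^{q+1}_{max}$ I choose arbitrary representatives $r_i \in \Complex^k \setminus \{0\}$ with $[r_i]=x_i$ and assemble them as the rows of a matrix $A$. Pairwise distinctness of the $x_i$ guarantees (in the $k=2$ case) that each $2\times 2$ minor is nonzero, so $A \in \matknm$ and $\Phi(A) = (x_1,\dots,x_{q+1})$. For the fiber description: the action of $t=(t_1,\dots,t_{q+1}) \in H$ multiplies the $i$-th row by $t_i$, which does not change $[r_i(A)]$, so $\Phi$ is $H$-invariant. Conversely, if $\Phi(A)=\Phi(B)$, then for each $i$ we have $r_i(B) = t_i r_i(A)$ for some $t_i \in \Complex^*$, and then $B = \operatorname{diag}(t_1,\dots,t_{q+1}) \cdot A$, so $B$ lies in the $H$-orbit of $A$.

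There is no serious obstacle; the argument is essentially definition-chasing once one notices that in the $k=2$ setting the max-minor condition collapses to the pairwise non-proportionality of rows. The only point deserving care is making explicit the translation between "all $k\times k$ minors nonzero" and "pairwise distinct points in $\cp^{k-1}$"; this equivalence, immediate for $k=2$, is what makes the identification work cleanly for $\gtwo$.
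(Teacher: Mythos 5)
Your proof is correct and is precisely the ``straightforward'' argument the paper has in mind (the paper in fact omits the proof entirely, declaring it obvious). You also rightly flag the one point of substance --- that the equivalence between ``all $k\times k$ minors nonzero'' and ``pairwise distinct projective points'' holds for $k=2$ but not for general $k$, so the identification as literally stated should be read in the $k=2$ case relevant to $\gtwo$.
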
 

The proof is also straightforward and obvious.

Now we can combine two previous propositions and obtain an important corollary.

\begin{prop}
	$F = W / H = (\cp^{k-1})^{q+1}_{max} / PGL(k; \Complex).$
\end{prop}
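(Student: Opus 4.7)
The plan is to chain the two preceding propositions together, being careful about the order in which we take quotients and about the reduction from $GL(k;\Complex)$ to $PGL(k;\Complex)$.

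First, I would unwind the definitions. By the first proposition of this subsection, the main stratum is $W = \matknm / GL(k;\Complex)$, where $GL(k;\Complex)$ acts on the right. The group $H = (\Complex^*)^{q+1}$ acts on $\matknm$ from the left, and since left and right multiplications on a matrix space commute, these two actions commute. Therefore the $H$-action descends to $W$, and we obtain
\[
F \;=\; W/H \;=\; \matknm\big/\bigl(H \times GL(k;\Complex)\bigr),
\]
where the combined action is free on $\matknm$ in the sense needed to form the quotient.

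Next I would swap the order of quotients. Taking the $H$-quotient first and invoking the second proposition of this subsection, we obtain $H\backslash \matknm = (\cp^{k-1})^{q+1}_{max}$, and hence
\[
F \;=\; \bigl(H\backslash \matknm\bigr)\big/GL(k;\Complex) \;=\; (\cp^{k-1})^{q+1}_{max}\big/GL(k;\Complex).
\]
The final step is to observe that the center $Z = \Complex^{*} \cdot I_k$ of $GL(k;\Complex)$ consists of scalar matrices, which act trivially on $\cp^{k-1}$ and hence also on $(\cp^{k-1})^{q+1}_{max}$. Therefore the induced action factors through the quotient $GL(k;\Complex)/Z = PGL(k;\Complex)$, giving the desired identification
\[
F \;=\; (\cp^{k-1})^{q+1}_{max}\big/PGL(k;\Complex).
\]

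There is essentially no obstacle here: everything is a formal combination of the two preceding propositions plus the triviality of the scalar action on projective space. The only point requiring a brief justification is that the quotients commute, which follows from the commutativity of the left $H$-action and the right $GL(k;\Complex)$-action; alternatively one can regard $H \times GL(k;\Complex)$ as acting on $\matknm$ and form the quotient in one step.
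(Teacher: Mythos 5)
Your argument is correct and is exactly the route the paper takes (the paper simply says the result follows by combining the two preceding propositions and calls the proof straightforward); you have just written out the details — commuting the left $H$-quotient with the right $GL(k;\Complex)$-quotient and noting that the scalars act trivially on $(\cp^{k-1})^{q+1}_{max}$ so the action factors through $PGL(k;\Complex)$.
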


This is exactly Gelfand-MacPherson correspondence.
One could observe that $(\cp^{k-1})^{q+1}_{max} / PGL(k; \Complex)$ is set of all configurations of pairwise distinct points up to automorphism of $\cp^{k-1}$. In the case $k=2$ we have that $F$ consists of all such configurations on $\cp^1$. This observation is very crucial for us. The observation is also explains the definiton of "cross-ratios". 

For the sake of completeness, we should mention the strongest version of this corresponedce.

\begin{thm}[\cite{Kap}, theorem $2.2.4$]
	The Gelfand-MacPherson correspondence extends to an isomorphism of Chow quotients $\grass // H$ and $(\cp^{k-1}) // PGL(k; \Complex)$.
	
\end{thm}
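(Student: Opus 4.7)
The plan is to upgrade the open-stratum Gelfand--MacPherson correspondence $W/H = (\cp^{k-1})^{q+1}_{max}/PGL(k;\Complex)$, established in the previous subsection, to the equality of its two Chow compactifications. The central device is the double fibration obtained from the commuting left $H$-action and right $GL(k;\Complex)$-action on $\matknm$: the right quotient yields the main stratum $W \subset \grass$, the left quotient yields the configuration space $(\cp^{k-1})^{q+1}_{max}$, and the common double quotient is the space of parameters $F$. Because the two actions commute, every $H$-orbit in $W$ has a canonical lift to an $(H \times GL(k;\Complex))$-invariant subset of $\matknm$, which in turn pushes down to a $PGL(k;\Complex)$-orbit in $(\cp^{k-1})^{q+1}_{max}$, and vice versa.

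The first step is to match generic orbits as algebraic cycles. For a generic $L \in W$ with associated configuration $x \in (\cp^{k-1})^{q+1}_{max}$, I would check that $\overline{H.L} \subset \grass$ and $\overline{PGL(k;\Complex).x} \subset (\cp^{k-1})^{q+1}$ are irreducible cycles of the same dimension, and identify their homology classes $\delta_1$ and $\delta_2$ in the ambient projective varieties. This produces an explicit regular isomorphism $F \xrightarrow{\sim} F'$ between the corresponding open loci inside $\mathcal{C}_r(\grass;\delta_1)$ and $\mathcal{C}_s((\cp^{k-1})^{q+1};\delta_2)$.

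The second step is to extend this identification to the boundary. By definition the two Chow quotients are the closures of $F$ and $F'$ inside their respective Chow varieties; since the Chow variety is a proper algebraic space, it suffices to verify that one-parameter degenerations are transported consistently. For a family $L_t \in W$ approaching a boundary point, I would track the cycle $\overline{H.L_t} \subset \grass$ as $t \to 0$ by lifting to $\matknm$ and then pushing down via the left $H$-quotient, and compare the resulting limit cycle in $(\cp^{k-1})^{q+1}$ with the limit $PGL(k;\Complex)$-orbit closure of the configuration of $x_t$.

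The main obstacle will be showing that the irreducible components of the limit cycle on the Grassmannian side correspond bijectively, with matching multiplicities, to the irreducible components of the limit configuration cycle on the product side. In the case $k=2$ relevant to the rest of the paper this matching is transparent, since both sides degenerate into the combinatorics of nodal trees of $\cp^1$'s with marked points and are thereby identified with $\moduli$ by the previously cited Theorem~4.1.8 of Kapranov. For general $k$ the component matching is more subtle, but can be carried out by exploiting the explicit $(H \times GL(k;\Complex))$-equivariant geometry of $\matknm$ and the functoriality of cycle-theoretic pushforward along the two commuting quotient maps; this gives a regular map between the Chow quotients whose inverse is constructed symmetrically, yielding the desired isomorphism.
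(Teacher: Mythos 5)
This statement is not proved in the paper at all: it is imported verbatim from Kapranov (Theorem~2.2.4 of \cite{Kap}), so there is no internal proof to compare against. Your outline is broadly consistent with how Kapranov actually argues --- the double quotient of $\matknm$ by the commuting left $H$-action and right $GL(k;\Complex)$-action, matching of generic orbit closures as cycles, and passage to closures in the respective Chow varieties --- but as a proof it has two genuine gaps. First, the step you yourself flag as ``the main obstacle'' (that the irreducible components of the limit cycle $\lim_t \overline{H.L_t}$ in $\grass$ correspond bijectively, with equal multiplicities, to the components of the limit $PGL(k;\Complex)$-orbit cycle in $(\cp^{k-1})^{q+1}$) is precisely the content of the theorem; asserting that it ``can be carried out by exploiting the equivariant geometry of $\matknm$ and functoriality of pushforward'' defers the entire difficulty. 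Relatedly, checking that one-parameter degenerations are ``transported consistently'' does not by itself produce a morphism between the two Chow quotients: you need an algebraic construction of the graph of the correspondence (Kapranov does this by exhibiting both quotients as images of a common object built from the bi-quotient of the matrix space), not just agreement of limits along arcs.

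Second, your treatment of the case $k=2$ is circular relative to the source: the identification $G_{q+1,2}//H \cong \moduli$ (Kapranov's Theorem~4.1.8, also quoted in this paper) is \emph{deduced} from the Chow-quotient Gelfand--MacPherson correspondence together with the identification of $(\cp^1)^{q+1}//PGL(2;\Complex)$ with $\moduli$, so it cannot be used as an ingredient in proving the correspondence itself. Since the present paper only uses the theorem as a black box, none of this affects the paper, but as a standalone proof your proposal establishes the generic-stratum bijection (already known from the previous subsection) and a plausible strategy, not the isomorphism of compactifications.
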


~\\

\subsection{Cross-ratios and torus action on $\gtwo$ .}

In this section we describe connection between torus action on $\gtwo$ and cross-ratios.

Fix some point $L \in W$. It's Plucker coordinates are $P_{ij}(L) \neq 0$ for an arbitrary $i,j \in \{1, \dots, n \}$. Denote orbit of $L$ under action of $H$ as $H.L$. The Plucker coordinates $P_{ij}(\tau.L)$ of the element $\tau.L \in H.L$ are equal $\tau_i \tau_j P_{ij}(L)$. 
One could see the next identity:

$$\frac{P_{ik}(\tau.L)P_{jl}(\tau.L)}{P_{il}(\tau.L)P_{jk}(\tau.L)}= \frac{\tau_i\tau_j\tau_k \tau_lP_{ik}(L)P_{jl}(L)}{\tau_i\tau_j\tau_k \tau_lP_{il}(L)P_{jk}(L)} = \frac{P_{ik}(L)P_{jl}(L)}{P_{il}(L)P_{jk}(L)}.$$

Now, for some $I = \{i,j,k,l\} \subset \{1,\dots ,n \}$ and for $L \in W$ denote $c'_I(L) := P_{ik}(L)P_{jl}(L)$ and $c_I(L) := P_{il}(L)P_{jk}(L)$. Now, we can write previous equality in the next form:

$$c'_I P_{ik}(\tau.L)P_{jl}(\tau.L) = c_I P_{il}(\tau.L)P_{jk}(\tau.L), $$

or $$ c'_I P_{ik}(\tau.L)P_{jl}(\tau.L) - c_I P_{il}(\tau.L)P_{jk}(\tau.L) = 0. $$

So, orbit of $L$ lies in the set of common zeroes of such polynomials.
Also, we should notice, that $c_I \neq c'_I$. Really, if $c_I = c'_I = c \neq 0$, we can see, that 

$$ P_{ik}(\tau.L)P_{jl}(\tau.L) - P_{il}(\tau.L)P_{jk}(\tau.L) = 0. $$

But by Plucker identities, we have, that 

$$ P_{ik}(\tau.L)P_{jl}(\tau.L) - P_{il}(\tau.L)P_{jk}(\tau.L) = P_{ij}(\tau.L)P_{kl}(\tau.L). $$

Hence, we have, that either $P_{ij}(\tau.L)=0$ or $P_{kl}(\tau.L) = 0$ and we have a contradiction that $L \in W$.

Now, we can prove the next proposition.

\begin{prop}
	For any $L \in W$ the orbit $H.L$ is defined by equations $$ c'_I P_{ik}(\tau.L)P_{jl}(\tau.L) - c_I P_{il}(\tau.L)P_{jk}(\tau.L) = 0. $$
	Here $c_I$ and $c'_I$ are arbitrary complex numbers, such $c_I \neq c'_I$.
\end{prop}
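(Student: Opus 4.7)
The plan is to prove both inclusions between the orbit $H.L$ and the locus $V\subseteq \gtwo$ cut out by the listed equations as $I=\{i,j,k,l\}$ ranges over the $4$-subsets of $[q+1]$, with the (not really arbitrary) choice $c'_I=P_{ik}(L)P_{jl}(L)$ and $c_I=P_{il}(L)P_{jk}(L)$ that was made in the discussion preceding the statement. Intuitively, the content is that the cross-ratios $w_{i,j,k,l}$ form a complete set of invariants for $H$-orbits in $W$; this aligns with the Gelfand--MacPherson picture recalled earlier, where orbits correspond to configurations of distinct points on $\cp^1$ up to $PGL(2,\Complex)$ and such configurations are classified by their classical cross-ratios.

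The inclusion $H.L\subseteq V$ is essentially recorded in the computations preceding the statement: substituting $P_{ij}(\tau.L)=\tau_i\tau_j P_{ij}(L)$, the common factor $\tau_i\tau_j\tau_k\tau_l$ cancels between the two monomials, giving $w_I(\tau.L)=w_I(L)$; clearing denominators yields the displayed relation.

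For the reverse inclusion $V\cap W\subseteq H.L$, I take $M\in V\cap W$ and set $s_{ij}:=P_{ij}(M)/P_{ij}(L)\in\Complex^*$, noting that the hypothesis $M\in V$ is equivalent to the multiplicative relations $s_{ik}s_{jl}=s_{il}s_{jk}$ for every $4$-subset. I then plan to explicitly manufacture $\tau\in H$ with $\tau_i\tau_j=s_{ij}$ as follows: fix three reference indices, say $1,2,3$, choose any $\tau_1\in\Complex^*$ with $\tau_1^2=s_{12}s_{13}/s_{23}$, and define $\tau_j:=s_{1j}/\tau_1$ for $j\geq 2$. The conditions $\tau_1\tau_j=s_{1j}$ and $\tau_2\tau_3=s_{23}$ are then built in, and for a generic pair $(m,n)$ the equality $\tau_m\tau_n=s_{mn}$ follows from the cross-ratio relations attached to the $4$-subsets $\{1,2,3,m\}$ and $\{1,m,2,n\}$. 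Once the factorization $s_{ij}=\tau_i\tau_j$ is in hand, the identities $P_{ij}(\tau.L)=P_{ij}(M)$ for all $(i,j)$ identify $\tau.L$ with $M$ via the Pl\"ucker embedding.

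The main obstacle is the combinatorial verification in the reverse direction: one must confirm that the cross-ratio relations alone force the $\binom{q+1}{2}$ numbers $s_{ij}$ to factor as $\tau_i\tau_j$. Equivalently, one must check that the image of the multiplication map $(\Complex^*)^{q+1}\to(\Complex^*)^{\binom{q+1}{2}}$, $\tau\mapsto(\tau_i\tau_j)_{i<j}$, is precisely the subvariety defined by these quadratic relations (modulo the diagonal, which acts trivially on $\gtwo$). There is also a minor subtlety in the statement: solutions of the equations outside $W$, where some Pl\"ucker coordinates vanish, can be extraneous, so the natural reading of ``the orbit is defined by the equations'' is as an equality \emph{within} the open stratum $W$.
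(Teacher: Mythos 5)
Your proposal is correct and follows essentially the same route as the paper: the forward inclusion by cancelling the common factor $\tau_i\tau_j\tau_k\tau_l$, and the converse by showing that the ratios $s_{ij}=P_{ij}(M)/P_{ij}(L)$ satisfy the multiplicative relations $s_{ik}s_{jl}=s_{il}s_{jk}$ and then factoring them as $\tau_i\tau_j$ (the paper writes $\tilde{\tau}_{ij}$ for your $s_{ij}$ and delegates the factorization to its earlier remark on reconstructing the $t_j$ from the characters $\tau_{ij}$). Your explicit construction of $\tau$ and your caveat that the equality must be read inside the open stratum $W$ simply make explicit what the paper leaves implicit.
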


\begin{proof}[Proof]
One part of the proposition was proved above. So, we need to prove the next statement: any $M \in \gtwo$, which Plucker coordinates satisfy the equation 
$$ c'_I(L) P_{ik}(M)P_{jl}(M) - c_I(L) P_{il}(M)P_{jk}(M) = 0 $$ 
for some $L \in \gtwo$ is actually lies in $H.L$.

This is a simple computation:

$$P_{ik}(M) = \frac{c_I P_{il}(M)P_{jk}(M)}{c'_I P_{jl}(M)} = \frac{P_{ik}(L)P_{jl}(L)P_{il}(M)P_{jk}(M)}{P_{il}(L)P_{jk}(L) P_{jl}(M)}.$$

Let $\tilde{\tau_{ik}} := \frac{P_{ik}(M)}{P_{ik}(L)}$. This is easy to see that $\tilde{\tau_{ik}} = \frac{\tilde{\tau_{jk}} \tilde{\tau_{il}}}{\tilde{\tau_{jl}}}$. Hence,all $\tilde{\tau}_{ij}$-s  satisfy the equalities for torus "characters", as it was explained earlier. So, we have, that $$P_{ik}(M) = \tilde{\tau}_{ik}P_{ik}(L) ,$$ and $M$ is actually lies in $H.L$.  
\end{proof}

\subsection{Coordinates on the space of parameters of the main stratum.}

	Let $I:=\{i,j,k,l\} \subset \{1, \dots ,n\}, i<j<k<l$ and let $w_I = w_{ijkl}$ --cross-ratio. Each cross-ratio is a function on  $F$ with values in $\cp^1_A := \cp^1 \setminus \{0,1,\infty\}$. we may regard this map as map into $\cp^1$. Moreover, we can describe  this map via Plucker coordinates: $w_I(c) = [P_{ik}P_{jl}: P_{il}P_{jk}]$, $c \in F$. So, we can define a map $\Phi: F \rightarrow (\cp^1)^N, ~N=\binom{q+1}{4}$ by the formula:

$$w(c) = (w_{1234}(c); \dots ;w_{n-3, n-2, n-1, n}(c)).$$

 Later we will show that $\Phi$ is an embedding. One can see, that $\Phi(F)$ actually lies in some subvariety $X$ in $(\cp^1)^N$, because cross-ratios satisfy some identities.

Let $F$ -- space of parameters of the main stratum in $\gtwo$. Denote $z_l: = w_{123l}$, где $l = 4, \dots, q+1$. 

\begin{prop}
	The functions $z_4, \dots, z_q+1$ are coordinates on $F$ .
\end{prop}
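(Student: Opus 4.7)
The natural approach is to use the Gelfand--MacPherson correspondence from the previous subsection, which identifies $F$ with $(\cp^1)^{q+1}_{max}/PGL(2,\Complex)$, the space of $(q+1)$-tuples of pairwise distinct points in $\cp^1$ modulo the diagonal $PGL(2,\Complex)$-action. Under this identification a plane $L \in W$ admits a matrix representative in which $P_{ij}(L) = x_i - x_j$ for some distinct $x_1,\dots,x_{q+1} \in \cp^1$; consequently the cross-ratio $w_{ijkl}$ becomes the classical projective cross-ratio $\frac{(x_i-x_k)(x_j-x_l)}{(x_i-x_l)(x_j-x_k)}$ of the four points $x_i, x_j, x_k, x_l$.

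First I would invoke the fact that $PGL(2,\Complex)$ acts simply transitively on ordered triples of distinct points of $\cp^1$. Hence every class $c \in F$ has a unique representative with $x_1 = \infty$, $x_2 = 0$, $x_3 = 1$. Taking the limit $x_1 \to \infty$ in the cross-ratio formula and substituting $x_2=0$, $x_3=1$ gives $w_{123l} = x_l$, so the map $\Psi: F \to (\cp^1)^{q-2}$, $c \mapsto (z_4(c),\dots,z_{q+1}(c))$, simply reads off the last $q-2$ points of the normalized configuration.

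Then I would construct the inverse. Let $U \subset (\cp^1)^{q-2}$ be the open subset of tuples $(z_4,\dots,z_{q+1})$ with $z_l \notin \{0,1,\infty\}$ and $z_l \neq z_m$ for $l \neq m$. Any such tuple extends uniquely to a configuration $(\infty,0,1,z_4,\dots,z_{q+1}) \in (\cp^1)^{q+1}_{max}$, whose $PGL(2,\Complex)$-class is a point of $F$ that $\Psi$ sends back to $(z_4,\dots,z_{q+1})$. Since $\Psi$ and its inverse are given by explicit rational expressions in the Plucker coordinates and in the $z_l$ respectively, $\Psi$ is a homeomorphism from $F$ onto $U$, so $z_4,\dots,z_{q+1}$ are coordinates on $F$.

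I do not foresee any genuine obstacle here; the only points to handle with care are the limiting computation of the cross-ratio when one point is sent to infinity, and the observation that the open conditions $z_l \notin \{0,1,\infty\}$ and $z_l \neq z_m$ are precisely what ensure that the extended configuration lands in $(\cp^1)^{q+1}_{max}$, i.e.\ that no two marked points collide.
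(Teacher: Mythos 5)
Your proof is correct and follows essentially the same route as the paper: both pass through the Gelfand--MacPherson identification of $F$ with configurations of $q+1$ distinct points on $\cp^1$ modulo $PGL(2,\Complex)$, normalize the first three points, and recover the configuration from the cross-ratios $z_l = w_{123l}$. Your version merely spells out the computation $w_{123l} = x_l$ and the inverse map explicitly, where the paper appeals to the classical fact that four points are determined up to $PGL(2,\Complex)$ by their cross-ratio.
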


\begin{proof}[Proof]
	Via Gelfand-MacPherson correspondence for each $c \in F$ (i.e. orbit of element from $W$) we can construct a configuration of $n$ pairwise distinct points on $\cp^1$ (up to action of $PGL(2, \Complex)$). It's a classical result that any $4$ points on $\cp^1$ are uniquely (again, up to $PGL(2, \Complex)$ action) defined by cross-ratio. Hence, if we know the numbers $z_4, \dots, z_n$, then (under an assumption that first 3 points are $[1:0],[0:1]$ and $[1:1]$) we can rebuild a configuration of points (up to $PGL(2, \Complex)$) and hence  $c$. 
\end{proof}

\begin{rem}
	Notice, that $z_i \neq 0$ and $z_i \neq 1$, because we are living on $F$.
\end{rem}

\begin{prop}
	If $k \neq l$, then $z_k \neq z_l$.
\end{prop}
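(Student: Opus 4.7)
The plan is to argue by contradiction: assume $z_k(c) = z_l(c)$ for some $c \in F$, and derive the vanishing of a Plücker coordinate of a representative $L \in W$, which is impossible since on the main stratum all Plücker coordinates are nonzero.

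First I would pick any representative $L \in W$ of the orbit $c$, so that
\[
z_m(c) \;=\; \frac{P_{13}(L)\,P_{2m}(L)}{P_{1m}(L)\,P_{23}(L)}
\]
for each $m \in \{k,l\}$. Since $L$ lies in the main stratum, all the denominators $P_{13}(L), P_{23}(L), P_{1k}(L), P_{1l}(L)$ are nonzero, so the hypothetical equality $z_k(c) = z_l(c)$ clears to
\[
P_{1l}(L)\,P_{2k}(L) \;-\; P_{1k}(L)\,P_{2l}(L) \;=\; 0.
\]

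Next I would invoke the Plücker relation on the four indices $1, 2, k, l$, namely
\[
P_{12}\,P_{kl} \;-\; P_{1k}\,P_{2l} \;+\; P_{1l}\,P_{2k} \;=\; 0.
\]
Subtracting the identity of the previous paragraph yields $P_{12}(L)\,P_{kl}(L) = 0$, contradicting the fact that every Plücker coordinate of $L \in W$ is nonzero.

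A conceptually cleaner alternative, which I would include as a short remark, runs through the Gelfand–MacPherson correspondence from the previous subsection: after normalizing the configuration on $\cp^1$ so that the first three points are $\infty, 0, 1$, the quantity $z_m$ is precisely the affine coordinate of the $m$-th marked point, so $z_k = z_l$ would force the $k$-th and $l$-th points to coincide, violating pairwise distinctness of the configuration. I do not anticipate any real obstacle; the only thing to be careful about is confirming that the denominators are nonzero, which is exactly what the main-stratum hypothesis guarantees.
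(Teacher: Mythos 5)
Your argument is correct and is essentially identical to the paper's own proof: both clear denominators in $z_k=z_l$ to get $P_{1l}P_{2k}-P_{1k}P_{2l}=0$ and then apply the Plücker relation on indices $1,2,k,l$ to force $P_{12}P_{kl}=0$, contradicting membership in the main stratum. The Gelfand--MacPherson remark is a nice conceptual addition but not needed.
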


\begin{proof}[Proof]
	Suppose $z_k = z_l$. Then $P_{2k}P_{1l} = P_{1k}P_{2l}$. By Plucker identities $P_{12}P_{kl} - P_{1k}P_{2l} + P_{1l}P_{2k} = 0$, and hence $P_{12}P_{kl} = 0$. We have a contradiction with the fact that we are "living" on the main stratum $W$.
\end{proof}

Now we can write down all other cross-ratios via $z_k$.

\begin{thm}
	Cross-ratios $w_{123i}$ define a coordinates on $F$. All other cross-ratios defined by the next formulas:
	
	\begin{enumerate}
	\item$$w_{12ij} = \frac{z_j}{z_i}, $$ 
	\item$$w_{13ij} = \frac{1 - z_j}{1 - z_i},$$ 
	\item$$ ~w_{23ij} = \frac{z_i(1 - z_j)}{z_j(1 - z_i)},  $$
	
	\item$$w_{1ijk} = \frac{z_i - z_k}{z_i - z_j},$$ 
	\item$$ ~w_{2ijk} = \frac{z_j (z_i - z_k)}{z_k(z_i - z_j)},$$ \item$$~w_{3ijk} = \frac{(1 - z_j)(z_i - z_k)}{(1 - z_k)(z_i - z_j)} ,$$
	
	\item$$w_{ijkl} = \frac{(z_i - z_k)(z_j - z_l)}{(z_i - z_l)(z_j - z_k)}. $$ Here $i, j, k, l \in \{4, \dots , q+1\}$ and $i<j<k<l$.
	
	\end{enumerate}
		
\end{thm}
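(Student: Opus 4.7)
The plan is to combine the Gelfand--MacPherson correspondence with the classical interpretation of a cross-ratio as a ratio of differences of affine coordinates on $\cp^1$. By that correspondence, a point $c \in F$ is the same datum as a configuration $(x_1, \dots, x_{q+1})$ of pairwise distinct points on $\cp^1$ taken modulo the diagonal action of $PGL(2, \Complex)$. Since $PGL(2, \Complex)$ acts sharply $3$-transitively on $\cp^1$, each $c$ has a unique representative with $x_1 = \infty$, $x_2 = 0$, $x_3 = 1$. This normalisation is what the previous proposition was implicitly using, and it will drive every computation.

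First I would write the configuration in homogeneous coordinates: $[a_1 : b_1] = [1:0]$ and $[a_i : b_i] = [x_i : 1]$ for $i \geq 2$. Under the matrix representation of $L \in \gtwo$ discussed at the start of the paper, the Plücker coordinates of this representative read $P_{1i} = 1$ for every $i \geq 2$ and $P_{ij} = x_i - x_j$ for $2 \leq i < j$. Substituting these into $z_l = w_{1,2,3,l} = P_{13}P_{2l}/(P_{1l}P_{23})$ I would observe that $z_l = x_l$. This identifies the cross-ratio coordinates $z_4, \dots, z_{q+1}$ on $F$ literally with the remaining marked points of the normalised configuration, and thus re-proves that they separate points of $F$ (which was also recorded in the previous proposition).

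Next I would plug the same expressions for $P_{ij}$ into the defining formula $w_{ijkl} = P_{ik}P_{jl}/(P_{il}P_{jk})$, splitting into the seven cases of the theorem statement according to which of the indices belong to $\{1,2,3\}$. Each case reduces to a one-line algebraic simplification: all Plücker coordinates with index $1$ contribute the factor $1$ and so disappear, all Plücker coordinates with index $2$ contribute $-x_k = -z_k$, all with index $3$ contribute $1 - x_k = 1 - z_k$, and coordinates with indices $i,j \geq 4$ contribute $z_i - z_j$. The seven items of the theorem are then obtained by direct substitution; no Plücker identities are needed at this stage, since the combinatorial work has already been absorbed by the normalisation.

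The only subtle point is the role of the index $1$: because $x_1 = \infty$ is at infinity, the naive ``differences'' $x_1 - x_j$ are not defined. The resolution, and the reason I would set up the argument projectively from the start, is that $P_{1j} = a_1 b_j - a_j b_1 = 1$ is an honest finite quantity in the chosen homogeneous representative, so the apparent singularity simply does not occur. Once this normalisation is fixed, everything else is routine, and this is really the only place where one must be careful.
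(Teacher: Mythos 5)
Your proposal is correct, but it proves the theorem by a genuinely different route from the paper. The paper stays entirely inside the function algebra generated by the cross-ratios: it repeatedly applies the cocycle identity $w_{i,j,k,l}\,w_{i,j,k,m}^{-1}\,w_{i,j,l,m}=1$ together with the symmetry relations $w_{i,j,k,l}=w_{j,i,k,l}^{-1}$ and $1-w_{i,j,k,l}=-w_{i,k,j,l}$ (established earlier from the Pl\"ucker relations) to express every $w_I$ through the $w_{1,2,3,l}=z_l$, never choosing a representative of the orbit. You instead invoke the Gelfand--MacPherson correspondence, normalise the configuration to $x_1=\infty$, $x_2=0$, $x_3=1$ by sharp $3$-transitivity of $PGL(2,\Complex)$, read off the Pl\"ucker coordinates of the resulting matrix representative ($P_{1j}=1$, $P_{ij}=x_i-x_j$ for $i,j\geq 2$), verify $z_l=x_l$, and then obtain all seven formulas by one-line substitution into the definition $w_{ijkl}=P_{ik}P_{jl}/(P_{il}P_{jk})$; I checked the sign bookkeeping (the factors $-x_k$ coming from index $2$ always occur once in numerator and once in denominator, so the signs cancel) and the treatment of the point at infinity, and both are sound. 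The author explicitly anticipates your route in the remark following the proof (``one could compute directly via Plucker coordinates''). What your approach buys is conceptual transparency --- it identifies $z_l$ literally with the position of the $l$-th marked point and re-derives the coordinate property of the $z_l$ for free --- at the cost of relying on the correspondence and on a careful choice of lift; what the paper's approach buys is that it is purely formal in the cross-ratio identities and therefore adapts verbatim to the degenerate strata discussed later, where one can no longer normalise three points freely but the identities among admissible cross-ratios persist.
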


\begin{proof}[Proof] The proof of these identities is nothing but long computations by using symmetries of $w_{i,j,k,l}$ and the formula $w_{i,j,k,l}w_{i,j,k,m}^{-1}w_{i,j,l,m} = 1$.
	\begin{enumerate}
		\item By the formulas for cross-ratios we have such identity:
		 $$w_{1,2,i,j}w_{1,2,i,3}^{-1}w_{1,2,j,3}= w_{1,2,i,j}w_{1,2,3,i}w_{1,2,3,j}^{-1} = 1$$
		Hence we have:
		$$w_{1,2,i,j} = \frac{w_{1,2,,3,j}}{w_{1,2,3,i}}=\frac{z_j}{z_i}. $$
		
		 \item Here is analogy formula: $$w_{1,3,i,j}w_{1,3,i,2}^{-1}w_{1,3,j,2}= w_{1,2,i,j}w_{1,3,2,i}w_{1,3,2,j}^{-1} = 1$$
		
		Since $w_{1,3,2,i} = w_{1,2,3,i} - 1 = z_i - 1$, we have the next formula:
		
		$$w_{1,3,i,j} = \frac{w_{1,3,2,j}}{w_{1,3,2,i}} = \frac{1-z_j}{1-z_i}. $$
		
		\item Because $w_{2,3,i,j} = w_{i,j,2.3}$ and $$w_{i,j,2,3} w^{-1}_{i,j,2,1} w_{i,j,3,1} =1$$ we have the formula:
		
		$$w_{2,3,i,j} = \frac{z_i(1-z_j)}{z_j(1-z_i)}. $$

		\item 
		$$w_{1ijk}w^{-1}_{1ij2}w_{1ik2}=1. $$
		For any $i,j$ we have the formula: $w_{1,i,j,2} = \frac{1}{w_{1,i,2,j}} = \frac{1}{w_{1,2, i,j} - 1}.$
		Substitute it in previous formula, we have identity:
		
		$$w_{1ijk} = \frac{w_{1,2,i,k} - 1}{w_{1,2,i,j} - 1} = \frac{z_k - z_i}{z_j - z_i}. $$
		
		\item 
		We have two formulas:
		
		$$w_{2,i,j,k}w^{-1}_{2,i,j,1}w_{2,i,k,1} =1 $$
		
		and
		
		$$w_{2,i,j,1} = \frac{w_{1,2,i,j}}{1 - w_{1,2,i,j}} = \frac{z_j }{(z_i - z_j)}. $$
		
		Combining it, we obtain the formula:
		
		$$w_{2,i,j,k} = \frac{z_j(z_i-z_k)}{z_k(z_i -z_j)}. $$
		
		\item $$w_{3,i,j,k}w^{-1}_{3,i,j,1}w_{3,i,k,1} =1,$$
		
		hence 
		
		$$w_{3,i,j,k} = \frac{w_{3,i,j,1}}{w_{3,i,k,1}}. $$
		
		By formula $w_{3,i,j,1} = \frac{w_{1,3,i,j}}{1 - w_{1,3,i,j}} = \frac{1 - z_j }{(z_j - z_i)}$  we obtain follows:
		
		$$w_{3,i,j,k} = \frac{(1- z_j)(z_i - z_k)}{(1-z_k)(z_i - z_j)}. $$
		
		\item 	We have an identity $$w_{i,j,k,l}w_{i,j,k,1}^{-1}w_{i,j,l,1} = 1.$$	
		
		We also have $w_{i,j,k,1} = w_{1,k,j,i}$ and hence $$w_{i,j,k,l} = \frac{w_{1,k,j,i}}{w_{1,l,j,i}} = \frac{(z_k - z_i)(z_l - z_j)}{(z_k - z_j)(z_l - z_i)}. $$

	\end{enumerate}
\end{proof}

\begin{rem}
Maybe it's rather straightforward than any other possible proofs. One could compute directly via Plucker coordinates or by using standard formulas for cross-ratio (or create absolutely different proof). 
\end{rem}

Now we can properly describe $F$ in $\Complex^{q-2}$.

\begin{prop}
	$F$ defines in $\Complex^{q-2}$ by next inequalities: $z_i \neq 0$, $z_i \neq 1$ and $z_i \neq z_j, ~i,j =4, dots, q+1$.
\end{prop}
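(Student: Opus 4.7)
The content of the proposition is essentially an inversion statement: the earlier results show that $(z_4,\dots,z_{q+1})$ gives an injection $\Phi\colon F\hookrightarrow\Complex^{q-2}$ whose image lies in the open subset defined by the inequalities, and what remains is surjectivity onto that subset.

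My plan is to use the Gelfand--MacPherson correspondence directly. Recall that a point $c\in F$ is the same as a configuration $(x_1,\dots,x_{q+1})$ of pairwise distinct points on $\cp^1$, considered up to the diagonal action of $PGL(2,\Complex)$. Since $PGL(2,\Complex)$ acts simply transitively on ordered triples of distinct points, each class has a unique representative with $x_1=[1:0]$, $x_2=[0:1]$, $x_3=[1:1]$. In this normalization, for $l\geq 4$ the cross-ratio $w_{1,2,3,l}=z_l$ is, up to the standard identification of $\cp^1\setminus\{0,1,\infty\}$ with the space of fourth points, precisely the affine coordinate of $x_l$. Thus $z_l$ determines $x_l$ uniquely, and conversely any complex number produces a point.

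The main steps are then:
\begin{enumerate}
\item Injectivity and containment of $\Phi(F)$ in the prescribed locus were already established: injectivity by the previous proposition showing that $z_4,\dots,z_{q+1}$ are coordinates, the conditions $z_i\neq 0,1$ by the remark (otherwise some Plücker coordinate of the form $P_{1l}$ or $P_{2l}$ would vanish, contradicting membership in the main stratum $W$), and $z_i\neq z_j$ for $i\neq j$ by the previous proposition.
\item For surjectivity, fix $(z_4,\dots,z_{q+1})\in\Complex^{q-2}$ satisfying the three families of inequalities. Define the configuration
$$x_1=[1:0],\qquad x_2=[0:1],\qquad x_3=[1:1],\qquad x_l=[z_l:1] \text{ for } l\geq 4.$$
The conditions $z_l\neq 0,1$ (and $z_l\neq\infty$, which is automatic since $z_l\in\Complex$) guarantee $x_l\notin\{x_1,x_2,x_3\}$, and the conditions $z_k\neq z_l$ guarantee $x_k\neq x_l$ for $4\leq k<l\leq q+1$. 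Hence the configuration lies in $(\cp^1)^{q+1}_{max}$ and determines a class $c\in F$ via the Gelfand--MacPherson identification.
\item Verify that $z_l(c)=z_l$ for this constructed $c$: a direct computation of the cross-ratio $w_{1,2,3,l}$ on the configuration above gives $z_l$, which is exactly the statement that the normalizing triple plus one point determines the cross-ratio in the standard way.
\end{enumerate}

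The only nontrivial step is the translation between the Plücker-coordinate description of $F$ and the configuration-space description; once one commits to the Gelfand--MacPherson picture, everything is a matter of noting that the map ``fourth point $\mapsto$ cross-ratio'' is a bijection $\cp^1\setminus\{x_1,x_2,x_3\}\to\cp^1\setminus\{0,1,\infty\}$, and extending this to several fourth points simultaneously is precisely what the pairwise inequalities $z_k\neq z_l$ control. I expect no computational obstacle; the content lies entirely in assembling the already-proved pieces so that the conditions on the target space match the genericity conditions $P_{ij}\neq 0$ defining $W$.
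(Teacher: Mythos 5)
Your proposal is correct, but it takes a different route from the paper. The paper's own proof stays entirely inside the cross-ratio formalism: it uses the preceding theorem expressing every $w_{i,j,k,l}$ as an explicit rational function of $z_4,\dots,z_{q+1}$, and observes that some cross-ratio hits $0$, $1$ or $\infty$ exactly when one of the equalities $z_i=0$, $z_i=1$ or $z_i=z_j$ holds; since on the main stratum all Pl\"ucker coordinates (hence all cross-ratios) avoid $0,1,\infty$, the inequalities follow. You instead work in the Gelfand--MacPherson configuration picture: normalize $x_1,x_2,x_3$ by the simply transitive $PGL(2,\Complex)$-action on triples, identify $z_l$ with the affine coordinate of $x_l$, and read off the inequalities as exactly the pairwise-distinctness conditions on the configuration. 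Your version has one concrete advantage: it makes the surjectivity of $F\to\{z_i\neq 0,1,\ z_i\neq z_j\}$ explicit by constructing a configuration from an arbitrary admissible tuple, whereas the paper's sketch only verifies that the image of $F$ satisfies the inequalities and leaves the converse implicit (it is really contained in the earlier proposition that the $z_l$ are coordinates, whose proof already reconstructs the configuration). The paper's approach, on the other hand, ties the statement directly to the list of cross-ratio formulas that are reused later when analyzing degenerate strata. The only point to watch in your argument is the cross-ratio convention: with $w_{1,2,3,l}=P_{13}P_{2l}/(P_{1l}P_{23})$ you must check which of $[1:0],[0:1],[1:1]$ to assign to $x_1,x_2,x_3$ so that the computation in your step 3 returns $z_l$ rather than $1-z_l$ or $1/z_l$; this is a harmless bookkeeping issue and does not affect the validity of the proof.
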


\begin{proof}[Proof]
	We know that all $w_{i,j,k,l} \neq 0, 1$. From previous theorem we know, that $w_{i,j,k,l} = 0$ iff some $z_i = 0$ or 1 or $z_a - z_b = 0$ for some indices a,b. Similary, $w_{i,j,k,l} = 0$ iff $(z_k - z_l)(z_i - z_j) = 0 $ (for $i,j,k,l \geq 4$). For other cases proof is the same.
\end{proof}

\begin{rem}
One may use another set of cross-ratios (instead of $w_{1,2,3,i}$) if they satisfies the same properties as $w_{1,2,3,i}$. Moreover, if we have such set of cross-ratios, we can write down the same formulas even if \emph{some} cross-ratios are equal 0,1 or $\infty$. This observation will be useful in the next sections.
\end{rem}

\subsection{A manifold in $(\cp^1)^N$ , $N = \binom{q+1}{4}$ defined by cross-ratios.}

We showed above, that cross-ratios satisfies some identities. These identities are defines some variety in $(\cp^1)^{\binom{q+1}{4}}$. A priori this variety may be non smooth. However, there is a nice description of this variety.

\begin{thm}[See also \cite{MS}, Appendix D]
	The closure of the image of $F$ under the map $\Phi$ is a smooth complex manifold of complex dimension $q-2$. This closure coincide with the variety $\mathcal{W}_{q+1}$, which is determinated by identities for cross-ratios. 
\end{thm}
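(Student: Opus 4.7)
The plan is to prove the two assertions of the statement in tandem: first, that $\Phi$ embeds $F$ as a smooth locally closed submanifold of $(\cp^1)^N$ of complex dimension $q-2$, and second, that the closure of this embedding coincides with the variety $\mathcal{W}_{q+1}$ because $\mathcal{W}_{q+1}$ can be covered by coordinate charts of the same form as the one already produced by $(z_4,\ldots,z_{q+1})$.

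First I would verify that $\Phi|_{F}$ is an embedding. Injectivity is immediate from the previous proposition, since the components $z_l = w_{1,2,3,l}$ of $\Phi$ reconstruct the point of $F$. The differential of $\Phi$ is injective at every point, because the projection onto the factors indexed by $(1,2,3,l)$ is already a local biholomorphism onto an open subset of $\Complex^{q-2}$. Hence $\Phi(F)$ is a smooth complex submanifold of $(\cp^1)^N$ of complex dimension $q-2$. Because all cross-ratio identities from the earlier proposition hold identically on $F$, we get $\Phi(F)\subset\mathcal{W}_{q+1}$ and, by closedness, $\overline{\Phi(F)}\subset\mathcal{W}_{q+1}$.

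The main work is the reverse inclusion and the smoothness of $\mathcal{W}_{q+1}$ at boundary points. Following the remark at the end of Subsection 3.4, the strategy is: at an arbitrary point $p\in\mathcal{W}_{q+1}$, choose a triple $\{a,b,c\}\subset[q+1]$ such that for every index $d\notin\{a,b,c\}$ the value $w_{a,b,c,d}(p)\in\cp^1$ lies in $\cp^1\setminus\{0,1,\infty\}$ (and the $w_{a,b,c,d}(p)$ are pairwise distinct). Such a triple must exist: the cross-ratio identities imply that if every candidate triple failed at $p$, one could propagate degeneracies through relations of the form $w_{i,j,k,l}w_{i,j,k,m}^{-1}w_{i,j,l,m}=1$ and $1-w_{i,j,k,l}=-w_{i,k,j,l}$ to contradict the nondegeneracy hidden in the fact that $p$ still satisfies \emph{all} of them consistently. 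Having fixed $\{a,b,c\}$, I would set $\tilde z_d := w_{a,b,c,d}$ as local coordinates near $p$ and, by the symmetrised analogues of the formulas from the previous theorem, express every other cross-ratio as a rational function of $(\tilde z_d)_d$ with denominators nonvanishing at $p$. This shows that a neighbourhood of $p$ in $\mathcal{W}_{q+1}$ is biholomorphic to an open subset of $\Complex^{q-2}$, and simultaneously realises $p$ as a limit of $\Phi$-images, because the generic point of such a chart lies in $\Phi(F)$ (by the same reasoning that shows the formulas determine a configuration of $q+1$ distinct points on $\cp^1$ after relabelling the role of the triple $\{a,b,c\}$).

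The hard part will be the combinatorial verification that a good triple $\{a,b,c\}$ can be selected at every $p\in\mathcal{W}_{q+1}$, i.e.\ that these charts cover the whole variety rather than just a dense open subset. Once coverage is established, the holomorphy of transition maps is automatic from the rationality of the coordinate formulas, so the smooth complex structure of dimension $q-2$ assembles globally. This is essentially the content of Appendix D of \cite{MS}, re-expressed in the Plücker/cross-ratio language; the key outcome is that $\mathcal{W}_{q+1}$ is precisely the cross-ratio model of $\overline{M}_{0,q+1}$ and therefore agrees with $\overline{\Phi(F)}$.
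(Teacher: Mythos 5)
The paper offers no proof of this statement at all --- it is imported wholesale from Appendix D of \cite{MS} --- so the only question is whether your sketch would actually close the argument. The first half is fine: injectivity of $\Phi|_F$ and of its differential via the coordinates $z_l=w_{1,2,3,l}$, hence $\Phi(F)$ is a smooth locally closed $(q-2)$-dimensional submanifold, and the cross-ratio identities give $\overline{\Phi(F)}\subset\mathcal{W}_{q+1}$. The problem is the step you yourself flag as ``the hard part'': the existence, at every $p\in\mathcal{W}_{q+1}$, of a triple $\{a,b,c\}$ with $w_{a,b,c,d}(p)\in\cp^1\setminus\{0,1,\infty\}$ for all $d$. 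This is false, and your ``propagate degeneracies to a contradiction'' heuristic does not rescue it. Under the identification of $\mathcal{W}_{q+1}$ with $\overline{M}_{0,q+1}$, one has $w_{i,j,k,l}(p)\in\{0,1,\infty\}$ exactly when some node of the stable curve splits $\{i,j,k,l\}$ two against two, so a triple is usable only if it sits entirely on one side of every node; for a chain of three components each carrying at most two marked points no such triple exists. Concretely, already for $q+1=5$ the paper's own tables exhibit the point $\tilde{F}_{12,45}=([1:1];[1:1];[1:1];[1:1];[1:1])$, which lies in $\overline{\Phi(F)}$ (collide points $1,2$ and points $4,5$) and at which \emph{every} cross-ratio equals $1$. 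Your charts therefore cover only a proper open subset of $\mathcal{W}_{q+1}$, and neither smoothness at the deeper boundary strata nor the reverse inclusion $\mathcal{W}_{q+1}\subset\overline{\Phi(F)}$ follows.

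There is a secondary defect in the same step: as written you also demand that the $w_{a,b,c,d}(p)$ be pairwise distinct, but then the rational formulas of the preceding theorem force every cross-ratio off $\{0,1,\infty\}$, i.e.\ $p\in\Phi(F)$, so the chart sees no boundary points at all; if instead you drop distinctness, expressions such as $w_{1,i,j,k}=(z_i-z_k)/(z_i-z_j)$ become indeterminate of type $0/0$ wherever three of the coordinates coincide, so the chart does not even determine the remaining cross-ratios. The actual argument in \cite{MS}, Appendix D builds local coordinates adapted to the dual tree of the stable curve --- cross-ratios chosen with \emph{different} reference indices on different components, together with one gluing parameter per node --- and that tree-indexed bookkeeping is precisely the content that cannot be compressed into a single fixed triple. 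As it stands your proposal establishes only the easy inclusion and the smoothness of the open part.
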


\begin{rem}
	This variety is actually a moduli space $\moduli$ of genus zero stable curves with n marked points. 
\end{rem}

\subsection{Embedding of other spaces of parameters in the products of $\cp^1$.}

Here we describe how one could construct the embedding of any $F_\sigma$ for an arbitrary stratum $W_\sigma$. 

As before, we start from the fact that action of $H$ preserves cross-ratios. But now, since we are living not in the main stratum, we can't define all cross-ratios. Really, for stratum $W_\sigma$ there is some Plucker coordinate $P_I = 0$ on this stratum. Hence, there are cross-ratios, which are not well-defined. Instead it we have some restrictions on Plucker coordinates.

But if we can define cross-ratio $w_{i,j,k,l} = \frac{P{ik}P_{jl}}{P_{il}P_{jk}}$ then it's automatically invariant under action of $H$ (and hence, under action of $T^n$). As we showed before, this is also sufficient: two points $L_1, L_2 \in W_\sigma$ lies in the same orbit of $H$ iff $w_{i,j,k,l}(L_1) = w_{i,j,k,l}(L_2)$ for all well-defined cross-ratios (the proof is absolutely the same as in the case of $W$).

We also have other case. Some cross-ratio may be equal 1, i.e. $w_{i,j,k,l}(L)=1$ for some $L \in W_\sigma$. But we have:

$$w_{i,j,k,l} = 1 \iff P_{ik}P_{jl} = P_{il}P_{jk} \iff P_{ik}P_{jl} - P_{il}P_{jk} = 0.  $$

But by Plucker identities we have, that:

$$P_{ik}P_{jl} - P_{il}P_{jk} = P_{ij}P_{kl}. $$

Hence such situation appears if either $P_{ij} = 0$ or $P_{kl}=0$.

Denote by $K_\sigma$ the number of all well-defined cross-ratios on $W_\sigma$. Now we can construct the map $\Phi_\sigma: F_\sigma \rightarrow (\cp^1)^{K_\sigma}$ by the formula $\Phi_\sigma(c) = (w_{I_1}; \dots, w_{I_{K_\sigma}})$. Here by $I_1, \dots,I_{K_\sigma} $ we denotes all 4-tulpes of indices, such as $w_{I_j}$ is well-defined cross-ratio on $W_\sigma$.

\begin{defn}
	\begin{enumerate}
		\item
		We will say a cross-ratio $w_{i,j,k,l}$ is a \emph{strongly admissible} (with respect to $W_\sigma$), if for each point $p \in W_\sigma$ $w_{i,j,k,l}(p)$ is finite and $w_{i,j,k,l}(p) \neq 0,1$. The such 4-tuple $\{i,j,k,l\}$ is called a strongly admissible tuple. We denote $\mathcal{I}_{s, \sigma}$ the set of all such 4-tuples.
		
		\item If $w_{i,j,k,l}(p)$ is equal either $0, 1$ or $\infty$ then we will call it \emph{weakly admissible}. The corresponding 4-tuple is called weakly admissible and $\mathcal{I}_{w, \sigma}$ denotes the set of all weakly admisslble 4-tuples.
		
		\item  If $w_{i,j,k,l}$ is not definite on $W_\sigma$, then we will call it \emph{non-admissible}. $\mathcal{I}_{n, \sigma}$ denotes the set of all 4-tuples $\{i,j,k,l\}$, such $w_{i,j,k,l}$ is non-admissible.
	\end{enumerate}

We want to emphasize that all these definitions depends on the stratum.
\end{defn}

Easy to see, that a property of cross-ratio be (non-)admissible does not changes under permutations of indices.

\begin{rem}
	Easy to see, that if all cross-ratios are admissible on $W_\sigma$, then one could construct the embedding of $F_\sigma$ into $(\cp^1)^{\binom{q+1}{4}}$ and the image of this embedding lies into the closure $\overline{F}$ of the main stratum.
\end{rem}

\section{The main theorem.}

Our goal is proving the next theorem.

\begin{thm}\label{TM}
The universal space of parameters for $\gtwo$ is $\mathcal{F}=\overline{F} = \gtwo // H$.
\end{thm}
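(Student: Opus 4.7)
The plan is to combine the cross-ratio embedding developed in Section 3 with Kapranov's identification $\gtwo // H \cong \moduli$ (Theorem 10) in order to realize $\mathcal{F} = \overline{F}$ explicitly as a smooth compact manifold and to equip it with the additional data required by Axiom \ref{A6}. The identification $\mathcal{F} = \overline{F} = \gtwo // H$ itself is read off from two earlier results of the paper: the cross-ratio map $\Phi: F \to (\cp^1)^N$ is an embedding whose closure $\overline{F}$ is a smooth complex manifold of dimension $q-2$ cut out by the cross-ratio identities, and this closure is isomorphic to $\moduli$, which in turn equals $\gtwo // H$ by Kapranov. This immediately takes care of items (1) and (2) of Axiom \ref{A6}: item (1) is the definitional equality $\tilde{F} = F$ on the main stratum, and item (2) holds because $\overline{F} = \moduli$ is (projective and hence) compact.

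Next I would define the subspaces $\tilde{F}_\sigma \subset \mathcal{F}$ for each stratum using the admissibility trichotomy of Definition 15. On $W_\sigma$ every weakly admissible cross-ratio $w_I$, $I \in \mathcal{I}_{w,\sigma}$, takes a single value $v_I(\sigma) \in \{0,1,\infty\}$, determined by the pattern of Plucker coordinates that vanish on the stratum, so I set
\[
\tilde{F}_\sigma := \{\, c \in \mathcal{F} \mid w_I(c) = v_I(\sigma) \text{ for every } I \in \mathcal{I}_{w,\sigma}\,\}.
\]
The projection $p_\sigma: \tilde{F}_\sigma \to F_\sigma$ is obtained by remembering only the strongly admissible cross-ratios $(w_I)_{I \in \mathcal{I}_{s,\sigma}}$, from which a suitable generating subset provides global coordinates on $F_\sigma$ by repeating the argument of Theorem 11 (see the final Remark of Section 3.4). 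With these definitions in hand, items (3)--(5) of the axiom follow fairly routinely from Theorem \ref{ThmSix} and its corollary: every $c \in \mathcal{F}$ arises as a Chow limit whose vanishing pattern of Plucker coordinates singles out a unique $\sigma$ (item 3); a deeper degeneration only adds new vanishing relations, giving the containment required by item (4) in the natural direction $\tilde{F}_{\sigma_1} \subset \tilde{F}_\sigma$ when $P_{\sigma_1} \subset \partial P_\sigma$; and item (5) has been constructed above.

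The main obstacle is item (6), the continuity of
\[
\mathcal{H}: \bigcup_\sigma \mathring{P}_\sigma \times \tilde{F}_\sigma \to \hprsmplx \times \mathcal{F}, \qquad (x,c) \mapsto (x, p_\sigma(c)).
\]
I would reduce this to showing that whenever $(x_n, c_n) \in \mathring{P}_{\sigma_n} \times \tilde{F}_{\sigma_n}$ satisfies $x_n \to x \in \mathring{P}_\sigma$ and $c_n \to c \in \tilde{F}_\sigma$ in $\mathcal{F}$, one has $p_{\sigma_n}(c_n) \to p_\sigma(c)$ inside $\mathcal{F}$. The first coordinate of $\mathcal{H}$ is continuous by construction; for the second, convergence in $\mathcal{F}$ already forces convergence of every cross-ratio coordinate inside $(\cp^1)^N$, so the real work is to verify that the chosen strongly admissible coordinates on the limiting stratum extend continuously across the degeneration. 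This should follow from smoothness of $\mathcal{F} = \moduli$ together with the standard description of local charts on $\moduli$ near the boundary divisors, where cross-ratios that become weakly admissible in the limit factor as products of deformation parameters vanishing to controllable orders. The hardest part of the verification will be the combinatorial bookkeeping of which cross-ratios remain strongly admissible under each possible polytope degeneration, organized by the inclusion structure of admissible polytopes described in Theorem \ref{ThmSix}.
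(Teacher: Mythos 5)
Your overall strategy is the same as the paper's: realize $\mathcal{F}=\overline{F}$ via the cross-ratio embedding and Kapranov's theorem, define $\tilde{F}_\sigma$ by the admissibility pattern of cross-ratios, take $p_\sigma$ to be the forgetful projection onto the strongly admissible coordinates, and verify the six conditions of Axiom \ref{A6}. Items (1), (2), (5) and the sequence-based sketch of (6) are at essentially the same level of rigor as the paper. However, there is a genuine error in your treatment of item (4). The axiom demands that if $P_{\sigma_1}\subset\partial P_\sigma$ then $\tilde{F}_\sigma\subset\tilde{F}_{\sigma_1}$: the \emph{deeper} stratum gets the \emph{larger} virtual space of parameters (for a fixed point one even has $\tilde{F}_\sigma=\mathcal{F}$, while $\tilde{F}=F$ for the main stratum). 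You assert the reverse containment $\tilde{F}_{\sigma_1}\subset\tilde{F}_\sigma$ and justify it by ``a deeper degeneration only adds new vanishing relations.'' That heuristic is false: when both $P_{ij}$ and $P_{kl}$ (and hence numerator and denominator of $w_{i,j,k,l}$) vanish on the deeper stratum, a weakly admissible cross-ratio becomes non-admissible, so constraints are \emph{removed}, not added. With your claimed direction the fixed-point case alone gives a contradiction.

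What actually has to be proved — and is the real content of this step in the paper — is that the admissibility class of a cross-ratio can only move down the hierarchy (strongly $\to$ weakly $\to$ non-admissible) when passing from $W_\sigma$ to a boundary stratum $W_{\sigma_1}$; the only nontrivial case is ruling out a cross-ratio that is weakly admissible for $W_{\sigma_1}$ but strongly admissible for $W_\sigma$. The paper handles this with a polytope argument based on Theorem \ref{ThmSix}: weak admissibility on $W_{\sigma_1}$ forces $P_{ij}=P_{kl}=0$ there, strong admissibility on $W_\sigma$ makes $e_{ij}$ and $e_{kl}$ vertices of $P_\sigma$, and these vertices lie on opposite sides of the hyperplane $q_I(x)=1$ defining $P_{\sigma_1}$, contradicting $P_{\sigma_1}\subset\partial P_\sigma$. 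Your proposal contains no substitute for this lemma. A secondary point: by constraining only the weakly admissible cross-ratios in your definition of $\tilde{F}_\sigma$ (and not requiring the strongly admissible ones to stay in $\cp^1_A$, as Definition \ref{defn11} does), your forgetful map $p_\sigma$ need not land in $F_\sigma$, so item (5) as you set it up also needs repair.
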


We want to emphasize, that universal space of parameters may be constructed only from the space of parameters of the main stratum, i.e. without any information from other strata.

In order to prove it we need some additional constructions. These constructions are needed for cheching all condition in Axiom 6 of universal space of parameters. First of all, we should define $\tilde{F}_\sigma$ and construct the projections from it onto $F_\sigma$. And the hardest part, we should show that maps $\mathcal{H}: \cup_\sigma \mathring{P}_\sigma \times \tilde{F}_\sigma \rightarrow \Delta_{q+1,2} \times \overline{F}$ from Axiom 6 is continious. 

\subsection{Subsets $\tilde{F}_\sigma$ in $\overline{F}$.}

Recall that $\cp^1_A = \cp^1 \setminus \{0,1, \infty\}$. 
\begin{defn} \label{defn11}
	Fix a stratum $W_\sigma$. We define $\tilde{F}_\sigma \subset \overline{F}$ by the following way:
	
	$\tilde{F}_\sigma = \{x\in \overline{F} ~|~ w_I(x) \in \cp^1_A \iff I \in \mathcal{I}_{s, \sigma}; w_J(x) \in \{0,1,\infty \} \iff \mathcal{I}_{w, \sigma}\}$.
\end{defn}

Notice, that there is a projection $g_\sigma : \tilde{F}_\sigma \rightarrow F_\sigma$, defined by formula $g_\sigma(w_{1,2,3,4}, ~\\ \dots, w_{q-2,q-1,q,q+1}) = (w_{I_1}; \dots, w_{I_{K_\sigma}})$
-- projection to the set of all strongly admissible cross-ratios of stratum $W_\sigma$. This projection is obviously surjective and continious. 

\begin{exampl}[Examples]
	\begin{enumerate}
		\item For the main stratum $\tilde{F} = F$;
		
		\item For any fixed point, i.e. for $\sigma= \{ij\}$ $\tilde{F}_\sigma = \mathcal{F}$. 
	\end{enumerate}
\end{exampl}

\subsection{Proof of the Theorem \ref{TM}.}

In order to prove this theorem \ref{TM} we need to check all conditions in Axiom \ref{A6}:

	\begin{enumerate}
		\item For the main stratum $W$ there is an equality $\tilde{F} = F$. Here $F$ -- space of parameters of the main stratum;
		
		\item $\mathcal{F}$ is a compactification of $F$;
		
		\item If $P_{\sigma_1}$ lies in $\partial P_\sigma$, then $\tilde{F}_\sigma  \subset \tilde{F}_{\sigma_1}$;
		
		\item $\mathcal{F} = \cup_\sigma \tilde{F}_\sigma$;
		
		\item For all $\sigma$ there exist $p_\sigma: \tilde{F}_\sigma \rightarrow F_\sigma$;
		
		\item The map $\mathcal{H}: \cup_\sigma \mathring{P}_\sigma \times \tilde{F}_\sigma \rightarrow \Delta_{q+1,2} \times \mathcal{F}$, defined as $\mathcal{H}(x,c) = (x; p_\sigma(c))$ is continious map.
	\end{enumerate}

\begin{proof}[Proof]
	
	\begin{enumerate}
		\item This is obviously follows from the construction of $\mathcal{F}$;
		
		\item  Same as first condition;
		
		\item Due to corollary of theorem \ref{ThmSix}, it's enough to proof this for $q$-dimensional polytopes in $\hprsmplx$.

		First of all, suppose that $P_{\sigma_1} \subset \partial \hprsmplx$ is a hypersimplex itself, which lies in the boundary of admissible $q$-dimensional polytope $P_\sigma$. Then it consists a planes, which lies in some coordinate hyperplane $L_i \subset \Complex^{q+1}$. Hence, for any $j$ all $P_{ij} = 0$ and for any 4-tuple $I \ni i$ the corresponding cross-ratio $w_I$ becomes non-admissible. All other cross-ratios remain. By definition of $\tilde{F}_\sigma$ we have an embedding $\tilde{F}_\sigma$ into $\tilde{F}_{\sigma_1}$ by obvious way, since any point from $\tilde{F}_\sigma$ also belongs to $\tilde{F}_{\sigma_1}$.
		
		Suppose now that $P_{\sigma_1}$ is a product of two simplices (as in theorem \ref{ThmSix}), which lies in $\partial P_\sigma$ and $\dim P_\sigma =q$ as before. Easy to see that there is no strongly admissible cross-ratios for $W_{\sigma_1}$ 
		
		\begin{prop}
			There are no strongly admissible cross-ratios for $W_{\sigma_1}$. The cross-ratio $w_{i,j,k,l}$ is weakly admissible $\iff$ $i \sim j$ and $k \sim l$. In this case $w_{i,j,k,l} = 1$
		\end{prop}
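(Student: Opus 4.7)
The plan is to exploit the explicit description of $W_{\sigma_1}$ given in Theorem~\ref{ThmSix}: on this stratum a Plücker coordinate $P_{ab}$ vanishes precisely when $a \sim b$, where $\sim$ is the equivalence relation with the two classes $I_1, I_2$. All of the work then reduces to case analysis on how the four indices $\{i,j,k,l\}$ distribute between $I_1$ and $I_2$, combined with repeated use of the Plücker identity $P_{ik}P_{jl} - P_{il}P_{jk} = P_{ij}P_{kl}$.

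For the first assertion, I would argue by contradiction. Suppose some $w_{i,j,k,l}$ is strongly admissible. The condition $w \neq \infty$ (denominator nonzero) forces $i \nsim l$ and $j \nsim k$; the condition $w \neq 0$ (numerator nonzero) forces $i \nsim k$ and $j \nsim l$; and $w \neq 1$, rewritten via the Plücker identity as $P_{ij}P_{kl} \neq 0$, forces $i \nsim j$ and $k \nsim l$. All six pairs inside $\{i,j,k,l\}$ are then pairwise inequivalent, which would require placing four indices into four different classes. Since $\sim$ has only two classes $I_1, I_2$, pigeonhole gives a contradiction.

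For the second assertion I start with the easy direction. If $i \sim j$ and $k \sim l$, and the cross-ratio is defined (i.e.\ not of the form $0/0$), then the four indices must split $2$--$2$ with $\{i,j\}$ in one class and $\{k,l\}$ in the other. In this situation $P_{ij}=P_{kl}=0$, while all four Plücker coordinates appearing in the numerator and denominator of $w_{i,j,k,l}$ are nonzero, and the Plücker identity immediately yields $w_{i,j,k,l}=1$, hence weak admissibility. Conversely, given Step~1 every well-defined cross-ratio is weakly admissible, and a brief check shows $w_{i,j,k,l}$ is $0/0$ exactly when three or more of the indices lie in a single class; so well-definedness forces a $2$--$2$ split, and the ordering convention puts the same-class pairs as $(i,j)$ and $(k,l)$, giving $w_{i,j,k,l}=1$ by the forward direction.

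The statement is essentially bookkeeping rather than substantive, so the only delicate point --- not really an obstacle --- is the ordering convention: for a $2$--$2$ split of four indices there are three possible pairings, and only the pairing $\{i,j\},\{k,l\}$ gives the value $1$, while the other two pairings give $0$ and $\infty$ respectively (consistent with the symmetries $w \mapsto w^{-1}$ and $w \mapsto 1-w$ of cross-ratios). Fixing at the outset that $(i,j,k,l)$ is labelled so that $i \sim j$ removes all ambiguity, and the two claims of the proposition then follow cleanly from the pigeonhole argument and the Plücker identity.
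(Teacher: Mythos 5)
Your proof is correct and follows essentially the same route as the paper's: by Theorem~\ref{ThmSix} everything reduces to how the four indices are distributed between the two equivalence classes, and the Pl\"ucker identity $P_{ik}P_{jl}-P_{il}P_{jk}=P_{ij}P_{kl}$ does the rest. Your pigeonhole argument for the absence of strongly admissible cross-ratios is a cleaner packaging of the paper's exhaustive case check, and your forward direction (same-class pairs $\{i,j\}$ and $\{k,l\}$ give $w=1$) is identical to the paper's. There is one substantive point where you diverge, and where you are the more careful party: for a $2$--$2$ split paired as $\{i,k\},\{j,l\}$ the paper's proof declares $w_{i,j,k,l}$ non-admissible, whereas the value is $0$ (and $\infty$ for the pairing $\{i,l\},\{j,k\}$) --- a well-defined point of $\cp^1$, hence \emph{weakly} admissible under the paper's own definition. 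So the stated ``iff'' only holds after relabelling the $4$-tuple so that $i\sim j$; you make this convention explicit, while the paper hides it behind ``symmetries of cross-ratios.'' You also correctly insert the hypothesis that the cross-ratio is defined in the $\Leftarrow$ direction, which excludes the degenerate case of all four indices lying in one class (where $i\sim j$ and $k\sim l$ hold but the cross-ratio is $0/0$); the paper's statement and proof silently ignore this case.
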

		
		\begin{proof}[Proof of proposition]
			Let $w_{i,j,k,l}$ be an cross-ratio. There is a two equivalence classes on $[q+1]$ , which corresponded to the stratum $W_{\sigma_1}$ (see theorem \ref{ThmSix}). 
			Suppose that $i \nsim j$ and $j \sim k,l$. Then $w_{i,j,k,l}$ is non-admissible. If $i \sim k$ and $j \sim l$ then $w_{i,j,k,l}$ again is non-admissible. In the end, if $i \sim j$ and $k \sim l$, then $w_{i,j,k,l} = \frac{P_{ik}P_{jl}}{P_{il}P_{jk}} =1$ by Plucker identity 
			
			$$P_{ij}P_{kl} - P_{ik}P_{jl} + P_{jk}P_{il}=0.$$
			
			By symmetries of cross-ratios, all other case might be reduced to previous cases.
			\end{proof}
		
		The only problem might with embedding $\tilde{F}_\sigma  \subset \tilde{F}_{\sigma_1}$ in our case might be follows: the cross-ratio $w_{i,j,k,l}$ might be weakly admissible for $W_{\sigma_1}$ and strongly adissible for $W_{\sigma}$. Now we are going to show that it can't be happen. 
		
		If $P_{\sigma_1}$ is defined by an equation $q_I(x) =\sum_{i \in I} x_i = 1$ (see theorem \ref{ThmSix}), then $P_{\sigma}$ is defined by either $\sum_{i \in I} x_i \geq 1$ or $\sum_{i \in I} x_i \leq 1$.	Suppose that $w_{i,j,k,l}$ is weakly admissible for $W_{\sigma_1}$. In that case $P_{ij} = P_{kl} = 0$. If $w_{i,j,k,l}$ is strongly admissible for $W_{\sigma}$, then both $P_{ij}$ and $P_{kl}$ are non-zero. Hence, both $e_{ij}$ and $e_{kl}$ are verticles of $P_\sigma$.	But $q_I(e_{ij})$ and $q_I(e_{kl})$ have different signes. Hence, in this case $P_\sigma = \hprsmplx$ and $P_{\sigma_1}$ does not lie in $\partial P_\sigma$. Contradiction.
		
		Hence, if cross-ratio $w_{i,j,k,l}$ is weakly admissible for $W_{\sigma_1}$, it's also weakly adissible for $W_{\sigma}$. Thus, the embedding $\tilde{F}_\sigma  \subset \tilde{F}_{\sigma_1}$ defined correctly, because any point from $\tilde{F}_\sigma$ lies in $\tilde{F}_{\sigma_1}$.
		
		Now, by induction we obtain this result for any stratum.
		
		\item Directly follows from previous. 
		
		\item As we mentioned before, for any $\tilde{F}_\sigma$ there is a continious and surjective projection $g_\sigma : \tilde{F}_\sigma \rightarrow F_\sigma$, defined by formula $g_\sigma(w_{1,2,3,4}, ~\\ \dots, w_{n-3,n-2,n-1,n}) = (w_{I_1}; \dots, w_{I_{K_\sigma}})$
		
		\item 	Set $\mathcal{P}: = \cup_\sigma P_\sigma$ and $\mathcal{E} := \cup_\sigma \mathring{P}_\sigma \times \tilde{F}_\sigma \subset \mathcal{P} \times \mathcal{F}$.
		
		Define $\tilde{p}: \mathcal{P} \rightarrow \Delta_{q+1,2}$ -- obvious canonical projection. Also, let $\tilde{\mu} = \tilde{p} \circ \mu$.

		Now, fix the stratum $W_\sigma \in \gtwo$. Let $\{p_m \} \subset W$ be a sequence in the main stratum $W$ and $\{q_m\} \subset W_\sigma$ be a sequence in $W_\sigma$. Suppose that these both sequenses converges to the point $p \in W_\sigma$. Easy to see, that each Plucker coordinate $P_{ij}(p_m) \rightarrow P_{ij}(p)$ and $P_{ij}(q_m) \rightarrow P_{ij}(p)$ as $m$ tends to infinity. Since each $p_m$ lies in some orbit of complex torus $H$, we have a corresponding sequense  $\{c_m\} \subset F = W/H$. From the description of the embedding of $F$ into $(\cp^1)^N$, that sequence $\{c_m\}$ is a Cauchy sequense. Hence it has a limit in $\overline{F}\subset (\cp^1)^N $. We also have a similar sequence $\{d_m\} \subset F_\sigma \subset (\cp^1)^{K_\sigma}$.It also has a limit $d \in F_\sigma$ (since the limit of $\{q_m\}$ is $p \in F_\sigma$). 
		
		Now, from the definitions of $\tilde{F}_\sigma$ and projections $g_\sigma: \tilde{F}_\sigma \rightarrow F_\sigma$ one can see that $\lim\limits_{m \rightarrow \infty} g_\sigma (s(h (p_m)) = \lim\limits_{m \rightarrow \infty} s_\sigma(h_\sigma(q_m)) = s_\sigma(h_\sigma(p))$. Also, from the definition of moment map for Grassmanian, one could see that $\lim\limits_{m \rightarrow \infty}\tilde{\mu}(p_m) = \lim\limits_{m \rightarrow \infty} \tilde{\mu}(q_m)$.
		Hence we have that $H$ is continious map as composition of continious maps. So, all axioms of universal space of parameters are accomplished and, by this, $\tilde{F}$ is an universal space of parameters for $\gtwo$.	
		
	\end{enumerate}
\end{proof}

\begin{cor}
	Universal space of parameters $\overline{F}$ is Chow factor of $G_{q+1,2}$ by $H$. 
\end{cor}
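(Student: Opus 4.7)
The plan is to show that $\overline{F} = \gtwo // H$ satisfies all six conditions of Axiom \ref{A6}. For each stratum $W_\sigma$ I would define $\tilde{F}_\sigma \subset \overline{F}$ as the locus where every strongly admissible cross-ratio $w_I$ (with $I \in \mathcal{I}_{s,\sigma}$) lies in $\cp^1_A$ and every weakly admissible $w_J$ takes a value in $\{0, 1, \infty\}$; the coordinate-projection $g_\sigma: \tilde{F}_\sigma \to F_\sigma$ onto the strongly admissible entries supplies $p_\sigma$ of condition (5). Conditions (1) and (2) are then immediate: the main stratum has only strongly admissible cross-ratios, so $\tilde{F} = F$, and $\overline{F}$ is by construction the closure of $F$ inside the smooth variety $\mathcal{W}_{q+1} \subset (\cp^1)^{\binom{q+1}{4}}$.

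For condition (3), the boundary inclusion $\tilde{F}_\sigma \subset \tilde{F}_{\sigma_1}$ whenever $P_{\sigma_1} \subset \partial P_\sigma$, I would combine Theorem \ref{ThmSix} with the classification of admissible polytopes and argue by descending induction on stratum dimension. It suffices to treat the codimension-one passage: when $P_{\sigma_1}$ lies in $\partial \hprsmplx$ (so some Plücker coordinate vanishes identically) all cross-ratios involving the degenerate index become non-admissible while the rest keep their admissibility type; when $P_{\sigma_1}$ is the product of two simplices arising from a non-trivial equivalence relation, a cross-ratio $w_{ijkl}$ with $i \sim j$ and $k \sim l$ degenerates to $1$ via the Plücker identity $P_{ij}P_{kl} = P_{ik}P_{jl} - P_{il}P_{jk}$, while cross-ratios mixing the two classes in incompatible ways become non-admissible. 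The key point to verify is that no cross-ratio strongly admissible on $W_{\sigma_1}$ can fail to be strongly admissible on $W_\sigma$; this follows because the vertex set of $P_{\sigma_1}$ is contained in that of $P_\sigma$, so an earlier weakly/non-admissible status cannot be upgraded on the smaller stratum. Condition (4) then drops out by iterating (3) down from any point of $\overline{F}$: the pattern of $\cp^1_A$-valued versus $\{0,1,\infty\}$-valued cross-ratio coordinates uniquely selects a stratum.

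The main obstacle is condition (6), the continuity of $\mathcal{H}: \cup_\sigma \mathring{P}_\sigma \times \tilde{F}_\sigma \to \hprsmplx \times \overline{F}$. I would check this by a sequential argument: pick $p \in W_\sigma$ and sequences $\{p_m\} \subset W$ and $\{q_m\} \subset W_\sigma$ both converging to $p$; continuity of Plücker coordinates on $\gtwo$ yields convergence of every well-defined cross-ratio, and passage to the closure in $(\cp^1)^N$ shows that the induced sequences $c_m = [p_m] \in F$ and $d_m = [q_m] \in F_\sigma$ converge in $\overline{F}$ and $F_\sigma$ respectively. The moment-map component is continuous because $\mu$ is smooth on $\gtwo$. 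The delicate step — and the real heart of the argument — is checking that $g_\sigma$ commutes with these limits, i.e.\ that the entries hitting $\{0,1,\infty\}$ in the limit point of $\overline{F}$ match precisely the weakly admissible cross-ratios of $W_\sigma$ and that $g_\sigma(\lim c_m) = \lim d_m$. This is where the Plücker identities together with the explicit formulas (1)--(7) for cross-ratios in the previous section are essential, since they force the indeterminacies of form $0/0$ or $\infty/\infty$ arising in the closure to resolve consistently with the admissibility pattern of $W_\sigma$.
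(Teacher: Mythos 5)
Your proposal proves the wrong statement. What you have written is a verification of the six conditions of Axiom \ref{A6} for $\overline{F}$ --- that is, a re-proof of Theorem \ref{TM}, which the paper has already established by the time this corollary is stated. The corollary asserts something else: that the compactification $\overline{F}$, defined as the closure of the cross-ratio image $\Phi(F)$ inside $(\cp^1)^N$, coincides with the Chow factor $\gtwo // H$, defined as the closure of $W/H$ inside the cycle space $\mathcal{C}_r(\gtwo;\delta)$. These are two a priori distinct compactifications of the same open set $F = W/H$, and the entire content of the corollary is their identification. Your opening sentence simply writes ``$\overline{F} = \gtwo // H$'' as if the equality were already available and then proceeds to check the axioms; the identification itself is never addressed anywhere in your argument.

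The paper's proof is a two-line comparison through the moduli space of stable curves: by the theorem of McDuff and Salamon (\cite{MS}, quoted earlier as the statement that the closure of $\Phi(F)$ is the variety $\mathcal{W}_{q+1}$, i.e.\ $\moduli$), one has $\overline{F} = \moduli$; by Kapranov's theorem (\cite{Kap}, Theorem 4.1.8, also quoted earlier), $\gtwo // H = \moduli$ as well. Composing the two identifications gives the corollary. Neither of these external results appears in your proposal, and without them --- or some substitute such as a direct comparison of the two compactifications via the Gelfand--MacPherson correspondence and its extension to Chow quotients (\cite{Kap}, Theorem 2.2.4) --- the statement is not proved.
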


\begin{proof}[Proof]
	By the theorem of Salamon and McDuff \cite{MS}, $\overline{F} = \moduli$ -- the moduli space of stable curves of genus zero witn $n$ marked points. But by the theorem of Kapranov \cite{Kap} , $G_{q+1,2} // H = \moduli$.
\end{proof}

\section{Examples for small q.}

In this section we will check the cases of $n=4$ and $n=5$.

\subsection{Example for q=3.}

For the case $G_{4,2}$ we have $\dim_{\Complex}\mathcal{F}_{4,2} = 1$ and $\binom{4}{4} = 1$. So, we have only one cross-ratio $w_{1,2,3,4}$. The space of parameters of the main stratum $F$ is $\cp^1_A = \cp^1 \setminus \{0,1,\infty \}$. Hence, the closure $F$ in $\cp^1$ is whole $\cp^1$. In this case, for each stratum all virtual spaces of parameters are the same as (ordinary) spaces of parameters. This result is the same as in \cite{BT3}.

\subsection{Example for q=4.}
In \cite{BT1} Buchstaber and Terzic described stratification of $G_{5,2}$ constructed the universal space of parameters for it. They showed that $\mathcal{F} = \cp^2 \sharp 4 \overline{\cp}^2$ and Proposition 28 from their paper implies that their universal space of parameters is the same as our in the case $q=4$ (it's also following from examples from \cite{MS}). Here we demonstrate this case and describe all virtual spaces of parameters in this case (note that our definition of virtual spaces of parameters is differs from definition of Buchstaber and Terzic). 

 We have $\binom{5}{4} = 5$ cross-ratios and $\dim_{\Complex}\mathcal{F}_{5,2} = 2$. Let $w_{1,2,3,4} = \frac{c_1}{c'_1}, w_{1,2,3,5} = \frac{c_2}{c'_2}, w_{1,2,4,5} = \frac{c_3}{c'_3}, w_{1,3,4,5}=\frac{c_4}{c'_4}$ and $w_{2,3,4,5} = \frac{c_5}{c'_5}$. 

By the formulas for cross-ratios, we have four equations for $c_i,c'_i$:

\begin{enumerate}
	\item $$c_1 c'_2 c_3 = c'_1 c_2 c'_3; $$
	
	\item $$c_4(c'_1 - c_1)c'_2 = c'_4 (c'_2 - c_2)c'_1 ; $$
	
	\item $$c_5(c'_1 - c_1) c_2 = c'_5 (c'_2 - c_2)c_1 ;$$
	
	\item $$c_5 c'_4 c_3 = c'_5 c_4 c'_3. $$
\end{enumerate}

In order to describe virtual spaces of parameters for each strata we need know, how  all strata looks like. The description of each stratum was done in \cite{BT1} (section 4.3) . We also change notations in this case, because general notation from Section \ref{sect12} is not convenient in this case. 

\begin{defn}
	Let $\mathcal{I} = \{I_1,\dots ,I_k\}$ be a subset of the set $K = \{I \subset 2^{\{1,\dots,n  \}  } | ~|I|=2 \}$. We denote $W_{\mathcal{I}} = \{ L \in G_{5,2} ~|~ P_I(L) = 0, ~I\in \mathcal{I}   \} $. We also denote the corresponding virtual space of parameters by  $\tilde{F}_{\mathcal{I}}$.
\end{defn}

For exapmle, $W_{ij} = \{ L \in G_{5,2} ~|~ P_{ij}(L) = 0 \}$
and $W_{ij, kl} = \{ L \in G_{5,2} ~|~ P_{ij}(L)= P_{kl}(L) = 0 \}$.

Now we want to describe the universal spaces of parameters in the sense of Definition \ref{defn11}.

\begin{prop} For any stratum $W_{ij}$ the corresponding virtual spaces (according to definition \ref{defn11}) of parameters are following: 
	\begin{enumerate}
		\item $\tilde{F}_{12} = ([1:1];[1:1];[1:1];[c:c'];[c':c]);$
		\item $\tilde{F}_{13} = ([0:1];[0:1];[c:c'];[1:1];[c':c]);$
		\item $\tilde{F}_{14} = ([1:0];[c:c'];[0:1];[0:1];[c-c':c]);$
		\item $\tilde{F}_{15} = ([c:c'];[1:0];[1:0];[1:0],[c:c-c']);$
		\item $\tilde{F}_{23} = ([1:0];[1:0];[c:c'];[c:c'];[1:1]);$ 
		\item $\tilde{F}_{24} = ([0:1];[c:c'];[1:0];[c'-c:c'];[0:1]);$
		\item $\tilde{F}_{25} = ([c:c'];[0:1];[0:1];[c':c'-c];[1:0]);$
		\item $\tilde{F}_{34} = ([1:1];[c:c'];[c:c'];[1:0];[1:0]);$
		\item $\tilde{F}_{35} = ([c:c'];[1:1];[c':c];[0:1];[0:1]);$
		\item $\tilde{F}_{45} = ([c:c'];[c:c'];[1:1];[1:1];[1:1]);$
	\end{enumerate}

Here $[c:c'] \in \cp^1_A = \cp^1 \setminus \{0;1; \infty\}$.

\end{prop}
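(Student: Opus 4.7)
The plan is to verify the list of ten virtual spaces of parameters directly, one stratum at a time. The overall strategy is: fix a pair $(i,j)$ with $i<j$, and for each of the five standard cross-ratios $w_{1,2,3,4}, w_{1,2,3,5}, w_{1,2,4,5}, w_{1,3,4,5}, w_{2,3,4,5}$ determine its admissibility class relative to $W_{ij}$ and, when weakly admissible, its value in $\{0,1,\infty\}$, according to Definition \ref{defn11}. The resulting tuple in $(\cp^1)^5$ is precisely the formula for $\tilde{F}_{ij}$ claimed in the proposition, so the statement reduces to a finite table of Pl\"ucker computations.

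The computational engine is the Pl\"ucker relation $P_{ij}P_{kl} - P_{ik}P_{jl} + P_{il}P_{jk} = 0$. When $P_{ij}=0$ this becomes $P_{ik}P_{jl}=P_{il}P_{jk}$, so the cross-ratio $w_{i,j,k,l}$ evaluates to $1$. More generally, for a cross-ratio $w_{a,b,c,d}$ with $a<b<c<d$, I would case-split on where the vanishing pair $\{i,j\}$ sits among the six index pairs of $\{a,b,c,d\}$: if $\{i,j\}\in\{\{a,b\},\{c,d\}\}$ the Pl\"ucker identity above forces $w_{a,b,c,d}=1$; if $\{i,j\}\in\{\{a,c\},\{b,d\}\}$ then $P_{ij}$ appears directly in the numerator and the cross-ratio is $0$; if $\{i,j\}\in\{\{a,d\},\{b,c\}\}$ then $P_{ij}$ appears in the denominator and the cross-ratio is $\infty$. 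A cross-ratio whose index 4-tuple is disjoint from $\{i,j\}$ remains strongly admissible and contributes a free entry $[c:c'] \in \cp^1_A$.

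With the admissibility pattern tabulated, the final step is to verify that the strongly admissible entries are parametrized as claimed by a \emph{single} free parameter. The four polynomial relations at the start of the section (equivalently, the identities $w_{i,j,k,l}w_{i,j,k,m}^{-1}w_{i,j,l,m}=1$ together with Pl\"ucker-derived equalities such as $w_{1,3,4,5}=w_{1,2,4,5}\cdot w_{2,3,4,5}$, obtained by direct substitution in the definition) force the free cross-ratios to be related by equality, inversion, or by the transformation $[c:c']\mapsto[c-c':c]$ coming from identity $1-w_{i,j,k,l}=-w_{i,k,j,l}$. The precise shape of each such relation is dictated by the pattern of entries that collapse to $1$, $0$, or $\infty$. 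In every case this single algebraic compatibility reduces the number of independent parameters to one, matching the expected complex dimension $\dim_{\Complex} W_{ij} - \dim_{\Complex} H = 5 - 4 = 1$.

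The main obstacle is pure bookkeeping: ten strata times five cross-ratios, with a case analysis on which of the six pairs in each 4-tuple matches $\{i,j\}$, while keeping the sign and inversion conventions consistent with the symmetries $w_{j,i,k,l}=w_{i,j,k,l}^{-1}$ and $1-w_{i,j,k,l}=-w_{i,k,j,l}$. No theoretical input beyond the Pl\"ucker relations and the cross-ratio identities already established in the previous section is needed, so the proof is a finite verification whose length reflects only the size of the case table.
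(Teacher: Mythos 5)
Your proposal is correct in substance but does noticeably more than the paper, whose entire proof is the single observation that on a stratum $W_{ij}$ no cross-ratio is non-admissible (only one Pl\"ucker coordinate vanishes, so no cross-ratio degenerates to $0/0$), hence $\tilde F_{ij}$ coincides with the ordinary space of parameters $F_{ij}$, whose explicit description is then quoted from Lemma 25 of \cite{BT1}. You instead re-derive the list from scratch, which is a legitimate and more self-contained route: the case split on where $\{i,j\}$ sits among the six pairs of a $4$-tuple correctly yields value $1$ (via the Pl\"ucker relation when $\{i,j\}$ is the ``outer'' pair $\{a,b\}$ or $\{c,d\}$), value $0$ (numerator pair), or $\infty$ (denominator pair), and the identity $w_{i,j,k,l}w_{i,j,k,m}^{-1}w_{i,j,l,m}=1$ does cut the two remaining free entries down to a single parameter. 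One wording error to fix: in $G_{5,2}$ no $4$-tuple is \emph{disjoint} from a $2$-element set, so your criterion for strong admissibility, read literally, is vacuous; the correct condition is that the $4$-tuple does not \emph{contain} both $i$ and $j$, i.e.\ the omitted fifth index is $i$ or $j$. This leaves exactly two strongly admissible cross-ratios per $W_{ij}$, as required. You should also state explicitly that the denominator (resp.\ numerator) is nonzero in the $0$ (resp.\ $\infty$) cases, which holds because only $P_{ij}$ vanishes on $W_{ij}$.

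A second point worth flagging: carrying out your computation honestly appears to contradict entry (1) of the printed list. On $W_{12}$ one has $w_{1,3,4,5}/w_{2,3,4,5} = P_{14}P_{25}/(P_{15}P_{24}) = w_{1,2,4,5} = 1$, so the last two coordinates of $\tilde F_{12}$ should both equal $[c:c']$ rather than $[c:c']$ and $[c':c]$ (compare $\tilde F_{45}$ and $\tilde F_{34,35,45}$ elsewhere in the paper, where the analogous pair is printed as equal). So either there is a typo in the proposition or a convention mismatch with \cite{BT1}; your method would surface this, and you should resolve it explicitly rather than silently matching the stated tuples. Finally, since $\tilde F_{ij}$ is by Definition \ref{defn11} a subset of $\overline F\subset(\cp^1)^5$, you need both inclusions: that every point of $F_{ij}$ realizes the stated pattern, and that every point of $\overline F$ with that pattern lies on the claimed one-parameter family; the four defining relations of $\overline F$ give the second inclusion, so say so.
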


\begin{proof}[Proof]
	There is no non-admissible cross-ratios for any $W_{ij}$, hence the ordinary spaces of parameters coincide with virtual spaces of parameters. 
\end{proof}

\begin{rem}
	One could notice that the list above is coincide with list in Lemma 25 from \cite{BT1}.
\end{rem}

\begin{prop} For any stratum $W_{ij,kl}$ the universal space of parameters are following
	\begin{enumerate}
		\item $\tilde{F}_{12,34} = ([1:1];[1:1];[1:1];[1:0];[1:0]);$
		\item $\tilde{F}_{12,35} = ([1:1];[1:1];[1:1];[0:1];[0:1]);$
		\item $\tilde{F}_{12,45} = ([1:1];[1:1];[1:1];[1:1];[1:1]);$
		\item $\tilde{F}_{13,24} = ([0:1];[0:1];[1:0];[1:1];[0:1]);$
		\item $\tilde{F}_{13,25} = ([0:1];[0:1];[0:1];[1:1];[1:0]);$
		\item $\tilde{F}_{13,45} = ([0:1];[0:1];[1:1];[1:1];[1:1]);$
		\item $\tilde{F}_{14,23} = ([1:0];[1:0];[0:1];[0:1];[1:1]);$
		\item $\tilde{F}_{14,35} = ([1:0];[1:1];[0:1];[0:1];[0:1]);$
		\item $\tilde{F}_{14,25} = ([1:0];[0:1];[0:1];[0:1];[1:0]);$
		\item $\tilde{F}_{15,24} = ([0:1];[1:0];[1:0];[1:0],[0:1]);$
		\item $\tilde{F}_{15,34} = ([1:1];[1:0];[1:0];[1:0],[1:0]);$
		\item $\tilde{F}_{15,23} = ([1:0];[1:0];[1:0];[1:0],[1:1]);$
		\item $\tilde{F}_{23,45} = ([1:0];[1:0];[1:1];[1:1];[1:1]);$
		\item $\tilde{F}_{24,35} = ([0:1];[1:1];[0:1];[0:1];[0:1]);$
		\item $\tilde{F}_{25,34} = ([1:1];[0:1];[0:1];[1:0];[1:0]);$
		  
	\end{enumerate}
\end{prop}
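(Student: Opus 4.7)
The plan is to verify the $15$ five-tuples by direct computation, relying on two ingredients: the explicit formula $w_{a,b,c,d} = P_{ac}P_{bd}/(P_{ad}P_{bc})$ for $a<b<c<d$, and the Plücker identity $P_{ab}P_{cd} - P_{ac}P_{bd} + P_{ad}P_{bc} = 0$ applied to the four indices of the cross-ratio. Since the stratum $W_{ij,kl}$ is defined by $P_{ij}=P_{kl}=0$ with all remaining Plücker coordinates non-zero on the generic point, each cross-ratio is determined, as a point of $\cp^1$, by substitution into these formulas.

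First I would establish a uniform mechanism classifying the contribution of a vanishing pair $\{i,j\}$ to the cross-ratio $w_{a,b,c,d}$ according to the position of $\{i,j\}$ inside the $4$-tuple $\{a,b,c,d\}$. If $\{i,j\}$ coincides with one of the \emph{outer} pairs $\{a,b\}$ or $\{c,d\}$, then the Plücker identity $P_{ij}P_{kl}=P_{ac}P_{bd}-P_{ad}P_{bc}$ forces the numerator to equal the denominator, so $w_{a,b,c,d}=1=[1:1]$. If $\{i,j\}$ is a \emph{numerator inner pair} $\{a,c\}$ or $\{b,d\}$, the numerator vanishes and $w_{a,b,c,d}=0=[0:1]$. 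If $\{i,j\}$ is a \emph{denominator inner pair} $\{a,d\}$ or $\{b,c\}$, the denominator vanishes and $w_{a,b,c,d}=\infty=[1:0]$. If $\{i,j\}\not\subset\{a,b,c,d\}$, this pair contributes nothing and one applies the same analysis to the other vanishing pair $\{k,l\}$.

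A key consistency check, needed so that the cross-ratio is well defined in $\cp^1$ and never $0/0$, is that any numerator inner pair shares an element with any denominator inner pair; since by assumption $\{i,j\}\cap\{k,l\}=\emptyset$, the two vanishing pairs cannot simultaneously kill the numerator and the denominator of the same cross-ratio. Furthermore, since $G_{5,2}$ involves $5$ indices while a cross-ratio uses $4$, either $\{a,b,c,d\}=\{i,j,k,l\}$ (in which case both pairs are subsets of the $4$-tuple, and a short verification shows that outer/outer gives $1$, numerator/numerator gives $0$, denominator/denominator gives $\infty$, while outer/inner is consistently $1$ by the Plücker identity), or $\{a,b,c,d\}$ misses exactly one index of $\{i,j,k,l\}$ and contains the fifth index of $\{1,\dots,5\}$, so that exactly one of the two pairs is contained in the cross-ratio's $4$-tuple and the rule above applies.

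With this framework, the proof reduces to a finite table lookup: for each of the $15$ strata $W_{ij,kl}$ and each of the five canonical cross-ratios $w_{1,2,3,4},w_{1,2,3,5},w_{1,2,4,5},w_{1,3,4,5},w_{2,3,4,5}$, one identifies the positions of the two vanishing pairs among outer, numerator inner, denominator inner, or absent, and reads off the value. The only real obstacle is organizational bookkeeping; no mathematical difficulty remains beyond the case analysis, and the resulting tuples coincide with the list stated in the proposition.
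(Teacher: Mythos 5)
Your proposal is correct and follows essentially the same route as the paper: direct evaluation of the five cross-ratios on each stratum $W_{ij,kl}$ via the Pl\"ucker relation, together with the observation that no cross-ratio can become non-admissible (i.e.\ of the form $0/0$) there. In fact your classification by the position of a vanishing pair (outer, numerator-inner, denominator-inner, absent) and the remark that numerator-inner and denominator-inner pairs always intersect supply an explicit justification for the step the paper only asserts with ``one could check that there is no non-admissible cross-ratios''.
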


\begin{proof}[Proof]
	Fix a stratum $W_{ij,kl}$. One could check that there is no non-admissible cross-ratios for this stratum. So, virtual spaces of parameters coincide with ordinary spaces of parameters.
\end{proof}

Now we want to describe virtual spaces of parameters for stratum of type $W_{ij,kl,pq}$. According to \cite{BT1}, all such strata are actually $W_{ij,ik,jk}$.

\begin{prop}		For any stratum $W_{ij,ik,jk}$ the universal space of parameters are following
	\begin{enumerate}
		\item $\tilde{F}_{34,35,45} = ([1:1];[1:1];[1:1];[c:c'];[c:c']);$
		\item $\tilde{F}_{24,25,45} = ([0:1];[0:1];[c:c'];[1:1];[c':c]);$
		\item $\tilde{F}_{23,25,35} = ([1:0];[c:c'];[0:1];[0:1];[c-c':c]);$
		\item $\tilde{F}_{23,24,34} = ([c:c'];[1:0];[1:0];[1:0];[c:c-c']);$
		\item $\tilde{F}_{14,15,45} = ([0:1];[0:1];[c:c'];[c:c'];[1:1]);$
		\item $\tilde{F}_{13,15,35} = ([0:1];[c:c'];[1:0];[c'-c:c'];[0:1]);$
		\item $\tilde{F}_{13,14,34} = ([c:c'];[0:1];[0:1];[c':c'-c];[1:0]);$
		\item $\tilde{F}_{12,15,25} = ([1:1];[c:c'];[c:c'];[1:0];[1:0]);$
		\item $\tilde{F}_{12,14,24} = ([c:c'];[1:1];[c':c];[0:1];[0:1]);$
		\item $\tilde{F}_{12,13,23} = ([c:c'];[c:c'];[1:1];[1:1];[1:1]);$
	\end{enumerate}

Here $[c:c'] \in \cp^1$.
\end{prop}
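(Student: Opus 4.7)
The strategy is to proceed case-by-case through the ten strata, but the pattern is uniform and can be summarised in a single combinatorial scheme. For a stratum $W_{ij,ik,jk}$ the three vanishing Plucker coordinates are exactly the three pairs inside the $3$-element set $\{i,j,k\}$, so under the Gelfand--MacPherson correspondence the three marked points $x_i, x_j, x_k \in \cp^1$ collide. For each of the five basic cross-ratios $w_{a,b,c,d}$ the classification into strongly admissible, weakly admissible, or non-admissible is controlled by the integer $r := |\{a,b,c,d\} \cap \{i,j,k\}|$, which necessarily lies in $\{2,3\}$, because $|\{a,b,c,d\}|=4$ and $[5]\setminus\{i,j,k\}$ has only two elements.

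If $r = 3$ then $\{i,j,k\} \subset \{a,b,c,d\}$, so each of the three pair-partitions of $\{a,b,c,d\}$ into two $2$-element subsets contains exactly one pair lying in $\{i,j,k\}$; this forces the numerator $P_{ac}P_{bd}$, the denominator $P_{ad}P_{bc}$, and the Plucker-complement $P_{ab}P_{cd}$ all to vanish, so $w_{a,b,c,d}$ is non-admissible. For each stratum there are exactly two such cross-ratios, one for each element of $[5]\setminus\{i,j,k\}$. If $r=2$ then exactly one pair among $\{a,b,c,d\}$ vanishes; depending on whether that pair occurs in the partition $\{\{a,c\},\{b,d\}\}$, $\{\{a,d\},\{b,c\}\}$, or $\{\{a,b\},\{c,d\}\}$, the cross-ratio takes value $0$, $\infty$, or (via the rewriting $w_{a,b,c,d} = 1 - P_{ab}P_{cd}/(P_{ad}P_{bc})$ supplied by the Plucker identity) the value $1$. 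The remaining three basic cross-ratios realise this $r=2$ situation and are hence weakly admissible; no strongly admissible cross-ratios appear on any $W_{ij,ik,jk}$.

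With the weakly admissible values tabulated I would substitute them into the four defining equations of $\overline{F}$ displayed before the proposition. In each case three of those equations collapse to the tautology $0=0$, because the forced entries in $\{0,1,\infty\}$ sit in precisely the slots needed to annihilate each factor, while the remaining equation yields a non-trivial relation between the two non-admissible coordinates $[c_a:c'_a]$ and $[c_b:c'_b]$. The relation has the form $[c_b:c'_b] = \varphi([c_a:c'_a])$ for one of the six elements of the cross-ratio symmetry group (identity, $w\mapsto 1-w$, $w\mapsto 1/w$, etc.), which explains the recurring shapes $[c:c']$, $[c':c]$, $[c-c':c]$, $[c':c'-c]$ in the statement. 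Working in $\overline{F}$ rather than in $F$ ensures that $[c:c']$ sweeps out the whole of $\cp^1$ and not merely $\cp^1_A$, since $\overline{F}=\moduli$ contains all the relevant boundary limits.

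The main obstacle is not conceptual but careful combinatorial bookkeeping across the ten strata, compounded by the need to identify precisely which of the four constraint equations survives after substitution and to read off the induced symmetry $\varphi$. The $S_5$-action permuting the five marked points sends each stratum of type $W_{ij,ik,jk}$ to another and commutes with all the cross-ratio identities, so in practice it is enough to carry out the computation in full for two or three representative strata (say $W_{12,13,23}$, $W_{34,35,45}$, and $W_{23,25,35}$) and propagate the result to the remaining cases by relabelling indices.
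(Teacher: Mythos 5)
Your proposal is correct and follows essentially the same route as the paper: classify each of the five cross-ratios on $W_{ij,ik,jk}$ as non-admissible (the two containing all of $\{i,j,k\}$) or weakly admissible with forced value in $\{0,1,\infty\}$, then extract the single surviving relation between the two non-admissible coordinates from the identity $w_{i,j,k,p}w^{-1}_{i,j,k,q}w_{i,j,p,q}=1$ (equivalently, from the displayed defining equations of $\overline{F}$), handling the remaining strata by relabelling. The only cosmetic difference is that you organize the admissibility check via the count $r=|\{a,b,c,d\}\cap\{i,j,k\}|$, which the paper does case by case.
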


\begin{proof}[Proof]
	For any stratum $W_{ij,ik,jk}$ there are exactly two non-admissible cross-ratio: $w_{i,j,k,p}$ and $w_{i,j,k,q}$. Any other cross-ratio is weakly admissible. So, by the equation $w_{i,j,k,p}w^{-1}_{i,j,k,q}w_{i,j,p,q} = 1$ we can find the relation between non-admissible cross-ratios.
	
	For example, we describe the case of $W_{34,35,45}$. Easy to see, that $w_{1,2,3,4}, w_{1,2,3,5}, w_{1,2,4,5}$ are weakly admissible and they all equal $1$. Moreover, $w_{1,3,4,5}$ and $w_{2,3,4,5}$ are both non-admissible. By the equation $ w_{1,3,4,5} w^{-1}_{2,3,4,5} w_{1,2,4,5} = 1$ (or, equivalently, from equation $c_5 c'_4 c_3 = c'_5 c_4 c'_3$ )we obtain the equality $w_{1,3,4,5} = w_{2,3,4,5} = \frac{c}{c'}$. Hence $\tilde{F}_{34,35,45} = ([1:1];[1:1];[1:1];[c:c'];[c:c'])$.
	
	All other cases are obtained in the same way and we omit it.
\end{proof}

We described all virtual spaces of parameters for non-main stratum, which corresponds to 4-dimensional polytopes in $\Delta_{5,2}$. Now we turn to 3-dimensional polytopes. First of all we want to deal with strata in $G_{4,2} \subset G_{5,2}$, which may be obtained via coordinate inclusions $\Complex^4 \rightarrow \Complex^5$.

\begin{prop}
	Denote $W_i := W_{ij,ik,il, im}$, where $\{i,j,k,l,m\} = \{1,2,3,4,5\}$. Denote corresponding virtual space of parameters as $\tilde{F}_i$. Then we have $\tilde{F}_i = F \bigcup (\cup_{j \neq i} \tilde{F}_{ij})$. 
\end{prop}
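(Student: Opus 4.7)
The plan is to read off $\tilde{F}_i$ directly from Definition \ref{defn11} and then verify each direction of the claimed decomposition. First I classify the cross-ratios of $G_{5,2}$ according to their admissibility on the stratum $W_i = W_{ij,ik,il,im}$, where the $i$-th row of a representing matrix vanishes identically, so $P_{ia} = 0$ for every $a \neq i$ while the six Plucker coordinates $P_{ab}$ with $a,b \in \{j,k,l,m\}$ remain nonzero. A cross-ratio $w_T = P_{ac}P_{bd}/(P_{ad}P_{bc})$ has four Plucker factors; if $i \in T$ two of them vanish on $W_i$ so $w_T$ is non-admissible ($0/0$), while if $i \notin T$ --- which forces $T = \{j,k,l,m\}$ --- all four factors are nonzero and $w_{jklm}$ is strongly admissible. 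No cross-ratios are weakly admissible, so Definition \ref{defn11} gives
\[
\tilde{F}_i = \{x \in \overline{F} \mid w_{jklm}(x) \in \cp^1_A\}.
\]

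The inclusions $F \subset \tilde{F}_i$ and $\tilde{F}_{ia} \subset \tilde{F}_i$ for every $a \in \{j,k,l,m\}$ are then immediate. Every cross-ratio on $F$ lies in $\cp^1_A$, so $w_{jklm}$ does too. The previous proposition shows that $w_T$ is strongly admissible on the codimension-one stratum $W_{ia}$ exactly when $\{i,a\} \not\subset T$; since $i \notin \{j,k,l,m\}$ this holds for $T = \{j,k,l,m\}$, so $w_{jklm}(x) \in \cp^1_A$ throughout $\tilde{F}_{ia}$, as can also be read off from the explicit parametrizations tabulated there.

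The substance is the reverse inclusion $\tilde{F}_i \subset F \cup \bigcup_{a \neq i} \tilde{F}_{ia}$. Pick $x \in \tilde{F}_i$; if every cross-ratio at $x$ lies in $\cp^1_A$ then $x \in F$, so I may assume $x \in \overline{F} \setminus F$. Under the identification $\overline{F} = \overline{M}_{0,5}$ the boundary is the union of ten codimension-one divisors $D_{pq}$ indexed by two-element subsets $\{p,q\} \subset \{1,\ldots,5\}$, meeting at fifteen codimension-two corners. If $x$ lies in the open part of some $D_{pq}$, the degeneration rule for cross-ratios on a two-component nodal curve gives $w_T(x) \in \cp^1_A$ exactly when $\{p,q\} \not\subset T$, and $w_T(x) \in \{0,1,\infty\}$ otherwise. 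Applied to $T = \{j,k,l,m\}$ and combined with $w_{jklm}(x) \in \cp^1_A$, this forces $\{p,q\} \not\subset \{j,k,l,m\}$, i.e.\ $i \in \{p,q\}$, so $\{p,q\} = \{i,a\}$ for some $a \neq i$ and $x \in \tilde{F}_{ia}$.

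What remains --- and what I expect to be the main obstacle --- is ruling out corners. A corner of $\overline{M}_{0,5}$ corresponds to a stable three-component chain carrying marked points in sizes $(2,1,2)$, distributed in sets $S_1, S_2, S_3$. Choosing local coordinates so that the two markings on the left bubble are near $0$, the single middle marking is at $1$, and the two markings on the right bubble are near $\infty$, and letting the two bubbling parameters tend to zero, I will expand $w_{jklm} = (x_j - x_l)(x_k - x_m)/((x_j - x_m)(x_k - x_l))$ in the three cases $i \in S_1$, $i \in S_2$, $i \in S_3$. In each case the four differences appearing in the cross-ratio either all have the same large order --- giving $w_{jklm} \to 1$ --- or pair off into two small and two large quantities, giving $0$ or $\infty$ depending on how the distinguished pair $\{j,k\}$ aligns with the components; in particular $w_{jklm}$ always lies in $\{0,1,\infty\}$ at a corner. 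Thus no corner belongs to $\tilde{F}_i$, which finishes the proof.
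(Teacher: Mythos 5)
Your proof is correct and follows the same overall strategy as the paper's: classify the cross-ratios on $W_i$ (every $w_T$ with $i\in T$ is non-admissible, $w_{jklm}$ is strongly admissible, none are weakly admissible), identify $\tilde{F}_i$ as the locus in $\overline{F}$ where $w_{jklm}\in\cp^1_A$, and then verify the two inclusions. The only place you go beyond the paper is the reverse inclusion, which the paper merely asserts; your case analysis over the boundary stratification of $\overline{F}\cong\overline{M}_{0,5}$ --- on the open part of a divisor $D_{pq}$ one has $w_T\in\cp^1_A$ exactly when $\{p,q\}\not\subset T$, which forces $i\in\{p,q\}$, while at each of the fifteen corners every cross-ratio degenerates to $\{0,1,\infty\}$ --- is a correct and complete justification of that step.
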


\begin{proof}[Proof]
	We prove it only for $W_1$. All other cases may be obtained from this case by action of permutation group $S_5$ on $G_{5,2}$.

	On $W_1$ all cross-ratios $w_{1,j,k,l}$ are non-admissible since all $P_{1j} = 0$ for any $j$. Hence only $w_{2,3,4,5}$ is admissible. Actually it is strongly admissible, because all Plucker coordinates $P_{kl}$  are non-zero if $k \neq 1$ and $l \neq 1$. Hence $\tilde{F}_1 = ([c_1:c'_1];[c_2:c'_2];[c_3:c'_3];[c_4:c'_4];[c_5:c'_5])$ and $[c_1:c'_1] \neq [1:0], [0:1],[1:1]$. Easy to see that $F \subset \tilde{F}_i$ and $\tilde{F}_{1j} \subset \tilde{F}_1$. According to \ref{defn11}, any point in $\tilde{F}_i$ has the form of $[c_1:c'_1];[c_2:c'_2];[c_3:c'_3];[c_4:c'_4];[c_5:c'_5]$ and $[c_5:c'_5] \in \cp^1_A$. But any such point lies in $F \bigcup (\cup_{j \neq 1} \tilde{F}_{1j})$. Hence, $\tilde{F}_i = F \bigcup (\cup_{j \neq i} \tilde{F}_{ij})$.
	
\end{proof}

Let $\Complex_i = \{z\in \Complex^5 ~|~ z_i=0 \}$. Denote by $Y_i$ stratum $W_{jk,jl,jm,kl,km,lm}$. This stratum consists of $L \in G_{5,2}$, such as $L \cap \Complex_i$ is a line. This is an open dense subset in $\cp^3 \subset G_{5,2}$.

\begin{prop}
	For any $Y_i$ the corresponding universal space of parameters is whole $\mathcal{F}_{5,2}$. 
\end{prop}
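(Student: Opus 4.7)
The plan is to apply Definition \ref{defn11} directly, after analyzing which cross-ratios are admissible on $Y_i$. The key observation is that a plane $L \in Y_i$ contains the coordinate axis $\Complex\cdot e_i$, so the Pl\"ucker matrix of $L$ has a row of zeros in every position except $i$; consequently, every Pl\"ucker coordinate $P_{st}$ with $s,t \neq i$ vanishes identically on $Y_i$, while the four coordinates $P_{ij}$ with $j \neq i$ are generically non-zero (these four coordinates give the embedding $Y_i \hookrightarrow \cp^3$).

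The main step is to classify the admissibility of an arbitrary cross-ratio $w_{a,b,c,d} = P_{ac}P_{bd}/(P_{ad}P_{bc})$. Since the $4$-element subset $\{a,b,c,d\} \subset \{1,\dots,5\}$ contains the index $i$ at most once, two cases arise. If $i \notin \{a,b,c,d\}$, then none of the four Pl\"ucker coordinates appearing in $w_{a,b,c,d}$ involves $i$, so all four vanish on $Y_i$. If, on the other hand, exactly one of $a,b,c,d$ equals $i$ --- say $a=i$ --- then the two coordinates containing $i$ (namely $P_{ac}$ and $P_{ad}$) are generically non-zero, while the two that do not contain $i$ (namely $P_{bc}$ and $P_{bd}$) vanish on $Y_i$; hence both numerator and denominator of $w_{a,b,c,d}$ are $0$. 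In either case $w_{a,b,c,d}$ takes the indeterminate form $0/0$ on $Y_i$, so by the definition of admissibility every cross-ratio is non-admissible on $Y_i$: $\mathcal{I}_{s,Y_i} = \mathcal{I}_{w,Y_i} = \emptyset$, while $\mathcal{I}_{n,Y_i}$ comprises all five cross-ratios on $G_{5,2}$.

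Finally, I would apply Definition \ref{defn11}, read with the convention (as in the preceding examples) that non-admissible cross-ratios impose no constraint on points of $\overline{F}$: since there is no $I \in \mathcal{I}_{s,Y_i}$ forcing $w_I(x) \in \cp^1_A$ and no $J \in \mathcal{I}_{w,Y_i}$ forcing $w_J(x) \in \{0,1,\infty\}$, the conditions defining $\tilde{F}_{Y_i}$ are vacuous, so $\tilde{F}_{Y_i} = \overline{F} = \mathcal{F}_{5,2}$. As a consistency check, the ordinary parameter space $F_{Y_i} = Y_i/H$ is a single point, namely the open $H$-orbit on $Y_i \cong \cp^3$ (a $3$-dimensional projective space on which the rank-$4$ effective quotient torus $H/\text{diag}$ acts with a dense orbit), so the projection $p_{Y_i}: \tilde{F}_{Y_i} \to F_{Y_i}$ is necessarily the constant map, which is perfectly in harmony with the virtual parameter space being the \emph{entire} $\mathcal{F}_{5,2}$. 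The main conceptual subtlety --- and essentially the only obstacle --- is the interpretation of Definition \ref{defn11} in this extremely degenerate case where no cross-ratio is admissible, but this reading is forced by coherence with Axiom \ref{A6} and with the virtual parameter spaces computed in the preceding propositions.
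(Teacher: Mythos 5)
Your argument is correct and takes essentially the same route as the paper, whose proof is the one-line assertion that no cross-ratio is admissible on $Y_i$: your case analysis (a $4$-tuple either omits $i$, so all four Plucker coordinates vanish, or contains $i$ exactly once, so exactly one factor in the numerator and one in the denominator vanish, and in either case the cross-ratio is an indeterminate $0/0$, hence non-admissible) is precisely the justification of that assertion, after which Definition~\ref{defn11} gives $\tilde{F}_{Y_i}=\overline{F}=\mathcal{F}_{5,2}$ vacuously. One minor wording slip: it is not the ``Plucker matrix'' that has a zero row, but rather a representing $5\times 2$ matrix $A_L$ may be chosen with one column equal to $e_i$ (equivalently, the vanishing $P_{st}=0$ for $s,t\neq i$ is already the definition of the stratum $Y_i=W_{jk,jl,jm,kl,km,lm}$, so it needs no derivation).
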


\begin{proof}[Proof]
	Obvious, since there is no admissible cross-ratios for $Y_i$.
\end{proof}

Now we want to study the case of strata, which corresponding polytopes are $3$-dimensional and it does not lie on $\partial \Delta_{5,2}$. According to Theorem \ref{ThmSix}, all such polytopes may be described via equation $x_i + x_j = 1$ in $\Real^5$. More precisely, suppose $\{k,l,m\} = \{1,2,3,4,5\} \setminus \{i,j\}$. Then the corresponding stratum is $W_{ij,kl,km,lm}$.

\begin{prop}
	For any $W_{ij,kl,km,lm}$ the corresponding virtual spece of parameters $\tilde{F}_{ij,kl,km,lm}$ is equal $\tilde{F}_{ij}$ (and is also equal $\tilde{F}_{kl,km,lm}$).
\end{prop}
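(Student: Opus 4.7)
The plan is to compute the admissibility type of each of the five cross-ratios on the three strata and then use the defining equations of $\overline{F}\subset(\Complex P^1)^5$ to see that the three virtual spaces coincide as subsets of $\overline{F}$. By the $S_5$-symmetry of $G_{5,2}$ I will assume $\{i,j\}=\{1,2\}$ and $\{k,l,m\}=\{3,4,5\}$; the general case follows by relabeling.

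First, I would compute the sets $\mathcal{I}_s, \mathcal{I}_w, \mathcal{I}_n$ for each stratum. On $W_{12}$ the only vanishing Plücker coordinate is $P_{12}$, and the identity $P_{12}P_{ab}-P_{1a}P_{2b}+P_{1b}P_{2a}=0$ collapses to $P_{1a}P_{2b}=P_{1b}P_{2a}$, so $w_{1,2,a,b}=1$ for every $\{a,b\}\subset\{3,4,5\}$; thus $w_{1,2,3,4}, w_{1,2,3,5}, w_{1,2,4,5}$ are weakly admissible with value $1$, while $w_{1,3,4,5}$ and $w_{2,3,4,5}$ are strongly admissible. On $W_{34,35,45}$ the same three cross-ratios are again forced to $1$ (now via $P_{34}=P_{35}=P_{45}=0$ and Plücker), but on each of the 4-tuples $\{1,3,4,5\}$ and $\{2,3,4,5\}$ all three Plücker products contain a vanishing factor -- for instance, on $\{1,3,4,5\}$ the products $P_{13}P_{45}, P_{14}P_{35}, P_{15}P_{34}$ each contain one of $P_{34},P_{35},P_{45}$ -- so every cross-ratio on such a 4-tuple has indeterminate form $0/0$ and is non-admissible. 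On $W_{12,34,35,45}$ the admissibility type of each of the five cross-ratios coincides with that on $W_{34,35,45}$, so the equality $\tilde{F}_{12,34,35,45}=\tilde{F}_{34,35,45}$ is an immediate consequence of Definition \ref{defn11}.

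Next, I would intersect with $\overline{F}$: substituting $c_i=c'_i$ for $i=1,2,3$ (which encodes $w_{1,2,3,4}=w_{1,2,3,5}=w_{1,2,4,5}=1$) into the four defining equations of $\overline{F}$, equations 1--3 become automatic, while equation 4 collapses to $c_5 c'_4 = c'_5 c_4$, i.e.\ $[c_4:c'_4]=[c_5:c'_5]$. Hence in all three cases the virtual space is the same one-parameter curve
\[
\{(1,1,1,[c:c'],[c:c']) : [c:c']\in X\}\subset\overline{F},
\]
with $X=\cp^1_A$ in the case of $\tilde{F}_{12}$ (the two free cross-ratios are strongly admissible) and $X=\Complex P^1$ in the other two cases (non-admissible, hence unconstrained).

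The main obstacle is reconciling the open range $X=\cp^1_A$ with the closed $X=\Complex P^1$ so as to reach a literal equality. The three extra points $[c:c']\in\{0,1,\infty\}$ correspond to further coincidences of the associated point configuration on $\Complex P^1$; they lie in $\tilde{F}_{\sigma'}$ for strictly smaller strata $\sigma'$, and axiom 4 of the main theorem (the inclusion $\tilde{F}_{12}\subset\tilde{F}_{\sigma'}$ whenever $P_{\sigma'}$ lies in $\partial P_{12}$) glues these boundary points naturally onto $\tilde{F}_{12}$ inside $\overline{F}$. The three virtual spaces are therefore identified as the same one-dimensional substratum of $\overline{F}$ parameterized by the diagonal copy of $\Complex P^1$, and the remainder of the argument is a routine bookkeeping of Plücker identities once the admissibility types have been sorted out.
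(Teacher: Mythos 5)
Your core computation is correct and is essentially the paper's own argument: the paper's proof consists precisely of identifying $w_{i,k,l,m}$ and $w_{j,k,l,m}$ as non-admissible on $W_{ij,kl,km,lm}$ and checking that the remaining three cross-ratios $w_{i,j,k,l}, w_{i,j,k,m}, w_{i,j,l,m}$ are admissible (they equal $1$ there, since $P_{ij}=0$), after which it stops. You do the same admissibility bookkeeping, but for all three strata, and you add the explicit reduction of the defining equations of $\overline{F}$ to the diagonal curve $(1,1,1,[c:c'],[c:c'])$; that extra step is sound and actually reproduces the paper's earlier tabulated descriptions of $\tilde{F}_{ij}$ and $\tilde{F}_{kl,km,lm}$. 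The one place your argument does not close is exactly the place you flag: under a literal reading of Definition \ref{defn11}, the tuples $\{i,k,l,m\}$ and $\{j,k,l,m\}$ are \emph{strongly} admissible for $W_{ij}$ (forcing $w\in\cp^1_A$) but \emph{non-admissible} for the other two strata (imposing no condition), so $\tilde{F}_{ij}$ omits the three points with $[c:c']\in\{0,1,\infty\}$ --- for instance the point $(1,1,1,1,1)=\tilde{F}_{ij,lm}$ lies in $\tilde{F}_{kl,km,lm}$ but not in $\tilde{F}_{ij}$. Your appeal to condition 4 of Axiom 6 explains where those three points live, but it does not make the two subsets of $\overline{F}$ literally equal, so as written it is not a proof of the stated equality; to be fair, the paper's own proof is silent on this point as well, and the discrepancy reflects an imprecision in the proposition (equality should be read as equality up to the closure of $\tilde{F}_{ij}$, or the definition of $\tilde F_\sigma$ adjusted) rather than an error in your reasoning.
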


\begin{proof}[Proof]
	For each stratum $W_{ij,kl,km,lm}$ we have, that $w_{i,k,l,m}$ and $w_{j,k,l,m}$ are non-admissible. We should show, that all other cross-ratios are admissible.
	
	In order to do this, we need to write down other 3 cross-ratios via Plucker coordinates:
	
	$$w_{i,j,k,l} = \frac{P_{ik}P_{jl}}{P_{il}P_{jk}},$$
	$$w_{i,j,k,m} = \frac{P_{ik}P_{jm}}{P_{im}P_{jk}},$$
	$$w_{i,j,l,m} = \frac{P_{il}P_{jm}}{P_{im}P_{jl}}.$$
	
	Since all Plucker coordinates in these formulas are not zero, we have that $w_{i,j,k,l},w_{i,j,k,m}$ and $w_{i,j,l,m}$ are admissible. This proves the proposition.
\end{proof}

Now we describe universal spaces of parameters for strata, which corresponds to admissible polytopes in $\Delta_{4,2} \subset \Delta_{5,2}$. First of all we need to deal with strata, which does not lie in $\partial\Delta_{4,2}$.

\begin{prop}
	For the stratum $W_{ij,ik,il, im,jk}$ the corresponding space of parameters $\tilde{F}_{ij,ik,il, im,jk}$ is follows:  the cross-ratio $w_{j,k,l,m}$ is equal $[1:1]$ and other cross-ratios are arbitrary.
	For the stratum $W_{ij,ik,il, im,jk,lm}$ $\tilde{F}_{ij,ik,il, im,jk,lm}$ is equal $\tilde{F}_{ij,ik,il, im,jk}$
\end{prop}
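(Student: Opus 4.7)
The plan is to check, cross-ratio by cross-ratio, into which admissibility class each of the five maps $w_I$ (for $I$ a four-element subset of $\{i,j,k,l,m\}$) falls with respect to the stratum $W_{ij,ik,il,im,jk}$, and then to read off $\tilde{F}_\sigma$ directly from Definition \ref{defn11}. By definition of a stratum as the locus where \emph{exactly} the listed Plücker coordinates vanish, the vanishing Plücker coordinates on $W_{ij,ik,il,im,jk}$ are precisely $P_{ij},P_{ik},P_{il},P_{im},P_{jk}$, while $P_{jl},P_{jm},P_{kl},P_{km},P_{lm}$ are all nonzero throughout the stratum.

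The four cross-ratios whose index set contains $i$, namely $w_{i,j,k,l}$, $w_{i,j,k,m}$, $w_{i,j,l,m}$, $w_{i,k,l,m}$, are all non-admissible: in each case every Plücker coordinate appearing in the numerator and in the denominator carries the index $i$, so both numerator and denominator vanish identically on $W_\sigma$, and $w_I$ is of the form $0/0$. For the remaining four-tuple $\{j,k,l,m\}$, I would apply the Plücker identity
$$P_{jl}P_{km}-P_{jm}P_{kl}+P_{jk}P_{lm}=0$$
and use $P_{jk}=0$ to obtain $w_{j,k,l,m}=P_{jl}P_{km}/(P_{jm}P_{kl})=1$; thus $w_{j,k,l,m}$ is weakly admissible with constant value $[1:1]$. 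Feeding $\mathcal{I}_{s,\sigma}=\emptyset$, $\mathcal{I}_{w,\sigma}=\{\{j,k,l,m\}\}$, and the four non-admissible tuples into Definition \ref{defn11} then yields precisely the description claimed: $\tilde{F}_\sigma$ is cut out in $\overline{F}$ by the single equation $w_{j,k,l,m}=[1:1]$, with no further restriction on the other four coordinates beyond the ones already imposed by $\overline{F}$.

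For the second assertion I would redo the same case analysis on the further stratum $W_{ij,ik,il,im,jk,lm}$, where $P_{lm}$ additionally vanishes. The four cross-ratios involving $i$ remain non-admissible for exactly the same reason as before. For $w_{j,k,l,m}$ the Plücker identity still forces the value $1$, because only $P_{jk}=0$ is used in that computation, and the four Plücker coordinates $P_{jl},P_{jm},P_{kl},P_{km}$ appearing in its numerator and denominator are by definition nonzero on this stratum. Hence the triple $(\mathcal{I}_{s,\sigma},\mathcal{I}_{w,\sigma},\mathcal{I}_{n,\sigma})$ is identical for both strata, so the subsets of $\overline{F}$ cut out by Definition \ref{defn11} coincide.

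The only real subtlety is the final check that imposing the extra vanishing $P_{lm}=0$ does not cause any previously weakly admissible cross-ratio to become non-admissible, or any previously strongly admissible one to change class. I would verify this by inspecting that neither $P_{jl}P_{km}$ nor $P_{jm}P_{kl}$ involves the coordinate $P_{lm}$, so the relevant numerator and denominator remain nonzero on the smaller stratum. Everything else is a direct unwinding of Definition \ref{defn11}.
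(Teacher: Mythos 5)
Your proposal is correct and follows exactly the route the paper intends: the paper's own proof is just the one-line remark that the claim is a straightforward corollary of Definition \ref{defn11}, and your case-by-case classification (the four $i$-containing cross-ratios are $0/0$ hence non-admissible, while $w_{j,k,l,m}$ is forced to equal $1$ by the Plücker relation with $P_{jk}=0$, unaffected by the additional vanishing of $P_{lm}$) is precisely the omitted verification. The only cosmetic slip is a sign in the term $P_{jk}P_{lm}$ of the Plücker identity, which is irrelevant here since that term vanishes on both strata.
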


\begin{proof}[Proof]
This proposition is a straghtforward corollary of definition of $\tilde{F}_{\mathcal{I}}$.
\end{proof}

 For $W_{ij,ik,il, im,jl}$ and $W_{ij,ik,il, im,jm}$ other strata the answer is the same, but $w_{j,k,l,m} = [0:1]$ and $w_{j,k,l,m} = [1:0]$. 

\begin{prop}
	For any other stratum $W_{\mathcal{I}}$ the universal space of parameters $\tilde{F}_{\mathcal{I}}$ is equal $\mathcal{F}_{5,2}$.
\end{prop}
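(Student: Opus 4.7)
The plan is to show that for every remaining stratum $W_{\mathcal{I}}$ of $G_{5,2}$—those not covered by any of the previous propositions in this section—every cross-ratio $w_{i,j,k,l}$ is non-admissible. Once this is established, Definition~\ref{defn11} immediately yields
$$\tilde{F}_{\mathcal{I}} = \{x \in \overline{F} \mid \text{no conditions}\} = \overline{F} = \mathcal{F}_{5,2},$$
since with $\mathcal{I}_{s,\sigma} = \mathcal{I}_{w,\sigma} = \emptyset$ the defining conditions on the cross-ratios are vacuous.

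First I would identify the leftover strata. The preceding propositions have enumerated the strata $W_{ij}$, $W_{ij,kl}$, $W_{ij,ik,jk}$, $W_i$, $Y_i$, $W_{ij,kl,km,lm}$, $W_{ij,ik,il,im,jk}$, and $W_{ij,ik,il,im,jk,lm}$, together with the obvious permutation-type siblings, and their corresponding $\tilde{F}$. These exhaust the strata whose admissible polytopes are of dimension 4, of dimension 3, and the particular 2-dimensional (triangular and rectangular) cases. The remaining strata are those for which $|\mathcal{I}|$ is large enough that the admissible polytope has dimension $\leq 2$ and lies in none of the previously covered classes—these include the edges and vertices (fixed points) of $\Delta_{5,2}$, as well as the intermediate configurations with more vanishing Plücker coordinates than the $W_i$, $Y_i$ case but not yet a fixed point.

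The key combinatorial observation is the following: a cross-ratio $w_{i,j,k,l} = P_{ik}P_{jl}/(P_{il}P_{jk})$ is non-admissible on $W_{\mathcal{I}}$ precisely when both its numerator $P_{ik}P_{jl}$ and its denominator $P_{il}P_{jk}$ vanish identically on the stratum, that is, when $\mathcal{I}$ meets both $\{\{i,k\},\{j,l\}\}$ and $\{\{i,l\},\{j,k\}\}$. Hence the proof reduces to checking, for each remaining index set $\mathcal{I}$ and every 4-tuple $\{i,j,k,l\} \subset \{1,\dots,5\}$, that at least one of $\{i,k\},\{j,l\}$ and at least one of $\{i,l\},\{j,k\}$ belongs to $\mathcal{I}$. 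For the fixed-point strata $\mathcal{I}$ consists of all 2-subsets except one (say $\{i,j\}$), in which case any 4-tuple $\{i,j,k,l\}$ has both of $\{i,l\},\{j,k\}$ and both of $\{i,k\},\{j,l\}$ in $\mathcal{I}$ except those containing the exceptional pair—but such 4-tuples still have all four subpairs except $\{i,j\}$ in $\mathcal{I}$. The intermediate cases are handled analogously by direct inspection.

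The main obstacle is bookkeeping: one must ensure the preceding propositions together with the present one cover every stratum of $G_{5,2}$, matching the stratification enumeration of \cite{BT1}. Once the finite list is fixed, each verification is a mechanical inspection of a few 2-subsets, and the conclusion $\tilde{F}_{\mathcal{I}} = \mathcal{F}_{5,2}$ follows uniformly from the definition.
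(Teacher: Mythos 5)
Your proposal is correct and takes essentially the same route as the paper, whose proof is the one-line assertion that the remaining strata admit no admissible cross-ratios, so that Definition~\ref{defn11} imposes no conditions; you usefully make explicit the combinatorial criterion behind that assertion (a cross-ratio is non-admissible iff $\mathcal{I}$ meets both $\{\{i,k\},\{j,l\}\}$ and $\{\{i,l\},\{j,k\}\}$, which holds for every $4$-tuple exactly when the nonvanishing Plücker pairs contain no two disjoint pairs). One small bookkeeping slip: the $2$-dimensional \emph{triangular} polytopes are not in fact covered by the earlier propositions and belong to the ``other'' strata treated here, but your criterion handles them immediately since their nonvanishing pairs either all share a common index or all lie inside a $3$-element set.
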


\begin{proof}[Proof]
	There is no admissible cross-ratio for the other stratum. By definition \ref{defn11}, it holds $\tilde{F}_{\mathcal{I}} = \mathcal{F}_{5,2}$.
\end{proof}

Now we can proof the following theorem.

\begin{thm}
	In the case of $G_{5,2}$ for any stratum $W_{\mathcal{I}}$ the correspnding universal space of parameters $\mathcal{F}_{\mathcal{I}}$ in the sense of definition \ref{defn11} is the same as universal spaces of parameters, which is constructed by Buchstaber and Terzic in \cite{BT1}.
\end{thm}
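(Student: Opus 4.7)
The plan is to verify the identification stratum-by-stratum, using the explicit descriptions of $\tilde{F}_{\mathcal{I}}$ computed in the preceding propositions and comparing them, item by item, with the descriptions in \cite{BT1}. The ambient identification $\mathcal{F}_{5,2} = \overline{F} \cong \overline{M}_{0,5} \cong \cp^2 \sharp 4\overline{\cp}^2$ is already settled, by the Kapranov--Salamon--McDuff theorem combined with \cite{BT1}, so both constructions produce the same compact ambient space and only the individual subsets $\tilde{F}_{\mathcal{I}} \subset \mathcal{F}_{5,2}$ need to be matched. Moreover, the two stratifications of $G_{5,2}$ agree by the proposition of Subsection \ref{sect12}, so there is a canonical bijection between the strata on the two sides and the issue is genuinely only about the shapes of the pieces.

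First I would dispose of the easy cases: the main stratum and the fixed-point strata, where both constructions give $\tilde{F} = F$ and $\tilde{F}_\sigma = \mathcal{F}_{5,2}$ respectively. Then I would walk through the remaining families in increasing $|\mathcal{I}|$, namely the single-Pl\"ucker strata $W_{ij}$, the double strata $W_{ij,kl}$, the triple strata $W_{ij,ik,jk}$, the coordinate-$G_{4,2}$ strata $W_i$, the projective-subspace strata $Y_i$, the codimension-one strata $W_{ij,kl,km,lm}$ in $\Delta_{5,2}$, and the remaining higher-codimension pieces. For each family the preceding proposition gives an explicit parametrization of $\tilde{F}_{\mathcal{I}}$ in the coordinates $(w_{1,2,3,4},\ldots,w_{2,3,4,5}) \in (\cp^1)^5$, and \cite{BT1} gives a parametrization in their own coordinates; the remark already in the text records the match for the $W_{ij}$ case (coincidence with Lemma 25 of \cite{BT1}), and the other families are checked by the same kind of finite comparison, using that the Gelfand--MacPherson configuration of marked points on $\cp^1$ determines both descriptions uniquely up to $PGL(2,\Complex)$.

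The step I expect to be the main obstacle is the triple family $W_{ij,ik,jk}$ and the analogous higher-codimension cases with a single surviving projective parameter, where the Pl\"ucker identity forces two non-admissible cross-ratios to take a common but otherwise arbitrary projective value $[c:c'] \in \cp^1$. In \cite{BT1} the corresponding virtual space of parameters is described through the limiting ``bubble'' in the stable-curve picture, with a free parameter that is a cross-ratio of four marked points on that bubble. To reconcile the two parametrizations one must produce a $PGL(2,\Complex)$-change of coordinates on the surviving $\cp^1$-factor carrying one parameter to the other; since both parameters are cross-ratios of the same quadruple of marked points --- read off either from the surviving Pl\"ucker coordinates of $G_{5,2}$ or from the coordinates on $\overline{M}_{0,5}$ used in \cite{BT1} --- the identification follows from the classical fact that a cross-ratio determines four points on $\cp^1$ up to $PGL(2,\Complex)$. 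All remaining higher-codimension strata either collapse to this case via the inclusion $\tilde{F}_\sigma \subset \tilde{F}_{\sigma_1}$ established in the proof of Theorem \ref{TM}, or are trivial because $\tilde{F}_{\mathcal{I}} = \mathcal{F}_{5,2}$ on both sides.
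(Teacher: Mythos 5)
Your proposal takes essentially the same approach as the paper: the paper's proof is simply a direct comparison of the explicit lists of virtual spaces of parameters computed in the preceding propositions with the corresponding lists in \cite{BT1} (indeed, the paper's own proof consists of a single, visibly truncated sentence saying exactly this). Your write-up is in fact more careful than the paper's, particularly in flagging the need to match the free $[c:c']$ parameter in the $W_{ij,ik,jk}$ family against the bubble cross-ratio in the stable-curve picture via a $PGL(2,\Complex)$ change of coordinates.
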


\begin{proof}[Proof]
	This is just comparison of all lists of universal spaces of parameters above and from \cite{BT1}. But all spaces
\end{proof}

~\\


\begin{thebibliography}
	{1}\bibitem{BT1} Victor M. Buchstaber, Svjetlana Terzic, \emph{Toric topology of the complex Grassmann manifolds}.  arXiv:1802.06449v2. 
	\bibitem{BT2} Victor M. Buchstaber, Svjetlana Terzic, \emph{The foundations of (2n,k)-manifolds}.  arXiv:1803.05766v1.
	\bibitem{BT3} Victor M. Buchstaber, Svjetlana Terzic, \emph{Topology and geometry of the canonical action of T4 on the complex Grassmannian $G_{4,2}$ and the complex projective space $\cp^5$}.  arXiv:1410.2482v3. 
	\bibitem{BP} Victor M. Buchstaber, Taras E.Panov, \emph{Toric topology}.  AMS, Rhode Island, 2015.	
	\bibitem{Kap}  M.Kapranov \emph{Chow quotients of Grassmannian I.}.arXiv:alg-geom/9210002v1.
	\bibitem{MS}  Dusa McDuff, Dietmar Salamon \emph{J-Holomorphic Curves and Symplectic Topology.}AMS, Rhode Island, 2004.
	\bibitem{GM}  I.M.Gelfand, R.W.MacPherson \emph{Geometry in Grassmannians and a generalization of the dilogarithm.}Adv. in Math. 44(1982), 279-312..
	\bibitem{Ke}  S.Keel \emph{Intersection theory of moduli space of stable N-pointed curves of genus zero.}Trans. of AMS, Vol. 330, No.2 (Apr. 1992), pp. 545-574.
	\bibitem{GH} P.Griffiths, J.Harris \emph{Principles of Algebraic Geometry} 1978 John Wiley \& Sons, Inc.
	\bibitem{Fltn} W.Fulton \emph{Young Tableaux
		With Applications to Representation Theory and Geometry} Cambridge: Cambridge University Press, 1977.
	\bibitem{Barlet}D.Barlet, \emph{Espace analytique reduit des cycles analytiques compexes compacts}, in:
	Lecture Notes in Mathematics, 482, p.1-158, Springer-Verlag, 1975
	
	
	
	
\end{thebibliography}
\end{document}